\documentclass[11pt]{article}
\usepackage{amsfonts}
\usepackage[latin9]{inputenc}
\usepackage{amsmath}
\usepackage{amssymb}
\usepackage{breqn}
\usepackage{url}
\usepackage{graphicx}
\usepackage[pdfstartview=FitH]{hyperref}
\usepackage{amsthm}
\usepackage{hyperref}
\usepackage{url}
\usepackage{enumitem}
\usepackage{verbatim}
\usepackage{eufrak}
\usepackage{cleveref}
\usepackage{array}
\usepackage{bm}

\crefformat{section}{\S#2#1#3} % see manual of cleveref, section 8.2.1
\crefformat{subsection}{\S#2#1#3}
\crefformat{subsubsection}{\S#2#1#3}

\usepackage{bbm}
\newcommand{\I}{\operatorname{I}}
\newcommand{\im}{\operatorname{Im}}

\newcommand{\Span}{\operatorname{span}}
\newcommand{\Aut}{\operatorname{Aut}}
\newcommand{\Sym}{\operatorname{Sym}}
\newcommand{\type}{\operatorname{type}}

\newcommand{\SC}{\operatorname{SC}}
\newcommand{\St}{\operatorname{St}}
\newcommand{\EL}{\operatorname{EL}}
\newcommand{\SL}{\operatorname{SL}}

\makeatletter

\begin{document}
\newtheorem{theorem}{Theorem}[section]
\newtheorem*{theorem*}{Theorem}
\newtheorem*{corollary*}{Corollary}
\newtheorem{lemma}[theorem]{Lemma}
\newtheorem{definition}[theorem]{Definition}
\newtheorem{claim}[theorem]{Claim}
\newtheorem{example}[theorem]{Example}
\newtheorem{remark}[theorem]{Remark}
\newtheorem{proposition}[theorem]{Proposition}
\newtheorem{corollary}[theorem]{Corollary}
\newtheorem{observation}[theorem]{Observation}
\newcommand{\subscript}[2]{$#1 _ #2$}
\author{
Tali Kaufman
\footnote{Department of Computer Science, Bar-Ilan University, kaufmant@mit.edu, research supported by ERC and BSF.}
\and
Izhar Oppenheim
\footnote{Department of Mathematics, Ben-Gurion University of the Negev, Be'er Sheva 84105, Israel, izharo@bgu.ac.il, research supported by ISF.}
}

\title{High dimensional expanders and coset geometries}
\maketitle

\begin{abstract}

High dimensional expanders is a vibrant emerging field of study. Nevertheless, the only known construction of bounded degree high dimensional expanders is based on Ramanujan complexes, whereas one dimensional bounded degree expanders are abundant.

In this work, we construct new families of bounded degree high dimensional expanders obeying the local spectral expansion property. This property has a number of important consequences, including geometric overlapping, fast mixing of high dimensional random walks, agreement testing and agreement expansion. Our construction also yields new families of expander graphs which are close to the Ramanujan bound, i.e., their spectral gap is close to optimal. 

The construction is quite elementary and it is presented in a self contained manner; This is in contrary to the highly involved previously known construction of the Ramanujan complexes. The construction is also very symmetric (such symmetry properties are not known for Ramanujan complexes) ; The symmetry of the construction could be used, for example, in order to obtain good symmetric LDPC codes that were previously based on Ramanujan graphs.

The main tool that we use for is the theory of coset geometries.
Coset geometries arose as a tool for studying finite simple groups. Here, we show that coset geometries arise in a very natural manner for groups of elementary matrices over any finitely generated algebra over a commutative unital ring. In other words, we show that such groups act simply transitively on the top dimensional face of a pure, partite, clique complex. 

\end{abstract}

\section{Introduction}

The main motivation for this paper is the construction of new examples of high dimensional expanders (which are simplicial complexes with expansion properties that serve as an analogue of expansion in graphs). 

In \cite{KOStoc}, we gave such a construction in a rather ad-hoc manner. When we axiomatized and generalized the ideas, we understood two things: First, that we essentially rediscovered some basic facts of the well-established theory of coset geometries. Second, that we found new examples of coset geometries for elementary groups and Steinberg groups over rings that were yet unknown and are of independent interest. These new examples include even new coset geometries for well-studied groups - for instance, we show that for $n \geq 2$ there is a pure $n$-dimensional simplicial (clique) complex $X$ such that the group $SL_{n+1} (\mathbb{Z})$ acts simply transitively on top dimensional faces of $X$.    

\subsection{High dimensional expanders}

In recent years there has been a surge of activity regarding the new topic of high dimensional expanders. There is currently no single definition for what a high dimensional expander is: there are several (non equivalent) definitions that match different applications. There are however motivating examples of simplicial complexes that should be high dimensional expanders under any reasonable definition: these are the Ramanujan complexes constructed in \cite{LSV1}, \cite{LSV2} that have many desirable properties. Currently, the only known construction of bounded degree high dimensional expanders, according to either of the potential definitions, is based on Ramanujan complexes, whereas one dimensional bounded degree expanders are abundant.

The construction of Ramanujan complexes is far from being simple in the sense that describing it requires the Bruhat-Tits buildings and the explicit construction in \cite{LSV2} is not straightforward. This highly involved construction makes it less accessible to potential ``end-users'' of high dimensional expanders from various areas of mathematics and computer science.

In this paper, we construct new families of bounded degree high dimensional expanders obeying the (one-sided) local spectral expansion property.  To explain this notion, we recall that given an $n$-dimensional simplicial complex $X$ and a simplex $\tau$ in $X$, the {\em link} of $\tau$ denoted $X_{\tau}$ is the complex obtained by taking all faces in $X$ that contain $\tau$ and removing $\tau$ from them. Thus, if $\tau$ is of dimension $i$ (i.e. $\tau \in X(i)$ ) then $X_{\tau}$ is of dimension $n-i-1$.

For every $-1 \leq i \leq n-2$, the one skeleton of $X_{\tau}$ is a graph with a weight function on the edges (see exact definition in \cref{Construction of HD exp sec.}). Denote the second largest eigenvalue of the random walk on $X_\tau$ with respect to the weight on the edges by $\mu_{\tau}$.

\begin{definition}[one-sided local spectral expander]
A pure $n$-dimensional complex $X$ is a {\em one-sided $\lambda$-local-spectral expander} if for every $-1 \leq i \leq n-2$, and for every $\tau \in X(i)$,  $\mu_{\tau} \leq \lambda$.
\end{definition}

This property is very desirable because it was shown by the second named author in \cite{OppLocII} that for partite complexes, local spectral expansion implies mixing and geometrical overlapping. Moreover, in \cite{KM} it was shown that one-sided local spectral expansion implies fast mixing of high order random walks. In \cite{DK} it was shown that {\em two-sided} local spectral expansion property (which is stronger than the one-sided version) implies {\em optimal} mixing of high order random walks, and this in turn is useful for obtaining PCP agreement tests, direct product testing and direct sum testing based on complexes with the {\em two-sided} local spectral expansion property.
In \cite{KO}, the authors showed that {\em one-sided} local spectral expansion implies optimal mixing of the high dimensional random walk, and this in turn implies agreement expansion using \cite{DK} argument. 

We provide the following elementary construction of one-sided local spectral expanders based on more general ideas explained in \cref{Coset complexes for elementary matrix groups - intro subsec} below.

Recall that given an algebra $R$, the group $EL_{n+1} (R)$ is the group of elementary matrices with coefficients in $R$: for $1 \leq i, j \leq n+1, i \neq j$ and $r \in R$, let $e_{i,j} (r)$ be the $(n+1) \times (n+1)$ matrix with $1$'s along the main diagonal, $r$ in the $(i,j)$ entry and $0$'s in all the other entries. $EL_{n+1} (R)$ are the matrices generated by $\lbrace e_{i,j} (r) : 1 \leq i, j \leq n+1, i \neq j, r \in R \rbrace$. 

Let $n \geq 2$ and let $q$ be a prime power. For $s \in \mathbb{N}, s>n$, define $G^{(s)} = \EL_{n+1} (\mathbb{F}_q / \langle t^s \rangle)$ and let $K_{\lbrace i \rbrace}, i=0,...n$ be the following subgroups of $G^{(s)}$:
$$K_{\lbrace i \rbrace} = \langle e_{j, j+1} (a+bt) : j \in \lbrace 0,...,n\rbrace \setminus \lbrace i \rbrace, a,b \in \mathbb{F}_q \rangle,$$
where $j+1$ is taken modulo $n+1$. 

Define $X^{(s)}$ to be the following simplicial complex:
\begin{itemize}
\item The vertices of $X^{(s)}$ are the cosets $\bigcup_{i=0}^n \lbrace g K_{\lbrace i \rbrace} : g \in G^{(s)} \rbrace$. 
\item Two vertices $g K_{\lbrace i \rbrace}$ and $g' K_{\lbrace i' \rbrace}$ are connected by an edge if $g K_{\lbrace i \rbrace} \cap g' K_{\lbrace i' \rbrace} \neq \emptyset$ and $i \neq i'$.
\item If the vertices $g_0 K_{\lbrace i_0 \rbrace},...,g_k K_{\lbrace i_k \rbrace}$ are pairwise connected by edges, then $\lbrace g_0 K_{\lbrace i_0 \rbrace},...,g_k K_{\lbrace i_k \rbrace} \rbrace$ is a $k$-dimensional simplex in $X^{(s)}$, i.e., $X^{(s)}$ is the clique complex of its one-skeleton.
\end{itemize}

\begin{theorem}
Let $n \geq 2$ and let $q$ be a prime power such that $q > (n-1)^2$. Then for every $s > n$, the following holds for $X^{(s)}$:
\begin{enumerate}
\item $X^{(s)}$ is a pure $n$-dimensional, $(n+1)$-partite, strongly gallery connected clique complex with no free faces.
\item $X^{(s)}$ is finite and the number of vertices of $X^{(s)}$ tends to infinity as $s$ tends to infinity.
\item There is a constant $Q=Q(q)$ such that for every $s$, each vertex of $X^{(s)}$ is contained in exactly $Q$ $n$-dimensional simplices.
\item Each $1$-dimensional link of $X^{(s)}$ has a spectral gap that is greater or equal to $1-\frac{1}{\sqrt{q}}$.
\item $X^{(s)}$ is a $\frac{1}{\sqrt{q}-(n-1)}$-local spectral expander. In particular, the spectral gap of the one-skeleton of $X^{(s)}$ is greater or equal to $1-\frac{1}{\sqrt{q}-(n-1)}$.
\item The automorphisms group of $X^{(s)}$ contains a subgroup that acts transitively on $n$-dimensional simplices and rotates / inverts the types. 
\end{enumerate}
Thus, given any $\lambda >0$, we can take $q$ large enough such that $\lbrace X^{(s)} : s >n \rbrace $ will be an infinite family of $n$-dimensional $\lambda$-local spectral expanders which are uniformly locally finite.
\end{theorem}  

We note that our constructions are very symmetric, i.e., the group of simplicial automorphisms of the complexes we construct is shown to be large  (see Corollary \ref{transitive action coro 1} or Corollary \ref{transitive oriented action coro 1} for exact formulation). In this respect, our construction surpasses the symmetry properties of constructions of Ramanujan complexes (see more on that below). In the two dimensional case, this symmetry is particulary stricking, since the stablizer of any vertex acts transitevely on the link of the vertex. This could be used, for example, to obtain good symmetric LDPC codes (see \cite{KW}) that were previously based on Ramanujan graphs \cite{KaufmanL}. We further note that the $1$-skeletons of local spectral expanders are expander graphs, and therefore, as a byproduct, our construction also yields new families of expander graphs.

%We believe that the new constructions of bounded degree high dimensional expanders that we present in this work could contribute to the general study of the phenomenon of high dimensional expansion. Prior to out work, essentially there was only one known construction of a family of bounded degree high dimensional expanders and that construction was highly non-elementary.  Our work provides many new examples of bounded degree high dimensional expanders. These new constructions could shed new light of the study of what high dimensional expansion should really mean, and how it can be used in various computer science and mathematical applications.

\textbf{Comparison to Ramanujan complexes.} It is interesting to compare our construction to the previously known construction of Ramanujan complexes. Ramanujan complexes exhibit a better ratio between the spectral bound and the the degree bound, but our constructions seem much more symmetric: Ramanujan complex is constructed by choosing a lattice $\Lambda <  PGL_{n+1} (F)$ and passing to the quotient 
$\Lambda \backslash X = \Lambda \backslash PGL_{n+1} (F) /K$ of the Euclidean building. 

We note that $PGL_{n+1} (F)$ can not act on $\Lambda \backslash PGL_{n+1} (F) /K$ and therefore the transitivity properties of this action on $X$ are a-priori lost when passing to the quotient. In \cite{LSV1}, the authors discuss choosing a lattice for which the quotient does admit some transitivity properties, but our construction is still much more symmetric. 

Also, Ramanujan complexes are known to be cosystolic expanders (see definition and proof in \cite{EK}), while this is yet unknown for our constructions. 

\subsection{Coset geometries and coset complexes}

The systematic study of coset geometries and diagram geometries was initiated by Buekenhout \cite{B1} with the main motivation of studying (and classifying) finite simple groups. The idea was to generalize Tits' axioms for buildings in order to associate incidence geometries to simple groups.
%The goal of this study was, in the words of Solomon \cite{Solomon}, to find \textit{``elegant set of axioms, in the spirit of Tits' axioms for a building, defining a class of geometries for all the finite simple groups and perhaps more, but not too much more''}. This approach was indeed very fruitful for studying finite simple groups (see the survey in \cite{BP}), although in the end, as noted in \cite{DiaGeomBook}: \textit{``the classification of finite simple groups has been completed without a satisfactory
%framework offered by diagram geometry''}.

We are interested in the geometric interpretation of coset geometries as simplicial complexes, which we call coset complexes. Below, we will only use the terminology of simplicial complexes and not the terminology of incidence systems that is usually used to describe coset geometries in the standard literature. We note that these two terminologies (simplicial complexes / incidence systems) are equivalent and in Appendix \ref{Incidence systems appen} we provide a dictionary between these terminologies. 

Given a group $G$ with subgroups $K_{\lbrace 0 \rbrace},...,K_{\lbrace n \rbrace}$, consider the graph with vertices $V= \bigcup_{i} \lbrace g K_{\lbrace i \rbrace} : g \in G \rbrace$ and such that there is an edge connecting $ g K_{\lbrace i \rbrace}$ and $g' K_{\lbrace j \rbrace}$ if and only if $i \neq j$ and $g K_{\lbrace i \rbrace} \cap g' K_{\lbrace j \rbrace} \neq \emptyset$ (in the literature on coset geometry, this graph is considered as an incidence system). We define the \textit{coset complex} $X=X(G, (K_{\lbrace i \rbrace})_{i \in \lbrace 0,...,n\rbrace})$ to be the clique complex over this graph, i.e., $\lbrace g_0 K_{\lbrace i_0 \rbrace},...,g_k K_{\lbrace i_k \rbrace} \rbrace$ spans a $k$-simplex in $X$ if and only if for every $0 \leq j,j' \leq k$, $j \neq j'$, there is an edge connecting $g_j K_{\lbrace i_j \rbrace}$ and $g_{j'} K_{\lbrace i_{j'} \rbrace}$. 

With the above definition of $X$, it is not surprising that the geometric properties of $X$ reflect algebraic properties of $G, K_{\lbrace 0 \rbrace},...,K_{\lbrace n \rbrace}$ and vice versa. In this article, we will work with very modest demands on \\ $(G, (K_{\lbrace i \rbrace})_{i \in \lbrace 0,...,n\rbrace})$. Namely, we give three axioms $(\mathcal{A}1)-(\mathcal{A}3)$ on \\ $(G, (K_{\lbrace i \rbrace})_{i \in \lbrace 0,...,n\rbrace})$ and we call $(G, (K_{\lbrace i \rbrace})_{i \in \lbrace 0,...,n\rbrace})$ a \textit{subgroup geometry system} if it satisfies these axioms (see exact details in Definition \ref{SGS def}). 

Using the theory of coset geometry (see Theorem \ref{cost geom thm}) we derive that if $(G, (K_{\lbrace i \rbrace})_{i \in \lbrace 0,...,n\rbrace})$ is a subgroup geometry system, then \\
 $X=X(G, (K_{\lbrace i \rbrace})_{i \in \lbrace 0,...,n\rbrace})$ has several desirable properties:
\begin{theorem}{\cite[Section 1.8]{DiaGeomBook}}
\label{cost geom thm - intro}
Let $n \in \mathbb{N}$ and $G$ be group with subgroups $K_{\lbrace i \rbrace}, i \in \lbrace 0,...,n\rbrace$. If $(G, (K_{\lbrace i \rbrace})_{i \in \lbrace 0,...,n\rbrace})$ is a subgroup geometry system, then $X=X(G, (K_{\lbrace i \rbrace})_{i \in \lbrace 0,...,n\rbrace})$ is a pure $n$-dimensional, $(n+1)$-partite, strongly gallery connected clique complex with no free faces (see \cref{Simplicial complexes subsec} for the terminology regarding simplicial complexes). Furthermore, $G$ acts on $X$ by type preserving automorphisms and this action is transitive on $n$-dimensional simplices of $X$.
\end{theorem}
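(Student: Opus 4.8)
The plan is to reduce every assertion about $X = X(G,(K_{\{i\}})_{i})$ to a combinatorial statement about how the subgroups $K_{\{i\}}$ and their intersections $K_S := \bigcap_{i \in S} K_{\{i\}}$ (with $K_\emptyset := G$) sit inside $G$, since that is exactly what the axioms $(\mathcal{A}1)$--$(\mathcal{A}3)$ of Definition~\ref{SGS def} are designed to govern; note that $X$ is a clique complex by construction, so that property needs no argument. I would begin with the type-preserving action, which is formal: one of the axioms forces $K_{\{i\}} \neq K_{\{j\}}$ for $i \neq j$, and from this a short computation shows that $g K_{\{i\}} = g' K_{\{j\}}$ is impossible when $i \neq j$, so $\type(g K_{\{i\}}) := i$ is a well-defined colouring of the vertices. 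Left multiplication by $G$ permutes the vertices, preserves types, and preserves the incidence relation because that relation is defined $G$-equivariantly; hence $G$ acts on $X$ by type-preserving automorphisms.

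The technical core is a \emph{flag lemma}: vertices $g_i K_{\{i\}}$ ($i \in S$) of pairwise distinct types span a simplex of $X$ if and only if $\bigcap_{i \in S} g_i K_{\{i\}} \neq \emptyset$. The ``if'' direction is immediate; the content is ``only if'', which I would prove by induction on $|S|$, the case $|S| \leq 2$ being the definition of an edge. For the inductive step one translates so that one of the cosets is $K_{\{j\}}$ itself, applies the induction hypothesis to a corank-one subflag to obtain a point lying in all of the cosets but one, and then merges the remaining coset using the intersection-property axiom --- which controls how $K_{\{j\}}$ meets products such as $K_{\{a\}}K_{\{b\}}$ (for instance by forcing $K_{\{a\}}K_{\{b\}} \cap K_{\{j\}} = (K_{\{a\}} \cap K_{\{j\}})(K_{\{b\}} \cap K_{\{j\}})$, or a multi-index refinement of this). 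Making this amalgamation step go through is, I expect, the main obstacle: it is the one place where the argument is not purely formal, and it is precisely the situation the intersection axioms are written to handle.

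Granting the flag lemma, the remaining properties follow quickly. A common point $h \in \bigcap_{i \in S} g_i K_{\{i\}}$ satisfies $g_i K_{\{i\}} = h K_{\{i\}}$ for every $i \in S$, so each simplex of $X$ is a $G$-translate $h \cdot \{K_{\{i\}} : i \in S\}$ of a standard partial simplex; in particular the vertices of any simplex have pairwise distinct types, which is $(n+1)$-partiteness. Since $h \in h K_{\{i\}}$ for all $i$, the standard partial simplex on $S$ extends to the one on $\{0,\dots,n\}$, so every simplex lies in an $n$-simplex and $X$ is pure $n$-dimensional (the standard chamber $\{K_{\{0\}},\dots,K_{\{n\}}\}$ is itself an $n$-simplex because $e \in K_{\{i\}}$ for all $i$); and every $n$-simplex has the form $h \cdot \{K_{\{0\}},\dots,K_{\{n\}}\}$, so $h^{-1}$ carries it to the standard chamber and $G$ is transitive on $n$-simplices. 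For \emph{no free faces} one uses the flag lemma once more to identify the $n$-simplices containing a codimension-one face $h \cdot \{K_{\{i\}} : i \neq j\}$ with the left $K_{\{j\}}$-cosets contained in $\bigcap_{i \neq j} K_{\{i\}} K_{\{j\}}$; the relevant axiom makes this union strictly larger than $K_{\{j\}}$, so there are at least two such chambers, and since $X$ is pure it follows that every face of smaller dimension also lies in at least two $n$-simplices (choose a missing type and pass to a codimension-one face containing the given face).

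Finally, for \emph{strong gallery connectedness} I would induct on $n$, the key input being that the link of a simplex $h \cdot \{K_{\{i\}} : i \in S\}$ is isomorphic to the coset complex $X(K_S, (K_{S \cup \{j\}})_{j \notin S})$, which one checks is again the complex of a subgroup geometry system. By the inductive hypothesis every such link is gallery connected, so any two $n$-simplices sharing a vertex are joined by a gallery lying in the star of that vertex; and using the generation axiom $G = \langle K_{\{i\}} : i \in \{0,\dots,n\} \rangle$ to write an arbitrary element as a product of elements of the various $K_{\{i\}}$, one chains such local galleries together (each factor taken from $K_{\{i\}}$ keeps one inside the star of the vertex $K_{\{i\}}$) to connect the standard chamber to any other, so the dual graph of $X$ is connected. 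Iterating the same reasoning inside every residue yields strong gallery connectedness. The one genuinely delicate bookkeeping point in this last step is verifying that the axioms $(\mathcal{A}1)$--$(\mathcal{A}3)$ are inherited by the residues $X(K_S, (K_{S \cup \{j\}})_{j \notin S})$, which is what makes the induction legitimate.
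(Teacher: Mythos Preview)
The paper does not actually give a proof of this theorem: both here and at the restatement (Theorem~\ref{cost geom thm}) it simply cites \cite[Section~1.8]{DiaGeomBook}. So there is nothing to compare your argument to at the level of the paper itself; what you have written is, in outline, the standard proof one finds in that reference, and it is essentially correct.

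Two comments on the details. First, your ``flag lemma'' is indeed the heart of the matter, and the inductive step goes through cleanly once you use $(\mathcal{A}2)$ in the form it is actually stated: after translating so the corank-one subflag is $\{K_{\{i\}} : i \in S'\}$, pairwise incidence gives $g_j \in \bigcap_{i \in S'} K_{\{i\}} K_{\{j\}}$, and $(\mathcal{A}2)$ with $\tau = S'$ identifies this intersection with $K_{S'} K_{\{j\}}$, producing the common point. The variant you wrote (``$K_{\{a\}}K_{\{b\}} \cap K_{\{j\}} = (K_{\{a\}} \cap K_{\{j\}})(K_{\{b\}} \cap K_{\{j\}})$'') is not the shape of the axiom and would make the step harder than it is. Second, gallery connectedness admits a more direct argument than your induction through links: iterated use of $(\mathcal{A}1)$ gives $G = \langle K_{\I \setminus \{i\}} : i \in \I \rangle$, and any $k \in K_{\I \setminus \{i\}}$ fixes the codimension-one face $\{K_{\{j\}} : j \neq i\}$ of the standard chamber, so writing $g$ as a word in the $K_{\I \setminus \{i\}}$ immediately yields a gallery from the standard chamber to $g \cdot C$. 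Strong gallery connectedness then follows by applying the same argument inside each $K_\tau$ (which is generated by its $K_{(\I \setminus \tau) \setminus \{i\}}$'s, again by $(\mathcal{A}1)$), which sidesteps the need to verify that residues inherit all three axioms.
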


We further explore the connections between $X$ and the subgroup structure of $G$ in several directions that, as far as we know, were not addressed in the study of coset geometries:
\begin{enumerate}
\item We show if there is a group $\Gamma < \Aut (G)$ that preserve the subgroups $K_{\lbrace 0 \rbrace},...,K_{\lbrace n \rbrace}$, then $\Gamma$ acts on $X$ simplicially (see exact formulation in \cref{Symmetries of the constructions sect}).
\item We observe that for $N \triangleleft G$, if $N \cap K_{\lbrace i \rbrace}$ is trivial for every $i$, then $G / N$ also have a subgroup geometry system $(G/N, (K_{\lbrace i \rbrace} N / N)_{i \in \lbrace 0,...,n\rbrace})$ and $X$ is a covering of the $X(G/N, (K_{\lbrace i \rbrace} N / N)_{i \in \lbrace 0,...,n\rbrace}))$ (see exact formulation if \cref{Passing to quotients subsect}).
\item We show that in cases where for every $0 \leq i,j \leq n$, $i \neq j$, the subgroups $K_{\lbrace 0,...,n\rbrace \setminus \lbrace i,j \rbrace}$ are finite, the spectrum of the $1$-dimensional links of $X$ can be bounded using the representation theory of these groups (see exact formulation in Appendix \ref{spectral gap appen}).
\end{enumerate}

\begin{remark}
In \cite{LLR}, Lubotzky, Luria and Rosenthal constructed what they called Aboreal complexes. Although their terminology differs from ours, their construction can be defined using coset complexes as follows: for $l \in \mathbb{N}$, $l \geq 2$, let $C_l$ be the cyclic group with $l$ elements. Let $G_{n,l}$ be the free product of $n+1$ copies of $C_l$, i.e., let $C_l^i$ be a cyclic group indexed by $i =0,...,n$ and $G_{n,l} = \ast_{i=0}^n C_l^i$. For $i \in \lbrace 0,...,n \rbrace$, take $K_{\lbrace i \rbrace} = \ast_{j \neq i} C_l^j$. Then $(G_{n,l}, ( K_{\lbrace i \rbrace}  )_{i \in \lbrace 0,...,n \rbrace} )$ is a subgroup geometry system and $X(G_{n,l}, ( K_{\lbrace i \rbrace}  )_{i \in \lbrace 0,...,n \rbrace} )$ is the Aboreal complex denoted $T_{n,l}$ in \cite{LLR}.
\end{remark}

\subsection{Coset complexes for elementary matrix groups}
\label{Coset complexes for elementary matrix groups - intro subsec}
Let $R$ be a unital commutative ring and $\mathcal{R}$ be a finitely generated $R$-algebra. Let $\lbrace t_1,...,t_l\rbrace$ be a generating set of $\mathcal{R}$ and let $T$ be the $R$-module generated by $\lbrace 1, t_1,...,t_l\rbrace$, i.e., $T= \lbrace r_0+ \sum_i r_i t_i : r_i \in R \rbrace$. 

For $n \in \mathbb{N}, n \geq 1$, the \textit{group of elementary $(n+1) \times (n+1)$ matrices}, denoted $\EL_{n+1} (\mathcal{R})$, is the group $\EL_{n+1} (\mathcal{R}) = \langle e_{i,j} (r) : 0 \leq i, j \leq n, i \neq j, r \in \mathcal{R} \rangle$, where $e_{i,j} (r)$ is the  matrix with $1$'s along the main diagonal, $r$ in the $(i,j)$ entry and $0$'s in all the other entries. Note that we index the rows and columns of the matrices by $0,...,n$. 

For this group, we define subgroups $K_{\lbrace i \rbrace} < \EL_{n+1} (\mathcal{R})$, where $i \in \lbrace 0,...,n\rbrace$ by
$$K_{\lbrace i \rbrace} = \langle e_{j, j+1} (m) : j \in \lbrace 0,...,n\rbrace \setminus \lbrace i \rbrace, m  \in T \rangle,$$
where $j+1$ is taken modulo $n+1$, i.e., for $j=n$, $e_{n,n+1} (m) = e_{n,0} (m)$. 

\begin{theorem}
\label{geometry system for elementary mat groups thm - intro}
For any choice of $R,\mathcal{R},T$ as above, \\ $(\EL_{n+1} (\mathcal{R}), (K_{\lbrace i \rbrace})_{i \in \lbrace 0,...,n\rbrace})$ is a subgroup geometry system and therefore the coset complex $X(\EL_{n+1} (\mathcal{R}), (K_{\lbrace i \rbrace})_{i \in \lbrace 0,...,n\rbrace})$ has the properties specified in Theorem \ref{cost geom thm - intro} above.

Furthermore, the dihedral group $D_{n+1}$ acts by automorphisms on \\ $\EL_{n+1} (\mathcal{R})$ and preserve the subgroups $K_{\lbrace 0 \rbrace},...,K_{\lbrace n \rbrace}$. Therefore $D_{n+1}$ also acts simplicially on $X(\EL_{n+1} (\mathcal{R}), (K_{\lbrace i \rbrace})_{i \in \lbrace 0,...,n\rbrace})$ (by type rotating and type inverting automorphisms). 
\end{theorem}

We also show that all of the above also applies to the Steinberg group $\St_{n+1} (\mathcal{R})$ and that $X(\St_{n+1} (\mathcal{R}), (K_{\lbrace i \rbrace})_{i \in \lbrace 0,...,n\rbrace})$ is a covering of \\
$X(\EL_{n+1} (\mathcal{R}), (K_{\lbrace i \rbrace})_{i \in \lbrace 0,...,n\rbrace})$.

Above, we stated that this general construction yields new examples of high dimensional expanders. Here we will like to outline another example which seems fundamental: let $n \geq 2$ and $R = \mathcal{R} = \mathbb{Z}$ with the generating set $\lbrace 1 \rbrace$. In this case $T = \mathbb{Z}$ and $\EL_{n+1} (\mathbb{Z}) = \SL_{n+1} (\mathbb{Z})$ (the equality $\EL_{n+1} (R) = \SL_{n+1} (R)$ is known when $R$ is a commutative Euclidean ring and in some other cases - see \cite[4.3B]{HOMBook}). With the subgroups $K_{\lbrace i \rbrace}$ as above, $X(\SL_{n+1} (\mathbb{Z}), (K_{\lbrace i \rbrace})_{i \in \lbrace 0,...,n\rbrace})$ is a locally infinite simplicial complex on which $ \SL_{n+1} (\mathbb{Z})$ acts simplicially and the action is simply transitive on $n$-dimensional simplices.

\subsection{Organization} The paper is organized as follows. In \cref{General construction sect}, we explain the basic construction of coset complexes, based on the general theory of coset geometries and then further explore this construction in two directions: passing to quotients and expanding the automorphism group of coset complexes. In \cref{Elementary matrices construction sect}, we show how to construct coset complexes based on elementary matrix groups and Steinberg groups over ring algebras and explore the symmetries of these constructions. In \cref{Construction of HD exp sec.}, we show how to use the construction of coset complexes based on based on elementary matrix groups to produce new examples of local spectral high dimensional expanders. In order to ease the reading several technical discussions were added as appendices. In \cref{Incidence systems appen}, we provide a dictionary between the terminology of coset complexes that we use and the (equivalent) terminology of coset geometries that is used in some of the literature. In \cref{spectral gap appen}, we show how to use representation theory to bound the spectrum of $1$-dimensional links in coset complexes (this result was known to experts, but we could not find a good reference for it).

\section{Coset geometries and simplicial complexes}
\label{General construction sect}

In this section, we will explain how coset geometries give rise to simplicial complexes. 

The literature on coset geometries usually uses the terminology of incidence geometry and not the terminology of simplicial complexes (although the two terminologies are equivalent in the case of partite complexes). We will use only the terminology of simplicial complexes (even when referring to literature that uses incidence geometry terminology) and Appendix \ref{Incidence systems appen} provides a dictionary between the two terminologies.

%In this section, we will use ideas taken from the theory of complexes of groups (see Bridson and Haefliger's book \cite{BridsonH}[Part 2, Chapter 12 and Appendix C]) in order to describe a general construction of complexes that arise from groups with a specific subgroup structure (see below).
%Our construction can be seen as a special case of (developable) simple complexes of groups. More general cases are explained in \cite{BridsonH}, but our emphsis is different than the one in \cite{BridsonH}: in \cite{BridsonH} the main question that is studied is given a complex of groups is it developable and what is the fundamental group of it (if it exists). Our point of view is opposite: we start with a group $G$ and describe a subgroup structure that gives rise to a simplicial complex on which $G$ acts simplicially. This type of construction can be seen as a special case of previous constructions (see \cref{Connections with previous constructions subsec}).

\subsection{Simplicial complexes}
\label{Simplicial complexes subsec}
Let $X$ be an $n$-dimensional simplicial complex over a vertex set $V$. We set some notation and terminology. 

For every $-1 \leq k \leq n$, we denote by $X(k)$ the set of $k$-dimensional simplices in $X$, i.e., $\sigma \in X(k)$ is a set of vertices of $X$ of cardinality $k+1$ that form a $k$-dimensional simplex in $X$ (we use the convention $X(-1) =  \lbrace \emptyset \rbrace$). Note that $X(0)$ is not exactly $V$: $v \in V \Leftrightarrow \lbrace v \rbrace \in X(0)$. 

$X$ is called \textit{pure $n$-dimensional} if every simplex of $X$ is contained in an $n$-dimensional simplex. The simplicial complex $X$ is called \textit{$(n+1)$-partite}, if there are disjoint sets $S_0,...,S_n \subset V$, called \textit{the sides of $X$}, such that for  $V = \bigcup S_i$ and for every $\sigma \in X(n)$ and every $0 \leq i \leq n$, $\vert \sigma \cap V_i \vert = 1$, i.e., every $n$-dimensional simplex has exactly one vertex in each of the sides of $X$.  In a pure $n$-dimensional, $(n+1)$-partite complex $X$, each simplex $\sigma \in X(k)$ has a \textit{type} which is a subset of $\lbrace 0,..., n\rbrace$ of cardinality $k+1$ that is defined by $\type (\sigma) = \lbrace i : \sigma \cap S_i \neq \emptyset \rbrace$. A pure $n$-dimensional simplicial complex $X$ is said to have a \textit{free face}, if there is $\tau \in X(n-1)$ such that there is a unique $\sigma \in X(n)$ such that $\tau \subseteq \sigma$.Thus, $X$ has no free faces if every simplex of dimension strictly less than $n$ is contained in at least two $n$-dimensional simplices.

The \textit{$1$-skeleton} of $X$ is the graph with vertices $X$ and edges $X(1)$. A simplicial complex $X$ is said to be \textit{connected} if the $1$-skeleton of $X$ is connected as a graph. A simplicial complex $X$ is called a \textit{clique complex} (or a \textit{flag complex}) if every clique in the $1$-skeleton of $X$ spans a simplex, i.e., if $v_0,...,v_k \in V$ such that for every $0 \leq i < j \leq k$, $\lbrace v_i, v_j \rbrace \in X(1)$, then $\lbrace v_0,...,v_k \rbrace \in X(k)$.

Given a simplex $\tau \in X(k)$, the \textit{link of $\tau$} is the subcomplex of $X$, denoted by $X_\tau$, and defined by
$$X_\tau = \lbrace \eta \in X : \tau \cap \eta = \emptyset, \tau \cup \eta \in X \rbrace.$$
We note that if $X$ is pure $n$-dimensional and $(n+1)$-partite, then for every $-1 \leq k \leq n$ and every $\tau \in X(k)$, $X_\tau$ is pure $(n-k-1)$-dimensional and $(n-k)$-partite. A simplicial complex $X$ is called \textit{locally finite} if for every $\tau \in X$, $\tau \neq \emptyset$, $X_\tau$ is finite, i.e., $\vert X_\tau (0) \vert < \infty$.

A \textit{gallery} in $X$ is a sequence of top dimensional simplices $\sigma_1,...,\sigma_l \in X(n)$ such that for every $i$, $\sigma_{i} \cap \sigma_{i+1} \in X (n-1)$.  A simplicial complex $X$ is called \textit{gallery connected} if for every $\sigma, \sigma' \in X(n)$ there is a gallery connecting $\sigma$ and $\sigma '$, i.e., there are $\sigma_0,...,\sigma_l \in X(n)$ such that $\sigma_0 = \tau, \sigma_l = \tau'$ and for every $i$, $\sigma_{i} \cap \sigma_{i+1} \in X (n-1)$. A simplicial complex $X$ will be called \textit{strongly gallery connected} if it is gallery connected and for every $0 \leq k \leq n-2$, $\tau \in X(k)$, $X_\tau$ is gallery connected (as a complex of dimension $n-k-1$).

\begin{remark}
\label{strongly gallery connected to locally connected rmk}
A simple exercise shows that $X$ is \textit{strongly gallery connected}  if and only if it is connected (i.e., the $1$-skeleton of $X$ is connected) and for every $0 \leq k \leq n-2$, $\tau \in X(k)$, $X_\tau$ is connected (i.e., the $1$-skeleton of $X_\tau$ is connected). 
\end{remark}

For a pure $n$-dimensional complex $X$, a simplex $\tau$ of dimension $<n$ is said to be a \textit{free face} of $X$, if it is contained in exactly one $n$-dimensional simplex of $X$. 
%In a gallery connected complex $X$, the \textit{gallery distance} between $\sigma, \sigma '$ is the length of shortest gallery connecting $\sigma$ and $\sigma '$. 

\subsection{Coset complexes}

Below, we state known results regarding coset geometries in the language of simplicial complexes. We note that in all the references given below, the results are stated in the terminology of incidence geometries and refer the reader to Appendix \ref{Incidence systems appen} for a dictionary between these terminologies.

Let $G$ be a group with subgroups $K_{\lbrace i \rbrace}, i \in \I$, where $\I$ is a finite set. The \textit{coset complex} $X=X(G, (K_{\lbrace i \rbrace})_{i \in \I})$ is a simplicial complex defined as follows: 
\begin{enumerate}
\item The vertex set of $X$ is composed of disjoint sets $S_i = \lbrace g K_{\lbrace i \rbrace} : g \in G \rbrace$.
\item For two vertices $g K_{\lbrace i \rbrace}, g' K_{\lbrace j \rbrace}$ where $i,j \in \I, g,g' \in G$, $\lbrace g K_{\lbrace i \rbrace}, g' K_{\lbrace j \rbrace} \rbrace \in X(1)$ if $i \neq j$ and  $g K_{\lbrace i \rbrace} \cap g' K_{\lbrace j \rbrace} \neq \emptyset$.
\item The simplicial complex $X$ is the clique complex spanned by the $1$-skeleton defined above, i.e., $\lbrace g_0 K_{\lbrace i_0 \rbrace},..., g_k  K_{\lbrace i_k \rbrace} \rbrace \in X(k)$ if for every $0 \leq j,j' \leq k$, $g_j K_{\lbrace i_j \rbrace} \cap g_{j'} K_{\lbrace i_{j'} \rbrace} \neq \emptyset$.
\end{enumerate}
With the above definition, $X(G, (K_{\lbrace i \rbrace})_{i \in \I})$ is clearly a $\vert \I \vert$-partite clique complex. 

We define an action of $G$ on the vertices of $X(G, (K_{\lbrace i \rbrace})_{i \in \I})$: for every $g, g' \in G$, $i \in \I$, $g. (g' K_{\lbrace i \rbrace}) = g g' K_{\lbrace i \rbrace}$. It is not hard to verify that this action is simplicial, i.e., for every $\lbrace g_0 K_{\lbrace i_0 \rbrace},..., g_k  K_{\lbrace i_0 \rbrace} \rbrace \in X(G, (K_{\lbrace i \rbrace})_{i \in \I}) (k)$ and every $g \in G$,
$$g. \lbrace g_0 K_{\lbrace i_0 \rbrace},..., g_k  K_{\lbrace i_0 \rbrace} \rbrace = \lbrace g g_0 K_{\lbrace i_0 \rbrace},..., g g_k  K_{\lbrace i_0 \rbrace} \rbrace \in X(G, (K_{\lbrace i \rbrace})_{i \in \I}) (k).$$
We also note that this action is type preserving, i.e., for every simplex $\sigma$ in $X(G, (K_{\lbrace i \rbrace})_{i \in \I})$ and every $g \in G$, $\type (\sigma) = \type (g.\sigma)$ 

For a group $G$ and subgroups $K_{\lbrace i \rbrace}, i \in \I$, we denote for every $\tau \subseteq \I, \tau \neq \emptyset$, $K_\tau = \bigcap_{i \in \tau} K_{\lbrace i \rbrace}$ and further denote $K_{\emptyset} = G$. 
\begin{definition}
\label{SGS def}
Let $n \in \mathbb{N}$, $\I$ a finite set of cardinality $n+1$ and $G$ be group with subgroups $K_{\lbrace i \rbrace}, i \in \I$. We call $(G, (K_{\lbrace i \rbrace})_{i \in \I})$ a subgroup geometry system if:
\begin{enumerate}[label=($\mathcal{A}${{\arabic*}})]
\item For every $\tau, \tau' \subseteq \I$, $K_{\tau \cap \tau '} = \langle K_{\tau},  K_{\tau '} \rangle$, where $\langle K_\tau, K_\tau' \rangle$ denotes the subgroup of $G$ generated by $K_\tau$ and $K_{\tau'}$.
\item For every $\tau \subsetneqq \I$ and $i \in \I \setminus \tau$, $K_{\tau} K_{\lbrace i \rbrace} = \bigcap_{j \in \tau} K_{\lbrace j \rbrace} K_{\lbrace i \rbrace}$.
\item For every $i \in \I$, $K_\I \neq K_{\I \setminus \lbrace i \rbrace}$.
\end{enumerate}
\end{definition}

\begin{remark}
The reason for the name subgroup geometry system is that in the terminology of incidence systems, the coset incidence system that fulfills conditions $(\mathcal{A} 1)- (\mathcal{A} 3)$ is an $\I$-geometry (see \cite{DiaGeomBook}).
\end{remark}

After this definition, we state the following theorem from the introduction (Theorem \ref{cost geom thm - intro}):
\begin{theorem}
\label{cost geom thm}
Let $n \in \mathbb{N}$, $\I$ a finite set of cardinality $n+1$ and $G$ be group with subgroups $K_{\lbrace i \rbrace}, i \in \I$. If $(G, (K_{\lbrace i \rbrace})_{i \in \I})$ is a subgroup geometry system, then $X=X(G, (K_{\lbrace i \rbrace})_{i \in \I})$ is a pure $n$-dimensional, $(n+1)$-partite, strongly gallery connected clique complex with no free faces. Furthermore, $G$ acts on $X$ by type preserving automorphisms and this action is transitive on $n$-dimensional simplices of $X$, i.e., for every $\sigma, \sigma ' \in X(n)$ there is $g \in G$ such that $g. \sigma = \sigma '$. 
\end{theorem}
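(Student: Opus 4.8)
The plan is to verify each property of $X=X(G,(K_{\lbrace i \rbrace})_{i\in\I})$ separately, exploiting the combinatorial identities $(\mathcal{A}1)$–$(\mathcal{A}3)$, with the main leverage coming from the observation that a collection of cosets $\lbrace g_0 K_{\lbrace i_0 \rbrace},\dots,g_k K_{\lbrace i_k \rbrace}\rbrace$ with distinct types has a common intersection point if and only if $g_{i_j}^{-1}g_{i_{j'}}\in K_{\lbrace i_j \rbrace}K_{\lbrace i_{j'} \rbrace}$ for all pairs, and that this in turn — via $(\mathcal{A}2)$ — is equivalent to the cosets $g_0K_{\lbrace i_0 \rbrace},\dots$ all containing a single common element. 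First I would record the normalization step: since $G$ acts transitively-by-left-multiplication and type-preservingly on vertices of each type, every simplex is in the $G$-orbit of one containing the vertex $K_{\lbrace i_0 \rbrace}$ (i.e.\ the coset of the identity), so it suffices to analyze simplices of the form $\lbrace K_{\lbrace i_0 \rbrace}, g_1 K_{\lbrace i_1 \rbrace},\dots\rbrace$, and after translating again by an element of $K_{\lbrace i_0 \rbrace}$ one may reduce pairwise conditions to membership statements in products of the $K_{\lbrace i \rbrace}$.

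The heart of the argument is the following claim, proved by induction on $k$: a set $\lbrace K_\tau\text{-coset representatives}\rbrace$ — more precisely, elements $g_0,\dots,g_k\in G$ — determine a $k$-simplex in $X$ (pairwise intersection) if and only if $\bigcap_{j=0}^k g_jK_{\lbrace i_j \rbrace}\neq\emptyset$, and in that case, after a global left translation one may take $g_0=\dots=g_k=e$, exhibiting the "standard" simplex $\lbrace K_{\lbrace i_0 \rbrace},\dots,K_{\lbrace i_k \rbrace}\rbrace$. For the inductive step one assumes $g_0K_{\lbrace i_0 \rbrace}\cap\dots\cap g_{k-1}K_{\lbrace i_{k-1} \rbrace}\ni h$; replacing all $g_j$ by $h^{-1}g_j$ we may take $g_0,\dots,g_{k-1}\in\bigcap_{j<k}K_{\lbrace i_j \rbrace}=K_{\lbrace i_0,\dots,i_{k-1}\rbrace}$; then the pairwise conditions $g_jK_{\lbrace i_j \rbrace}\cap g_kK_{\lbrace i_k \rbrace}\neq\emptyset$ say $g_j^{-1}g_k\in K_{\lbrace i_j \rbrace}K_{\lbrace i_k \rbrace}$, hence $g_k\in g_jK_{\lbrace i_j \rbrace}K_{\lbrace i_k \rbrace}$, and intersecting over $j<k$ and using that $g_j\in K_{\lbrace i_0,\dots,i_{k-1}\rbrace}\subseteq K_{\lbrace i_j \rbrace}$ we land exactly in $\bigl(\bigcap_{j<k}K_{\lbrace i_j \rbrace}K_{\lbrace i_k \rbrace}\bigr)$, which by $(\mathcal{A}2)$ equals $K_{\lbrace i_0,\dots,i_{k-1}\rbrace}K_{\lbrace i_k \rbrace}$ — and this is precisely the statement that $g_k K_{\lbrace i_k \rbrace}$ meets $K_{\lbrace i_0,\dots,i_{k-1}\rbrace}\supseteq$ the common point. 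I expect this inductive verification, and in particular getting the quantifier order right in the "$\bigcap_j (K_{\lbrace i_j \rbrace}K_{\lbrace i_k \rbrace})$ versus $(\bigcap_j K_{\lbrace i_j \rbrace})K_{\lbrace i_k \rbrace}$" step, to be the main obstacle; it is exactly where axiom $(\mathcal{A}2)$ is used and where a naive argument would fail.

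Granting the claim, the remaining properties follow in short order. \emph{Purity and $(n+1)$-partiteness:} partiteness is immediate from the construction, while purity follows because the standard simplex $\lbrace K_{\lbrace i_0 \rbrace},\dots,K_{\lbrace i_k \rbrace}\rbrace$ always extends to $\lbrace K_{\lbrace 0 \rbrace},\dots,K_{\lbrace n \rbrace}\rbrace$ (all pairwise intersections contain $e$), and every simplex is a $G$-translate of a standard one. \emph{Transitivity on $X(n)$:} every $n$-simplex is a $G$-translate of $\lbrace K_{\lbrace 0 \rbrace},\dots,K_{\lbrace n \rbrace}\rbrace$ by the claim with $k=n$, so $G$ acts transitively on $X(n)$; the action is simplicial and type-preserving as already noted in the text. \emph{No free faces:} a codimension-$1$ face $\tau$ of type $\I\setminus\lbrace i \rbrace$ in the standard $n$-simplex is contained in the $n$-simplices $\lbrace\text{standard}\rbrace$ and also (translating by any $g\in K_{\I\setminus\lbrace i\rbrace}$ not fixing $gK_{\lbrace i \rbrace}$) in $g\cdot\lbrace\text{standard}\rbrace$; the two are distinct exactly when $K_{\I\setminus\lbrace i \rbrace}\neq K_\I$, which is axiom $(\mathcal{A}3)$ — so every codimension-$1$ face lies in at least two $n$-simplices, and since a free face must have a codimension-$1$ face that is itself free, there are no free faces. \emph{Clique complex:} true by definition of $X$. \emph{Strong gallery connectedness:} by Remark~\ref{strongly gallery connected to locally connected rmk} it suffices to show $X$ is connected and every proper link is connected; the link of the standard simplex of type $\tau$ is identified (via the claim) with the coset complex $X(K_\tau,(K_{\tau\cup\lbrace i \rbrace})_{i\notin\tau})$, whose one-skeleton connectivity reduces to the statement that $K_\tau$ is generated by the $K_{\tau\cup\lbrace i \rbrace}$, $i\notin\tau$ — and this is a repeated application of $(\mathcal{A}1)$ (since $\tau=\bigcap_{i\notin\tau}(\tau\cup\lbrace i\rbrace)$ when $|\I\setminus\tau|\ge 2$, so $K_\tau=\langle K_{\tau\cup\lbrace i \rbrace}: i\notin\tau\rangle$). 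A path in the one-skeleton connecting two cosets of the identity then translates, by the $G$-action, into a gallery, giving gallery-connectedness and likewise in every link.
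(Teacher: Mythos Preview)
The paper does not actually give its own proof: it simply cites \cite[Section 1.8]{DiaGeomBook}. Your proposal, by contrast, supplies a complete self-contained argument. That argument is essentially correct and is the standard one --- the key point is exactly your inductive claim that pairwise intersection of cosets of distinct types forces a common intersection point, with $(\mathcal{A}2)$ doing the work at the inductive step; the remaining properties (purity, transitivity, absence of free faces via $(\mathcal{A}3)$, link identification and connectivity via $(\mathcal{A}1)$) then follow as you describe.

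Two small points worth tightening. First, in the inductive step your phrase ``we may take $g_0,\dots,g_{k-1}\in\bigcap_{j<k}K_{\{i_j\}}$'' is not literally what the translation gives you: translating by $h^{-1}$ yields $h^{-1}g_j\in K_{\{i_j\}}$ for each $j<k$ separately, not in the intersection. The clean fix is simply to take $g_0=\dots=g_{k-1}=e$ (since $h^{-1}g_jK_{\{i_j\}}=K_{\{i_j\}}$), after which your computation $g_k\in\bigcap_{j<k}K_{\{i_j\}}K_{\{i_k\}}=K_{\{i_0,\dots,i_{k-1}\}}K_{\{i_k\}}$ goes through verbatim. Second, your final sentence about paths in the one-skeleton ``translating into a gallery'' is superfluous and a bit misleading: once you have invoked Remark~\ref{strongly gallery connected to locally connected rmk}, you only need connectivity of $X$ and of each proper link, both of which you have already established from $(\mathcal{A}1)$; no separate gallery argument is required.
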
 

\begin{proof}
The proof of this Theorem is the consequence of \cite[Section 1.8]{DiaGeomBook} (see page 36 and the discussion that precedes it). 
\end{proof}

\begin{remark}
The other direction for this Theorem is also true: let $X$ be an $(n+1)$-partite, strongly gallery connected clique complex with no free faces and  $G$ be a group that acts on $X$ by type preserving automorphisms such that the action is transitive on $n$-dimensional simplices of $X$. Fix $I= \lbrace v_0,...,v_n \rbrace \in X(n)$ and for $v_i \in I$, denote $K_{\lbrace i \rbrace} = \lbrace g \in G : g. v_i = v_i \rbrace$. Fix as above $K_\emptyset = G$ and for every $\tau \subsetneqq I$, fix $K_\tau = \bigcap_{i \in \tau} K_{\lbrace i \rbrace}$. We observe that for every $\tau \subseteq I$, $K_{\tau} = \lbrace g \in G : g. \tau = \tau \rbrace$. Then with these notations, $(G, (K_{\lbrace i \rbrace})_{i \in \I})$ fulfill conditions $(\mathcal{A} 1)- (\mathcal{A} 3)$ and in that case $X$ is isomorphic to $X(G, (K_{\lbrace i \rbrace})_{i \in \I})$.
The proof of these facts is left for the reader and we will not make any use of these facts in the sequel.
\end{remark}

We note that for a group $G$ with subgroups $K_{\lbrace i \rbrace}, i \in \I$, such that $(G, (K_{\lbrace i \rbrace})_{i \in \I})$ is a subgroup geometry system, the complex $X(G, (K_{\lbrace i \rbrace})_{i \in \I})$ can be given a different description using cosets of the form $g K_{\tau}$:  
\begin{proposition}
\label{different descrip prop}
Let $n \in \mathbb{N}$, $\I$ a set of cardinality $n+1$ and $(G, (K_{\lbrace i \rbrace})_{i \in \I})$ a subgroup geometry system. Denote $X = X(G, (K_{\lbrace i \rbrace})_{i \in \I})$. 
For every $-1 \leq k \leq n$, denote $\I (k) = \lbrace \tau \subseteq \I : \vert \tau \vert = k+1 \rbrace$ and define the function
$$f: X (k) \rightarrow \lbrace  g K_\tau : g \in G, \tau \in \I (k) \rbrace,$$
as
$$f(\lbrace g K_{\lbrace i \rbrace} : i \in \tau \rbrace) =  g K_\tau.$$
Then $f$ is well defined, bijective and equivariant under the action of $G$, i.e., for every $g' \in G$
$$f(g' . \lbrace g K_{\lbrace i \rbrace} : i \in \tau \rbrace ) =  g' g K_\tau.$$
Furthermore, for every $\tau, \tau' \in \bigcup_{-1 \leq k \leq n} \I (k)$ and every $g, g' \in G$, $f^{-1} (g K_\tau) \subseteq f^{-1} (g' K_{\tau '})$ if and only if $\tau \subseteq \tau'$ and $(g')^{-1} g \in K_\tau$.
\end{proposition}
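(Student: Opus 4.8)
The plan is to build everything on one structural lemma: \emph{every $k$-simplex of $X$ admits a uniform representative}, i.e.\ if $\sigma = \{g_0 K_{\{i_0\}},\ldots,g_k K_{\{i_k\}}\} \in X(k)$ then there is a single $g \in G$ with $g K_{\{i_j\}} = g_j K_{\{i_j\}}$ for every $j$, so that $\sigma = \{g K_{\{i\}} : i \in \tau\}$ with $\tau = \type(\sigma) = \{i_0,\ldots,i_k\}$. I would prove this by induction on $k$, the case $k=0$ being trivial. For the inductive step, apply the hypothesis to the face $\sigma \setminus \{g_k K_{\{i_k\}}\} \in X(k-1)$ to replace $g_0,\ldots,g_{k-1}$ by a common $h$; the edge conditions $h K_{\{i_j\}} \cap g_k K_{\{i_k\}} \neq \emptyset$ for $j<k$ then say exactly that $h^{-1} g_k \in \bigcap_{j<k} K_{\{i_j\}} K_{\{i_k\}}$, and axiom $(\mathcal{A}2)$, applied with $\tau' = \{i_0,\ldots,i_{k-1}\} \subsetneq \I$ and $i = i_k \in \I\setminus\tau'$, collapses this intersection to $K_{\tau'} K_{\{i_k\}}$. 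Writing $h^{-1} g_k = c d$ with $c \in K_{\tau'}$, $d \in K_{\{i_k\}}$, and setting $g = hc$ finishes: $c$ fixes each coset $h K_{\{i_j\}}$ for $j<k$ since $c \in K_{\tau'} \subseteq K_{\{i_j\}}$, while $d^{-1} \in K_{\{i_k\}}$ shows $g K_{\{i_k\}} = g_k K_{\{i_k\}}$.

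Granting the lemma, well-definedness and surjectivity of $f$ are immediate. Every $\sigma \in X(k)$ has the form $\{g K_{\{i\}} : i \in \tau\}$, so $f$ is defined on all of $X(k)$; if $\{g K_{\{i\}} : i \in \tau\} = \{g' K_{\{i\}} : i \in \tau'\}$ as vertex sets, then $\tau = \tau'$ because the sides $S_i$ are pairwise disjoint, and then $g^{-1} g' \in \bigcap_{i\in\tau} K_{\{i\}} = K_\tau$, so $g K_\tau = g' K_\tau$; hence $f$ is a genuine function. Surjectivity holds because $\{g K_{\{i\}} : i \in \tau\}$ is a clique (each pair meets in $g$), hence a simplex of the clique complex $X$, with image $g K_\tau$. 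Equivariance $f(g'.\sigma) = g'\, f(\sigma)$ is then immediate from $g'.\{g K_{\{i\}} : i \in \tau\} = \{g'g K_{\{i\}} : i \in \tau\}$.

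For injectivity I would show $\tau$ is recoverable from the \emph{subset} $g K_\tau \subseteq G$, via the equivalence $K_\tau \subseteq K_{\{i\}} \iff i \in \tau$ for $\tau \in \I(k)$. The implication $\Leftarrow$ is clear; if $i \notin \tau$, then axiom $(\mathcal{A}1)$ with $\tau' = \{i\}$ gives $\langle K_\tau, K_{\{i\}}\rangle = K_{\tau \cap \{i\}} = K_\emptyset = G$, so $K_\tau \subseteq K_{\{i\}}$ would force $K_{\{i\}} = G$, whence $K_\I = K_{\{i\}} \cap K_{\I\setminus\{i\}} = K_{\I\setminus\{i\}}$, contradicting $(\mathcal{A}3)$. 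Consequently $g K_\tau$ is contained in a left $K_{\{i\}}$-coset precisely when $i \in \tau$, so the subset $g K_\tau$ determines $\tau$ and then determines $g$ up to right multiplication by $K_\tau$. Thus if $f(\sigma) = f(\sigma')$ with $\sigma = \{g K_{\{i\}} : i \in \tau\}$ and $\sigma' = \{g' K_{\{i\}} : i \in \tau'\}$, we get $\tau = \tau'$ and $g^{-1} g' \in K_\tau \subseteq K_{\{i\}}$ for all $i \in \tau$, so $\sigma = \sigma'$.

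Finally, for the last clause, the lemma and surjectivity give $f^{-1}(g K_\tau) = \{g K_{\{i\}} : i \in \tau\}$ and $f^{-1}(g' K_{\tau'}) = \{g' K_{\{i\}} : i \in \tau'\}$, and containment of these vertex sets holds iff for every $i \in \tau$ we have $i \in \tau'$ and $g K_{\{i\}} = g' K_{\{i\}}$ — again using disjointness of the sides to force matching types — i.e.\ iff $\tau \subseteq \tau'$ and $(g')^{-1} g \in \bigcap_{i\in\tau} K_{\{i\}} = K_\tau$, as claimed. I expect the uniform-representative lemma of the first paragraph to be the main obstacle: it is the only place axiom $(\mathcal{A}2)$ enters, and getting the induction together with the bookkeeping of which factor lies in which subgroup is the real content; the recovery of $\tau$ from $g K_\tau$ is the one other non-formal point, and everything else is routine.
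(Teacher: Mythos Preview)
Your proof is correct, and it differs from the paper's in two interesting ways.

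For the uniform-representative lemma, the paper does not argue directly: it invokes Theorem~\ref{cost geom thm} (itself a citation to the coset-geometry literature) to conclude that $G$ acts transitively on $X(n)$, hence on simplices of each fixed type, and then transitivity immediately produces a common $g$. Your inductive argument using $(\mathcal{A}2)$ is the self-contained substitute for that black box; it is essentially the standard proof that residual connectedness plus flag-transitivity on chambers follows from the axioms, unpacked to just what is needed here. This buys you independence from the external reference at the cost of a short induction.

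For injectivity, the paper simply assumes the two simplices under comparison have the same type $\tau$ and checks $g_1K_\tau=g_2K_\tau \Rightarrow g_2^{-1}g_1\in K_\tau$. You go further and show that the subset $gK_\tau\subseteq G$ actually determines $\tau$, via the characterization $K_\tau\subseteq K_{\{i\}}\iff i\in\tau$ (using $(\mathcal{A}1)$ and $(\mathcal{A}3)$). Whether this extra step is needed depends on how one reads the codomain $\{gK_\tau:g\in G,\tau\in\I(k)\}$: the paper implicitly treats these as \emph{typed} cosets (so $\tau$ is part of the data), while you treat them as bare subsets of $G$ and recover the type. Your version is the more careful reading, and the argument you give is clean.

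Everything else---well-definedness, surjectivity, equivariance, and the face-relation clause---matches the paper's proof essentially verbatim.
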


\begin{proof}
Without loss of generality, we will assume $\I = \lbrace 0,...,n \rbrace$. By Theorem \ref{cost geom thm}, $G$ acts transitively on top dimensional simplices of $X$. As a result, for every $\lbrace i_0,...,i_k \rbrace \subseteq \I$, $G$ acts transitively on simplices of type $\lbrace i_0,...,i_k \rbrace$. Therefore  $\lbrace g_0 K_{\lbrace i_0 \rbrace},..., g_k  K_{\lbrace i_k \rbrace} \rbrace \in X(k)$ if and only if there is $g \in G$ such that for every $0 \leq j \leq k$, $g K_{\lbrace i_j \rbrace} = g_j K_{\lbrace i_0 \rbrace}$, i.e., 
$$\lbrace g_0 K_{\lbrace i_0 \rbrace},..., g_k  K_{\lbrace i_k \rbrace} \rbrace = \lbrace g K_{\lbrace i_0 \rbrace},..., g  K_{\lbrace i_k \rbrace} \rbrace.$$
As a result 
$$ X (k) = \bigcup_{\tau \subseteq \I, \vert \tau \vert = k+1} \lbrace \lbrace g K_{\lbrace i \rbrace} : i \in \tau \rbrace : g \in G \rbrace,$$
and $f$ is defined on every simplex in $X(k)$. To verify that $f$ is well defined, we note that if $\tau  \subseteq \I$ and $g_1, g_2 \in G$ such that
$$\lbrace g_1 K_{\lbrace i \rbrace} : i \in \tau \rbrace = \lbrace g_2 K_{\lbrace i \rbrace} : i \in \tau \rbrace.$$
Then for every $i \in \tau$, $g_1 K_{\lbrace i \rbrace} = g_2 K_{\lbrace i \rbrace}$, which is equivalent to $g_2^{-1} g_1 \in K_{\lbrace i \rbrace}$. Therefore
$$g_2^{-1} g_1 \in \bigcap_{\lbrace i \rbrace \subseteq \tau} K_{\lbrace i \rbrace} = K_\tau,$$
and $g_1 K_\tau = g_2 K_\tau$ as needed.

By a similar argument, $f$ is injective: if
$$f(\lbrace g_1 K_{\lbrace i \rbrace} : i \in \tau \rbrace) = f(\lbrace g_2 K_{\lbrace i \rbrace} : i \in \tau \rbrace),$$
then $g_1 K_\tau = g_2 K_\tau$. This implies that
$$g_2^{-1} g_1 \in  K_\tau = \bigcap_{\lbrace i \rbrace \subseteq \tau} K_{\lbrace i \rbrace},$$
and therefore
$$\lbrace g_1 K_{\lbrace i \rbrace} : i \in \tau \rbrace = \lbrace g_2 K_{\lbrace i \rbrace} : i \in \tau \rbrace,$$
as needed.

It is obvious that $f$ is onto and equivariant.

Let $\tau, \tau' \subseteq \I$ and $g, g' \in G$ such that $f^{-1} (g K_\tau) \subseteq f^{-1} (g' K_{\tau '})$. By the definition of $f$, this yields that
$$\lbrace g K_{\lbrace i \rbrace} : i \in \tau \rbrace \subseteq \lbrace g' K_{\lbrace i \rbrace} : i \in \tau' \rbrace,$$
which in turn yields that $\tau \subseteq \tau'$ and for every $i \in \tau$ the following holds: $g K_{\lbrace i \rbrace} = g' K_{\lbrace i \rbrace}$. Therefore
\begin{dmath}
\label{equiv-eq}
{\forall i \in \tau, g K_{\lbrace i \rbrace} = g' K_{\lbrace i \rbrace} \Leftrightarrow \forall i \in \tau, (g')^{-1} g K_{\lbrace i \rbrace} = K_{\lbrace i \rbrace} \Leftrightarrow} \\
{\forall i \in \tau, (g')^{-1} g \in K_{\lbrace i \rbrace}
\Leftrightarrow (g')^{-1} g \in \bigcap_{i \in \tau} K_{\lbrace i \rbrace} \Leftrightarrow (g')^{-1} g \in K_\tau.}
\end{dmath}

Conversely, assume that $\tau \subseteq \tau'$ and $(g')^{-1} g \in K_\tau$, then, by \eqref{equiv-eq}, for every $i \in \tau$, $g K_{\lbrace i \rbrace} = g' K_{\lbrace i \rbrace}$ and therefore
$$ \lbrace g K_{\lbrace i \rbrace} : i \in \tau \rbrace = \lbrace g' K_{\lbrace i \rbrace} : i \in \tau \rbrace  \subseteq \lbrace g' K_{\lbrace i \rbrace} : i \in \tau' \rbrace,$$
as needed.
\end{proof}

\begin{remark}
\label{different descrip remark}
The above proposition gives an alternative description of $X=X(G, (K_{\lbrace i \rbrace})_{i \in \I})$: for simplices $\eta, \sigma$ in $X$, denote $\eta \leq \sigma$ if $\eta$ is a face of $\sigma$ (we are not using the notation ``$\subseteq$'' to avoid confusion). Then, for every $0 \leq k \leq n$ we use the identification of function $f$ in the above proposition and define:
$$X (k) = \lbrace  g K_\tau : g \in G, \tau \in \I (k) \rbrace,$$
where $\I (k)$ defined as in Proposition \ref{different descrip prop}, with the relations: $g K_\tau \leq g' K_{\tau'}$ if and only if $\tau \subseteq \tau'$ and $(g')^{-1} g \in K_\tau$. 
\end{remark}

If $(G, (K_{\lbrace i \rbrace})_{i \in \I})$ is a subgroup geometry system, note that for every $\tau \subsetneqq \I$, $(K_\tau, (K_{\tau \cup \lbrace i \rbrace})_{i \in \I \setminus \tau})$ is also a subgroup geometry system. Therefore Theorem \ref{cost geom thm}, can be applied to yield a simplicial complex $X(K_{\tau}, (K_{\tau \cup \lbrace i \rbrace})_{i \in \I \setminus \tau})$. The next proposition states that the links of $X(G, (K_{\lbrace i \rbrace})_{i \in \I})$ can be described as those complexes.  
%The next proposition stated that for every simplex $\sigma$ of $X(G, (K_{\lbrace i \rbrace})_{i \in \I})$ of type $\tau \subsetneqq \I$, the link of $\sigma$ is isomorphic to $X(K_{\tau}, (K_{\tau \cup \lbrace i \rbrace})_{i \in \I \setminus \tau})$:
\begin{proposition}
\label{link coset prop}
Let $n \in \mathbb{N}$, $\I$ a set of cardinality $n+1$ and $G$ be group with subgroups $K_{\lbrace i \rbrace}, i \in \I$ such that $(G, (K_{\lbrace i \rbrace})_{i \in \I})$ is a subgroup geometry system. Given $g \in G$, $\tau \subsetneqq \I$, the link of $g K_{\tau}$ in $X(G, (K_{\lbrace i \rbrace})_{i \in \I})$ is isomorphic to $X(K_{\tau}, (K_{\tau \cup \lbrace i \rbrace})_{i \in \I \setminus \tau})$.
\end{proposition}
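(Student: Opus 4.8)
The plan is to construct the isomorphism explicitly using the alternative ``coset'' description of the coset complex from Proposition \ref{different descrip prop} and Remark \ref{different descrip remark}, and to check it preserves the face relation. First I would reduce to the case $g = e$: since $G$ acts on $X = X(G, (K_{\lbrace i \rbrace})_{i \in \I})$ by type-preserving automorphisms (Theorem \ref{cost geom thm}), left translation by $g^{-1}$ carries the link of $g K_\tau$ isomorphically onto the link of $K_\tau$, so it suffices to show $X_{K_\tau} \cong X(K_\tau, (K_{\tau \cup \lbrace i \rbrace})_{i \in \I \setminus \tau})$.

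Next I would write down the candidate map. Using Remark \ref{different descrip remark}, a simplex of $X$ of type $\sigma \supseteq \tau$ lying in the link of $K_\tau$ is a coset $h K_\sigma$ with $h K_\sigma \leq$-comparable to nothing needed, but rather: $K_\tau$ is a face of $h K_\sigma$, which by Proposition \ref{different descrip prop} means $\tau \subseteq \sigma$ and $h^{-1} e = h^{-1} \in K_\tau$, i.e.\ $h \in K_\tau$. So the simplices of $X$ containing $K_\tau$ as a face are exactly the cosets $h K_\sigma$ with $h \in K_\tau$ and $\tau \subseteq \sigma \subseteq \I$; removing the face $K_\tau$, a simplex of $X_{K_\tau}$ is naturally indexed by such a pair $(h K_\sigma)$ with $\sigma \supsetneqq \tau$ (the vertex set of $X_{K_\tau}$ being those with $|\sigma| = |\tau|+1$, etc.). On the other side, the simplices of $X(K_\tau, (K_{\tau \cup \lbrace i \rbrace})_{i \in \I \setminus \tau})$ are, again by Remark \ref{different descrip remark} applied to the subgroup geometry system $(K_\tau, (K_{\tau \cup \lbrace i \rbrace})_{i \in \I \setminus \tau})$ (which is a subgroup geometry system by the remark preceding this proposition), cosets $h K'_{\rho}$ where $\rho \subseteq \I \setminus \tau$ and $K'_\rho = \bigcap_{i \in \rho} K_{\tau \cup \lbrace i \rbrace}$. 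The key algebraic observation is that $K'_\rho = \bigcap_{i \in \rho} K_{\tau \cup \lbrace i \rbrace} = \bigcap_{i \in \rho} \bigcap_{j \in \tau \cup \lbrace i \rbrace} K_{\lbrace j \rbrace} = \bigcap_{j \in \tau \cup \rho} K_{\lbrace j \rbrace} = K_{\tau \cup \rho}$, so the coset $h K'_\rho$ of $K_\tau$ matches the coset $h K_{\tau \cup \rho}$ with $h \in K_\tau$. I would therefore define
$$\Phi(h K'_\rho) = h K_{\tau \cup \rho} \quad (h \in K_\tau,\ \rho \subseteq \I \setminus \tau),$$
and claim this is the desired simplicial isomorphism from $X(K_\tau, (K_{\tau \cup \lbrace i \rbrace})_{i \in \I \setminus \tau})$ onto $X_{K_\tau}$, after the harmless reindexing $\sigma \leftrightarrow \sigma \setminus \tau = \rho$.

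The remaining steps are bookkeeping: (1) $\Phi$ is well defined and injective because $h_1 K'_\rho = h_2 K'_\rho \iff h_2^{-1} h_1 \in K'_\rho = K_{\tau \cup \rho} \iff h_1 K_{\tau \cup \rho} = h_2 K_{\tau \cup \rho}$; (2) $\Phi$ is onto the set of simplices of $X$ having $K_\tau$ as a face, by the characterization of the previous paragraph; (3) $\Phi$ is rank-preserving, hence descends to a bijection on $X_{K_\tau}$; and (4) $\Phi$ respects the face relations — using the criterion of Remark \ref{different descrip remark} on both complexes, $h_1 K'_{\rho_1} \leq h_2 K'_{\rho_2}$ iff $\rho_1 \subseteq \rho_2$ and $h_2^{-1} h_1 \in K'_{\rho_1} = K_{\tau \cup \rho_1}$, which is exactly the condition for $h_1 K_{\tau \cup \rho_1} \leq h_2 K_{\tau \cup \rho_2}$ (and $\tau \cup \rho_1 \subseteq \tau \cup \rho_2$). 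I expect the only genuine content — and hence the main thing to get right — is the identity $\bigcap_{i \in \rho} K_{\tau \cup \lbrace i \rbrace} = K_{\tau \cup \rho}$ together with the precise statement that the faces of $X$ \emph{containing} $K_\tau$ are precisely the cosets $h K_\sigma$ with $h \in K_\tau$; everything else is a routine transcription of Proposition \ref{different descrip prop} and Remark \ref{different descrip remark}. One minor subtlety worth a sentence: one should note that $\tau \subsetneqq \I$ guarantees $\I \setminus \tau \neq \emptyset$ so the link is a genuine (nonempty) complex, and that axiom $(\mathcal{A}3)$ for the sub-system $(K_\tau, (K_{\tau \cup \lbrace i \rbrace})_{i})$ is inherited — but this was already asserted in the sentence preceding the proposition, so it may simply be cited.
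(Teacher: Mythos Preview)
Your argument is correct. The paper does not give its own proof here --- it simply cites \cite[Theorem 1.8.10 (ii)]{DiaGeomBook} --- so your self-contained verification via the coset description of Proposition~\ref{different descrip prop} and Remark~\ref{different descrip remark} is in fact \emph{more} than what the paper provides, and is presumably close to what the cited reference does. The two genuine ingredients you isolate are exactly the right ones: the identity $\bigcap_{i \in \rho} K_{\tau \cup \{i\}} = K_{\tau \cup \rho}$ (immediate from $K_\eta = \bigcap_{j \in \eta} K_{\{j\}}$), and the characterization of simplices of $X$ having $K_\tau$ as a face as precisely the cosets $hK_\sigma$ with $\tau \subseteq \sigma$ and $h \in K_\tau$ (immediate from the face criterion in Remark~\ref{different descrip remark}). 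With those two facts in hand your map $\Phi$ is a bijection compatible with the face relation, and the reduction to $g=e$ via the $G$-action is routine.

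One cosmetic remark: when you write ``$\Phi(hK'_\rho) = hK_{\tau \cup \rho}$'', you are identifying a link simplex with the simplex of $X$ that contains $K_\tau$ and has it as a face (your ``Option B''), rather than with the residual simplex $\{hK_{\{j\}} : j \in \rho\}$ itself. This is fine and you say as much (``after the harmless reindexing $\sigma \leftrightarrow \sigma \setminus \tau$''), but in a final write-up it would read more cleanly to state once that there is a canonical bijection between simplices of $X_{K_\tau}$ and simplices of $X$ strictly containing $K_\tau$, and then work entirely on the latter side.
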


\begin{proof}
See \cite[Theorem 1.8.10 (ii)]{DiaGeomBook}.
\end{proof}

\subsection{The automorphism group of coset complexes}

Let $n \in \mathbb{N}$, $\I$ a finite set of cardinality $n+1$ (below, we will take without loss of generality $\I = \lbrace 0,..., n\rbrace$) and $G$ be group with subgroups $K_{\lbrace i \rbrace}, i \in \I$ such that $(G, (K_{\lbrace i \rbrace})_{i \in \I})$ is a subgroup geometry system. Denote $X = X(G, (K_{\lbrace i \rbrace})_{i \in \I})$. We have seen above that $G < \Aut (X)$. Below, we will show that given additional assumptions on on the automorphism group of $G$, $\Aut (X) \neq G$, i.e., $X$ has additional symmetries that do not arise from the action of $G$ on $X$. 

\begin{definition}
Let $G$ be a group with subgroups $H_1,...,H_l$. For $\gamma \in \Aut (G)$, we will say that $\gamma$ \emph{preserves the subgroups} $H_1,...,H_l$ if
$$\lbrace \gamma (H_i) : i=1,..., l \rbrace = \lbrace H_i : i=1,..., l \rbrace,$$
where $\gamma (H_i) = \lbrace \gamma (g) : g \in H_i \rbrace$.

Let $(G, (K_{\lbrace i \rbrace})_{i \in \I})$ be a subgroup geometry system. For $\gamma \in \Aut (G)$, we will say that \emph{preserves the subgroup geometry system}, if $\gamma$ preserves the subgroups $K_{\lbrace i \rbrace}, i \in \I$. For a group $\Gamma < \Aut (G)$, we will say that $\Gamma$ \emph{preserves the subgroup geometry system}, if each $\gamma \in \Gamma$ preserves the subgroup geometry system.
\end{definition}

\begin{proposition}
\label{gamma as a permutation prop}
Let $\I = \lbrace 0,...,n \rbrace$, $n \in \mathbb{N}$, $(G, (K_{\lbrace i \rbrace})_{i \in \I})$ a subgroup geometry system and $\Gamma < \Aut (G)$ that preserves the subgroup geometry system. Let
$\Sym (\lbrace 0,1,...,n \rbrace)$ be the permutation group of $\lbrace 0,...,n \rbrace$ and for every $\gamma \in \Gamma$, define $\psi_\gamma \in \Sym (\lbrace 0,1,...,n \rbrace)$ by
$$\gamma. K_{\lbrace i \rbrace} = K_{\lbrace \psi_\gamma (i) \rbrace}, \forall 0 \leq i \leq n.$$
Then the following holds:
\begin{enumerate}
\item $\psi_\gamma$ is well defined.
\item The map $\Psi : \Gamma \rightarrow \Sym (\lbrace 0,1,...,n \rbrace)$, $\Psi (\gamma) = \psi_{\gamma}$ is a group homomorphism.
\item For every $\tau \subseteq \I$, $\gamma . K_{\tau} = K_{\psi_\gamma (\tau)}$.
\end{enumerate}
\end{proposition}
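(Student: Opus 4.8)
The plan is to extract from axiom $(\mathcal{A}3)$ the one fact that makes everything run smoothly, namely that the $n+1$ subgroups $K_{\{0\}},\dots,K_{\{n\}}$ are pairwise distinct, and then to treat parts (1)--(3) as essentially formal consequences of $\gamma$ being a bijective homomorphism of $G$. For the distinctness step: suppose $K_{\{i\}}=K_{\{j\}}$ for some $i\neq j$. Since $K_{\I\setminus\{i\}}=\bigcap_{k\neq i}K_{\{k\}}$ and $j\in\I\setminus\{i\}$, the subgroup $K_{\{j\}}$ is one of the factors in this intersection, so $K_{\{i\}}=K_{\{j\}}\supseteq K_{\I\setminus\{i\}}$; hence $K_\I=K_{\{i\}}\cap K_{\I\setminus\{i\}}=K_{\I\setminus\{i\}}$, contradicting $(\mathcal{A}3)$. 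Thus the $K_{\{i\}}$ are distinct.

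With distinctness in hand, part (1) follows: because $\Gamma$ preserves the subgroup geometry system, $\gamma(K_{\{i\}})\in\{K_{\{0\}},\dots,K_{\{n\}}\}$, so there is at least one index $m$ with $\gamma(K_{\{i\}})=K_{\{m\}}$, and distinctness makes $m$ unique; call it $\psi_\gamma(i)$. This gives a well-defined map $\psi_\gamma\colon\{0,\dots,n\}\to\{0,\dots,n\}$. It is injective since $\psi_\gamma(i)=\psi_\gamma(i')$ gives $\gamma(K_{\{i\}})=\gamma(K_{\{i'\}})$, hence $K_{\{i\}}=K_{\{i'\}}$ (as $\gamma$ is injective on $G$), hence $i=i'$ again by distinctness; being an injective self-map of a finite set, it is a permutation, i.e.\ $\psi_\gamma\in\Sym(\{0,\dots,n\})$.

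For part (2), I would compute $\psi_{\gamma_1\gamma_2}$ directly from the defining relation: with the composition convention $(\gamma_1\gamma_2)(g)=\gamma_1(\gamma_2(g))$ one has $(\gamma_1\gamma_2)(K_{\{i\}})=\gamma_1\bigl(\gamma_2(K_{\{i\}})\bigr)=\gamma_1\bigl(K_{\{\psi_{\gamma_2}(i)\}}\bigr)=K_{\{\psi_{\gamma_1}(\psi_{\gamma_2}(i))\}}$, so by the uniqueness established in (1), $\psi_{\gamma_1\gamma_2}=\psi_{\gamma_1}\circ\psi_{\gamma_2}$, and since clearly $\psi_{\mathrm{id}_G}=\mathrm{id}$, the map $\Psi$ is a group homomorphism. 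For part (3), I would use that an automorphism, being a bijection of $G$, commutes with finite intersections of subgroups: for $\emptyset\neq\tau\subseteq\I$,
\[
\gamma(K_\tau)=\gamma\Bigl(\bigcap_{i\in\tau}K_{\{i\}}\Bigr)=\bigcap_{i\in\tau}\gamma(K_{\{i\}})=\bigcap_{i\in\tau}K_{\{\psi_\gamma(i)\}}=\bigcap_{j\in\psi_\gamma(\tau)}K_{\{j\}}=K_{\psi_\gamma(\tau)},
\]
and for $\tau=\emptyset$ one has $\gamma(K_\emptyset)=\gamma(G)=G=K_\emptyset=K_{\psi_\gamma(\emptyset)}$.

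I do not anticipate a real obstacle here: the only point requiring genuine input from the axioms is the distinctness of the $K_{\{i\}}$, which I would prove first, since without it the permutation $\psi_\gamma$ is not even well defined; the homomorphism property and the behavior on intersections are routine once $\gamma$ is viewed as a bijective endomorphism of $G$. A minor bookkeeping point to keep consistent throughout is the composition convention in $\Aut(G)$ versus $\Sym(\{0,\dots,n\})$, but with parallel conventions $\Psi$ is a homomorphism rather than an anti-homomorphism.
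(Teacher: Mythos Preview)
Your proof is correct and follows essentially the same approach as the paper: use $(\mathcal{A}3)$ to show the $K_{\{i\}}$ are pairwise distinct so that $\psi_\gamma$ is a well-defined permutation, verify $\psi_{\gamma\gamma'}=\psi_\gamma\psi_{\gamma'}$ from the defining relation, and use that an automorphism commutes with intersections to handle $K_\tau$. You are slightly more explicit than the paper (spelling out the distinctness argument in detail and covering the $\tau=\emptyset$ case), but the structure is the same.
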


\begin{proof}
First, we note that by $(\mathcal{A} 3)$ for every $i, j \in \I$, if $i \neq j$, then $K_{\lbrace i \rbrace} \neq K_{\lbrace j \rbrace}$. Therefore if $\gamma \in \Gamma$ preserves $K_{\lbrace 0 \rbrace},..., K_{\lbrace n \rbrace}$, then $\psi_\gamma$ defined above is indeed a permutation on the set $\lbrace 0,...,n\rbrace$.

Second, for $\gamma, \gamma' \in \Gamma$ and every $0 \leq i \leq n$,
\begin{dmath*}
K_{\lbrace \psi_{\gamma \gamma'} (i) \rbrace} =
(\gamma \gamma'). K_{\lbrace i \rbrace} =
\gamma. (\gamma'. K_{\lbrace i \rbrace}) =
\gamma. K_{\lbrace \psi_{\gamma'} (i) \rbrace} =
K_{\lbrace \psi_{\gamma} \psi_{\gamma'} (i) \rbrace},
\end{dmath*}
and therefore $\psi_{\gamma \gamma'} = \psi_{\gamma} \psi_{\gamma'}$ as needed.

Last, for every $\tau= \lbrace i_0,...,i_k \rbrace$,
$$K_{\lbrace i_0,...,i_k \rbrace} = K_{ \lbrace i_0 \rbrace} \cap K_{ \lbrace i_1 \rbrace} \cap ... \cap K_{ \lbrace i_k \rbrace}.$$
Therefore
\begin{dmath*}
\gamma .K_{\lbrace i_0,...,i_k \rbrace} = \gamma. K_{ \lbrace i_0 \rbrace} \cap  ... \cap \gamma. K_{ \lbrace i_k \rbrace} =
 K_{ \lbrace \psi_\gamma (i_0) \rbrace} \cap ... \cap K_{ \lbrace \psi_\gamma (i_k) \rbrace} = K_{\lbrace \psi_\gamma (i_0),...,\psi_\gamma (i_k) \rbrace},
\end{dmath*}
as needed.
\end{proof}

Recall that we saw above that we can define the simplices of $X$ as cosets (see Remark \ref{different descrip remark}). We will use this identification to show that given a group $\Gamma < \Aut (G)$ that preserves the subgroup geometry system, we have that $G \rtimes \Gamma < \Aut (X)$:

\begin{proposition}
\label{Aut prop}
Let $\I = \lbrace 0,...,n \rbrace$, $n \in \mathbb{N}$, $(G, (K_{\lbrace i \rbrace})_{i \in \I})$ a subgroup geometry system and $\Gamma < \Aut (G)$ that preserves the subgroup geometry system. For $X= X(G, (K_{\lbrace i \rbrace})_{i \in \I})$, $G \rtimes \Gamma$ acts simplicially on $X$ as follows: for every $0 \leq k \leq n$, every $\tau \in \I (k) = \lbrace \tau \subseteq \I : \vert \tau \vert = k+1 \rbrace$ and every $g' \in G$, define
 $(g, \gamma) . g' K_\tau = g \gamma (g') K_{\psi_\gamma (\tau)}$.
\end{proposition}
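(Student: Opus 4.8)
The plan is to verify three things: (i) the formula $(g,\gamma).g'K_\tau = g\gamma(g')K_{\psi_\gamma(\tau)}$ is well defined on cosets, (ii) it defines a group action of $G\rtimes\Gamma$, and (iii) each element acts by a simplicial automorphism (equivalently, it preserves the face relation described in Remark \ref{different descrip remark}). Throughout I would work with the coset description of $X$ from Proposition \ref{different descrip prop} and Remark \ref{different descrip remark}, since the formula is phrased in those terms, and I would freely use Proposition \ref{gamma as a permutation prop}, especially item 3, namely $\gamma(K_\tau)=K_{\psi_\gamma(\tau)}$ for every $\tau\subseteq\I$.

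First I would check well-definedness. Suppose $g'K_\tau = g''K_\tau$, i.e.\ $(g'')^{-1}g'\in K_\tau$. Then $\gamma\big((g'')^{-1}g'\big)\in\gamma(K_\tau)=K_{\psi_\gamma(\tau)}$ by Proposition \ref{gamma as a permutation prop}(3), hence $g\gamma(g')K_{\psi_\gamma(\tau)} = g\gamma(g'')K_{\psi_\gamma(\tau)}$, so the output is independent of the chosen representative. Next, the action axioms: the identity $(e,\mathrm{id})$ clearly fixes every coset since $\psi_{\mathrm{id}}=\mathrm{id}$. For composition, recall that in $G\rtimes\Gamma$ the product is $(g_1,\gamma_1)(g_2,\gamma_2) = (g_1\gamma_1(g_2),\gamma_1\gamma_2)$. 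Applying $(g_2,\gamma_2)$ then $(g_1,\gamma_1)$ to $g'K_\tau$ gives $g_1\gamma_1\big(g_2\gamma_2(g')\big)K_{\psi_{\gamma_1}(\psi_{\gamma_2}(\tau))} = g_1\gamma_1(g_2)\,(\gamma_1\gamma_2)(g')\,K_{\psi_{\gamma_1\gamma_2}(\tau)}$, using that $\gamma_1$ is an automorphism and that $\Psi$ is a homomorphism (Proposition \ref{gamma as a permutation prop}(2)); this is exactly $\big((g_1,\gamma_1)(g_2,\gamma_2)\big).g'K_\tau$.

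Finally, to see the action is simplicial I would verify it preserves the face relation $gK_\tau\le g'K_{\tau'}$, which by Remark \ref{different descrip remark} means $\tau\subseteq\tau'$ and $(g')^{-1}g\in K_\tau$. Applying $(h,\gamma)$ sends these to the cosets $h\gamma(g)K_{\psi_\gamma(\tau)}$ and $h\gamma(g')K_{\psi_\gamma(\tau')}$; since $\psi_\gamma$ is a permutation, $\tau\subseteq\tau'$ iff $\psi_\gamma(\tau)\subseteq\psi_\gamma(\tau')$, and $\big(h\gamma(g')\big)^{-1}\big(h\gamma(g)\big)=\gamma\big((g')^{-1}g\big)\in\gamma(K_\tau)=K_{\psi_\gamma(\tau)}$, so the face relation is preserved. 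Since $(h,\gamma)$ and its inverse $(h,\gamma)^{-1}=(\gamma^{-1}(h^{-1}),\gamma^{-1})$ both act as maps preserving $\le$, each element of $G\rtimes\Gamma$ acts as a poset automorphism of the face poset of $X$, i.e.\ a simplicial automorphism; equivalently one notes the action is bijective on each $X(k)$ because $(h,\gamma)$ maps cosets in $S_i$ bijectively to cosets in $S_{\psi_\gamma(i)}$. I do not anticipate a serious obstacle here: the only mild subtlety is keeping the semidirect-product multiplication convention consistent between the group law and the action formula, and making sure that when checking compatibility one uses $\gamma(K_\tau)=K_{\psi_\gamma(\tau)}$ rather than re-deriving it, which is already provided by Proposition \ref{gamma as a permutation prop}.
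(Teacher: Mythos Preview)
Your proof is correct and follows essentially the same approach as the paper: verify the group-action axiom via the semidirect-product multiplication and the homomorphism $\Psi$, then check preservation of the face relation using $\gamma(K_\tau)=K_{\psi_\gamma(\tau)}$. You additionally verify well-definedness on cosets and explicitly note bijectivity via the inverse, both of which the paper leaves implicit but which are worthwhile to spell out.
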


\begin{proof}
First, we will show that the action defined above is indeed a group action, i.e., that for every $g_1, g_2 \in G, \gamma_1, \gamma_2 \in \Gamma$, every $0 \leq k \leq n$, every $\tau \in \I (k)$ and every $g' \in G$, the following holds:
$$(g_1, \gamma_1). ((g_2,\gamma_2)) . g' K_\tau) =((g_1, \gamma_1) (g_2,\gamma_2)) . g' K_\tau .$$
Indeed, fix some $g_1, g_2, \gamma_1, \gamma_2, \tau, g'$ as above, then
\begin{dmath*}
(g_1, \gamma_1). ((g_2,\gamma_2)) . g' K_\tau) = (g_1, \gamma_1). (g_2 \gamma_2 (g') K_{\psi_{\gamma_2} (\tau)}) = g_1 \gamma_1 (g_2) \gamma_1 (\gamma_2 (g')) K_{\psi_{\gamma_1} (\psi_{\gamma_2} (\tau))} = (g_1 \gamma_1 (g_2)) (\gamma_1 \gamma_2) (g') K_{\psi_{\gamma_1 \gamma_2} (\tau)} =
(g_1 \gamma_1 (g_2), \gamma_1 \gamma_2). g' K_\tau =
((g_1, \gamma_1) (g_2,\gamma_2)) . g' K_\tau,
\end{dmath*}
where the last equality is due to the definition of semidirect product.

Second, we need to show that the action defined above preserves face relations, i.e., for every $\tau_1, \tau_2 \subseteq \I$ and every $g_1, g_2 \in G$, if $g_1 K_{\tau_1} \leq g_2 K_{\tau_2}$, then for every $g \in G, \gamma \in \Gamma$, $(g,\gamma). g_1 K_{\tau_1} \leq (g,\gamma). g_2 K_{\tau_2}$. Let $\tau_1, \tau_2, g_1, g_2$ as above such that $g_1 K_{\tau_1} \leq g_2 K_{\tau_2}$, i.e., such that $\tau_1 \subseteq \tau_2$ and $(g_2)^{-1} g_1 \in K_{\tau_1}$. We need to show that
$$\psi_\gamma (\tau_1) \subseteq \psi_\gamma (\tau_2) \text{ and } (g \gamma(g_2))^{-1} (g \gamma (g_1)) \in K_{\psi_\gamma (\tau_1)}.$$
Since $\psi_\gamma$ is a permutation, it is obvious that $\tau_1 \subseteq \tau_2$ implies $\psi_\gamma (\tau_1) \subseteq \psi_\gamma (\tau_2)$. Also, we note that
$$(g \gamma(g_2))^{-1} (g \gamma (g_1)) = \gamma(g_2^{-1} g_1) \in \gamma (K_{\tau_1}) = K_{\psi_\gamma (\tau_1)},$$
as needed.
\end{proof}

\subsection{Passing to quotients}
\label{Passing to quotients subsect}
Let $(G, (K_{\lbrace i \rbrace})_{i \in \I})$ be a subgroup geometry system and let $N \triangleleft G$ be a normal subgroup. The following proposition gives sufficient conditions in order to define a simplicial complex modeled over $G/N$ which has the same links as the complex $X(G, (K_{\lbrace i \rbrace})_{i \in \I})$. This can be seen as an analogue of passing to a quotient in the Cayley graph.

\begin{proposition}
\label{passing to quotient prop}
Let $(G, (K_{\lbrace i \rbrace})_{i \in \I})$ be a subgroup geometry system and let $N \triangleleft G$ be a normal subgroup. Assume that for every $i \in \I$, $K_{\lbrace i \rbrace} \cap N = \lbrace e \rbrace$. Then $(G/N, (K_{\lbrace i \rbrace} N / N)_{i \in \I})$ is a subgroup geometry system. Furthermore, if we denote $X_G = X(G, (K_{\lbrace i \rbrace})_{i \in \I})$ and $X_{G/N} = X(G/N, (K_{\lbrace i \rbrace} N / N)_{i \in \I})$, then the following hold:
\begin{enumerate}
\item For every $\tau \subseteq \I, \tau \neq \emptyset$ and every $g \in G$, the link of $(g N)(K_\tau N) / N$ in $X_{G/N}$ is isomorphic to the link of $g K_\tau$ in $X_G$, i.e., $X_G$ and $X_{G/N}$ have the same links (apart from the link of the empty simplex which is different).
\item $X_G$ is a covering of $X_{G/N}$.
\item If $G/N$ is a finite group, then $X_{G/N}$ is a finite complex and $\vert X_{G/N} (n) \vert \leq \vert G/N \vert$.
\end{enumerate}
\end{proposition}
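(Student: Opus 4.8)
The plan is to carry out the whole argument in the coset description of $X_G=X(G,(K_{\lbrace i\rbrace})_{i\in\I})$ and of $X_{G/N}$ given in Remark~\ref{different descrip remark}, writing $\pi\colon G\to G/N$ for the quotient map and $\overline H=HN/N=\pi(H)$ for a subgroup $H\le G$. A cheap first remark: for nonempty $\tau$ one has $K_\tau\le K_{\lbrace i\rbrace}$ for every $i\in\tau$, so the hypothesis gives $K_\tau\cap N=\lbrace e\rbrace$ and $\pi$ restricts to an isomorphism $K_\tau\xrightarrow{\ \sim\ }\overline{K_\tau}$; consequently $N$ acts freely on every nonempty simplex of $X_G$, since by Proposition~\ref{different descrip prop} the $G$-stabilizer of the simplex corresponding to $gK_\tau$ is $gK_\tau g^{-1}$ and $N\cap gK_\tau g^{-1}=g(N\cap K_\tau)g^{-1}=\lbrace e\rbrace$.

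The heart of the matter -- and the step I expect to be the main obstacle -- is the lemma that \emph{for every nonempty $\tau\subseteq\I$ one has $\bigcap_{i\in\tau}(K_{\lbrace i\rbrace}N)=K_\tau N$}, equivalently $\bigcap_{i\in\tau}\overline{K_{\lbrace i\rbrace}}=\overline{K_\tau}$. The inclusion ``$\supseteq$'' is free; for ``$\subseteq$'' I would induct on $\vert\tau\vert$: writing $\tau=\tau'\sqcup\lbrace j\rbrace$ and taking $g$ with $g\in K_{\lbrace i\rbrace}N$ for all $i\in\tau$, the inductive hypothesis lets me replace $g$, modulo $N$, by an element of $K_{\tau'}$, after which axiom $(\mathcal{A}2)$ (applied to $\tau'$ and $j$) is what pushes it, still modulo $N$, into $K_{\tau'\cup\lbrace j\rbrace}$. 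This is exactly where the axioms and the normal subgroup have to be made to interact, and where whatever extra care about $N$ is needed must be spent; I would also expect to need the companion identity $\bigcap_{j\in\tau}\overline{K_{\lbrace j\rbrace}K_{\lbrace i\rbrace}}=\overline{K_\tau K_{\lbrace i\rbrace}}$, which controls the image of the intersection appearing in $(\mathcal{A}2)$.

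Granting this, I would verify $(\mathcal{A}1)$--$(\mathcal{A}3)$ for $(G/N,(\overline{K_{\lbrace i\rbrace}})_{i\in\I})$ by applying the surjective homomorphism $\pi$ to the corresponding relations for $G$: $\pi$ commutes with forming the subgroup generated by a family and with products of subgroups, and by the lemma $\bigcap_{i\in\tau}\overline{K_{\lbrace i\rbrace}}=\overline{K_\tau}$, so the ``$K_\tau$'' of the quotient system is exactly $\overline{K_\tau}$; thus $(\mathcal{A}1)$ and $(\mathcal{A}2)$ become immediate, and $(\mathcal{A}3)$ holds because $\pi$ is injective on $K_\I$ and on each $K_{\I\setminus\lbrace i\rbrace}$, so $\overline{K_\I}=\overline{K_{\I\setminus\lbrace i\rbrace}}$ would force $K_\I=K_{\I\setminus\lbrace i\rbrace}$. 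For item~(1), Proposition~\ref{link coset prop} identifies the link of $gK_\tau$ in $X_G$ with $X(K_\tau,(K_{\tau\cup\lbrace i\rbrace})_{i\in\I\setminus\tau})$ and the link of $\overline g\,\overline{K_\tau}$ in $X_{G/N}$ with $X(\overline{K_\tau},(\overline{K_{\tau\cup\lbrace i\rbrace}})_{i\in\I\setminus\tau})$, and the isomorphism $\pi|_{K_\tau}\colon K_\tau\to\overline{K_\tau}$ carries each $K_{\tau\cup\lbrace i\rbrace}$ onto $\overline{K_{\tau\cup\lbrace i\rbrace}}$, hence induces an isomorphism between these two coset complexes.

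For item~(2), define $p\colon X_G\to X_{G/N}$ by $p(gK_\tau)=\overline g\,\overline{K_\tau}$. It is well defined and order preserving by the face criterion of Remark~\ref{different descrip remark} (if $(g')^{-1}g\in K_\tau$ then $(\overline{g'})^{-1}\overline g\in\overline{K_\tau}$), it is surjective, and it is $N$-invariant with fibres the $N$-orbits, so it factors through a bijection $X_G/N\to X_{G/N}$. On the closed star of each vertex $gK_{\lbrace i\rbrace}$ the map $p$ is the isomorphism onto the closed star of $\overline g\,\overline{K_{\lbrace i\rbrace}}$ induced by $\pi|_{K_{\lbrace i\rbrace}}$, using item~(1) for the links; a surjective simplicial map that restricts to an isomorphism on every vertex star is a covering, so $X_G$ is a covering of $X_{G/N}$. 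Finally, for item~(3), if $G/N$ is finite then each side $S_i$ of $X_{G/N}$ is the finite set $(G/N)/\overline{K_{\lbrace i\rbrace}}$, so $X_{G/N}$ has finitely many vertices and is finite, and via the bijection $f$ of Proposition~\ref{different descrip prop} the set $X_{G/N}(n)$ is in bijection with $(G/N)/\overline{K_\I}$, whose cardinality is $\vert G/N\vert/\vert\overline{K_\I}\vert\le\vert G/N\vert$.
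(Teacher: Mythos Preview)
Your proposal is considerably more careful than the paper's own proof, which occupies a single sentence: it asserts that injectivity of the quotient map $\Phi$ on each $K_\tau$ ``implies that $(\mathcal{A}1)$--$(\mathcal{A}3)$ are fulfilled,'' and then invokes Proposition~\ref{link coset prop}. You are right that the crux is the identity $\bigcap_{i\in\tau}\overline{K_{\{i\}}}=\overline{K_\tau}$, equivalently $\bigcap_{i\in\tau}(K_{\{i\}}N)=K_\tau N$: without it one cannot identify the ``$K_\tau$'' of the quotient system with $\overline{K_\tau}=\pi(K_\tau)$, and neither the verification of the axioms nor the application of Proposition~\ref{link coset prop} goes through. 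The paper simply does not address this point.

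Unfortunately, this identity is \emph{false} under the stated hypotheses, so your proposed induction via $(\mathcal{A}2)$ cannot succeed, and in fact the proposition as written is incorrect. Take $\I=\{0,1\}$, $G=S_3$, $K_{\{0\}}=\langle(12)\rangle$, $K_{\{1\}}=\langle(13)\rangle$; then $\langle K_{\{0\}},K_{\{1\}}\rangle=G$ and $K_{\{0,1\}}=\{e\}$, so $(\mathcal{A}1)$ and $(\mathcal{A}3)$ hold, while $(\mathcal{A}2)$ is automatic when $\vert\I\vert=2$. With $N=A_3$ one has $K_{\{i\}}\cap N=\{e\}$ for both $i$, yet $K_{\{0\}}N=K_{\{1\}}N=S_3$, so $\overline{K_{\{0\}}}=\overline{K_{\{1\}}}=G/N$ and $(\mathcal{A}3)$ fails in the quotient; concretely, the link of a vertex in $X_G$ (a $6$-cycle) has two points while the link in $X_{G/N}$ (a single edge) has one. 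Thus both your outline and the paper's argument have the same genuine gap. In the paper's actual application to $\EL_{n+1}(\mathbb{F}_q[t])$ the conclusion does hold, but for a more specific reason: the explicit matrix description in Corollary~\ref{subgroups of EL coro} shows directly that the $K_\tau$'s for the quotient ring coincide with the images $\overline{K_\tau}$, bypassing the need for the general lemma.
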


\begin{proof}
By definition $K_\emptyset = G$ and therefore $K_\emptyset N / N = G N / N = G/N$. Let $\Phi : G \rightarrow G/ N$ denote the quotient map $\Phi (g) = g N$. We note that  the assumption that for every $\tau \subseteq \I, \tau \neq \emptyset$, $K_\tau \cap N = \lbrace e \rbrace$, implies that $\Phi$ is injective on every $K_\tau, \tau \subseteq \I, \tau \neq \emptyset$ and this implies that $(\mathcal{A} 1)-(\mathcal{A} 3)$ are fulfilled. 

Also by the injectivity of the quotient map $\Phi$ on every $K_\tau, \tau \subseteq \I, \tau \neq \emptyset$ and by Proposition \ref{link coset prop}, we deduce that the link of $K_\tau N / N$ is $X_{G/N}$ is isomorphic to the link of $K_{\tau}$ in $X_G$.

By the same reasoning, $\Phi$ induces a covering map of simplicial complexes: every simplex in $X_G$ labeled $g K_{\tau}$ is mapped to $(g N)(K_\tau N) / N$ in $X_{G/N}$.   

The last assertion follows from the fact that $G/N$ acts transitively on $X_{G/N} (n)$.
\end{proof}

\begin{observation}
Let $(G, (K_{\lbrace i \rbrace})_{i \in \I})$ be a subgroup geometry system and $N \triangleleft G$ be a normal subgroup such that for every $i \in \I$, $K_{\lbrace i \rbrace} \cap N = \lbrace e \rbrace$. We note that if $\Gamma < \Aut (G)$ preserves the subgroups $K_{\lbrace i \rbrace}, i \in \I$, and for every $\gamma \in \Gamma$, $\gamma (N) = N$, then (by abusing the notation) $\Gamma < \Aut (G /N)$ and it preserves the subgroups  $(K_{\lbrace i \rbrace} N) / N,  i \in \I$ and thus $(G/N) \rtimes \Gamma$ acts on $X_{G/N}$ as in Proposition \ref{Aut prop} above.
\end{observation}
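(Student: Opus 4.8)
The plan is to derive the observation entirely from the two immediately preceding propositions. Proposition~\ref{passing to quotient prop} already supplies, from the hypothesis $K_{\{i\}}\cap N=\{e\}$, that $(G/N,((K_{\{i\}}N)/N)_{i\in\I})$ is itself a subgroup geometry system; and Proposition~\ref{Aut prop} builds a simplicial action of a semidirect product $H\rtimes\Gamma'$ on $X_H$ as soon as $\Gamma'<\Aut(H)$ preserves a subgroup geometry system on $H$. So the only thing to verify is that the hypothesis ``for every $\gamma\in\Gamma$, $\gamma(N)=N$'' lets $\Gamma$ act on $G/N$ by automorphisms preserving the quotient subgroup geometry system; once that is done, Proposition~\ref{Aut prop} applies verbatim with $H=G/N$.

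First I would record the standard fact that an automorphism $\gamma\in\Aut(G)$ with $\gamma(N)=N$ descends to a well-defined automorphism $\bar\gamma\in\Aut(G/N)$, $\bar\gamma(gN)=\gamma(g)N$, and that $\gamma\mapsto\bar\gamma$ is a group homomorphism $\Gamma\to\Aut(G/N)$, say with image $\overline\Gamma\leq\Aut(G/N)$. ``Abusing the notation'' then simply means writing $\Gamma$ for $\overline\Gamma$: the map need not be injective, but there is a natural surjection $(G/N)\rtimes\Gamma\twoheadrightarrow(G/N)\rtimes\overline\Gamma$ along which any action of the latter pulls back, which is precisely why the statement is phrased as an observation rather than a proposition. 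Next, choosing for each $\gamma$ the permutation $\psi_\gamma\in\Sym(\{0,\dots,n\})$ of Proposition~\ref{gamma as a permutation prop} with $\gamma(K_{\{i\}})=K_{\{\psi_\gamma(i)\}}$, one computes $\bar\gamma\big((K_{\{i\}}N)/N\big)=\big(\gamma(K_{\{i\}})\,\gamma(N)\big)/N=\big(K_{\{\psi_\gamma(i)\}}N\big)/N$, so $\bar\gamma$ permutes the subgroups $(K_{\{i\}}N)/N$, in fact via the very same permutation $\psi_\gamma$ (hence the permutation attached to $\bar\gamma$ for the quotient system coincides with $\psi_\gamma$). Therefore $\overline\Gamma<\Aut(G/N)$ preserves the quotient subgroup geometry system.

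With this in hand, Proposition~\ref{Aut prop} applied to $(G/N,((K_{\{i\}}N)/N)_{i\in\I})$ together with $\overline\Gamma$ yields a simplicial action of $(G/N)\rtimes\overline\Gamma$ on $X_{G/N}$, given by the formula there with $\bar\gamma$ in place of $\gamma$ and $\psi_\gamma$ as the type permutation; pulling back along $(G/N)\rtimes\Gamma\twoheadrightarrow(G/N)\rtimes\overline\Gamma$ gives the action of $(G/N)\rtimes\Gamma$ asserted in the observation. I do not expect any genuine obstacle: the substantive work was already done in Propositions~\ref{passing to quotient prop} and~\ref{Aut prop}, and the only points that require (minor) care are the well-definedness of the descent $\gamma\mapsto\bar\gamma$ — which is exactly what $\gamma(N)=N$ delivers — and the harmless identification of $\Gamma$ with its image in $\Aut(G/N)$.
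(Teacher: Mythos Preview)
Your proposal is correct and is precisely the implicit reasoning the paper expects: in the paper this statement is given as an \emph{Observation} with no proof, so there is no ``paper's own proof'' to compare against beyond the evident appeal to Propositions~\ref{passing to quotient prop} and~\ref{Aut prop}. Your verification that $\gamma(N)=N$ makes $\gamma$ descend to $\bar\gamma\in\Aut(G/N)$ and that $\bar\gamma$ permutes the $(K_{\{i\}}N)/N$ via the same $\psi_\gamma$ is exactly what the authors are taking for granted, and your remark about the abuse of notation (identifying $\Gamma$ with its image $\overline\Gamma$) is the correct reading of the parenthetical in the statement.
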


\section{Elementary matrix groups with subgroup geometry systems}
\label{Elementary matrices construction sect}
Our main sources of new examples of groups with subgroup geometry systems are elementary matrix groups and Steinberg groups. We will show that such groups have subgroup geometry systems over any finitely generated algebra over a ring (and any choice of a generating set). We note that we could not find these systems in the literature regarding diagram geometries and, as far as we can tell, these are new examples of subgroup geometries. 

\subsection{subgroup geometry systems for elementary matrices}

Let $R$ be a unital commutative ring and $\mathcal{R}$ be a finitely generated $R$-algebra. Let $\lbrace t_1,...,t_l\rbrace$ be a generating set of $\mathcal{R}$ and let $T$ be the $R$-module generated by $\lbrace 1, t_1,...,t_l\rbrace$, i.e., $T= \lbrace r_0+ \sum_i r_i t_i : r_i \in R \rbrace$. Examples:
\begin{enumerate}
\item $\mathcal{R} = R$ with the generating set $\lbrace 1 \rbrace$ and $T = R$.
\item $\mathcal{R} = R[t]$ with the generating set $\lbrace 1,t \rbrace$ and $T = \lbrace a + bt : a,b \in R \rbrace$.
\item $\mathcal{R} = R\langle t_1,...,t_l \rangle$ (the ring of non-commutative polynomials) with the generating set $\lbrace 1,t_1,...,t_l \rbrace$ and $T = \lbrace a_0 + a_1 t_1 + ... + a_l t_l : a_0,...,a_l \in R \rbrace$.
\end{enumerate}

Let $n \in \mathbb{N}, n \geq 1$. Below we will consider $(n+1) \times (n+1)$ matrices with entries in $\mathcal{R}$ and for the sake of convenience we will index the rows/columns of those matrices with $0,...,n$ (and not with $1,...,n+1$).  For $0 \leq i, j \leq n, i \neq j$ and $r \in \mathcal{R}$, let $e_{i,j} (r)$ be the $(n+1) \times (n+1)$ matrix with $1$'s along the main diagonal, $r$ in the $(i,j)$ entry and $0$'s in all the other entries. The \textit{group of elementary matrices} denoted $\EL_{n+1} (\mathcal{R})$ is the group generated by the elementary matrices with coefficients in $\mathcal{R}$, i.e., $\EL_{n+1} (\mathcal{R}) = \langle e_{i,j} (r) : 0 \leq i, j \leq n, i \neq j, r \in \mathcal{R} \rangle$.

Let $R, \mathcal{R}, \lbrace 1, t_1,...,t_l\rbrace$ and $T$ as above, $n \geq 2$ be an integer and $\I = \lbrace 0,...,n \rbrace$. For $i \in  \I$ define $K_{\lbrace i \rbrace} < \EL_{n+1} (\mathcal{R})$ by
$$K_{\lbrace i \rbrace} = \langle e_{j, j+1} (m) : j \in \I \setminus \lbrace i \rbrace, m  \in T \rangle,$$
where $j+1$ is taken modulo $n+1$, i.e., for $j=n$, $e_{n,n+1} (m) = e_{n,0} (m)$. Below, we will show that  $(\EL_{n+1} (\mathcal{R}), ( K_{\lbrace i \rbrace})_{i \in \I})$ is a subgroup geometry system. In order to do so, we give an explicit description of the subgroups $K_\tau$ for $\tau \subseteq \I$.

\begin{lemma}
\label{K (n) lemma}
Under the above definitions, the group $K_{\lbrace n \rbrace}$ is the group of $(n+1) \times (n+1)$ upper triangular matrices $A = (a_{k,j}) \in \EL_{n+1} (\mathcal{R})$ with $1$'s along the main diagonal such that for every $0 \leq k<j \leq n$, $a_{k,j} \in T^{j-k}$ and $ T^{j-k}$ denotes the $R$-module of all the polynomials in $\lbrace 1, t_1,...,t_l\rbrace$ with degree $\leq j-k$.
\end{lemma}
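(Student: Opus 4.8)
The plan is to prove two containments. Let me write $U$ for the set of upper-triangular matrices $A = (a_{k,j}) \in \EL_{n+1}(\mathcal{R})$ with $1$'s on the diagonal and $a_{k,j} \in T^{j-k}$ for $k < j$; I want to show $K_{\lbrace n \rbrace} = U$. The easier inclusion $K_{\lbrace n \rbrace} \subseteq U$ goes by checking that $U$ is a subgroup containing the generators $e_{j,j+1}(m)$, $m \in T$, $j \in \lbrace 0,\dots,n-1 \rbrace$: each generator clearly lies in $U$ (the only off-diagonal entry is $(j,j+1)$, which sits in $T = T^1$); $U$ is closed under multiplication because the $(k,j)$ entry of a product $AB$ is $\sum_{k \le r \le j} a_{k,r} b_{r,j}$, and each summand lies in $T^{r-k} \cdot T^{j-r} \subseteq T^{j-k}$ since $T^a T^b \subseteq T^{a+b}$ (degrees add under multiplication, and $T^0 = R \cdot 1$ contains $1$); closure under inverses follows since the inverse of a unipotent upper-triangular matrix is given by a finite alternating sum of powers of the nilpotent part, which by the same degree bookkeeping stays inside $U$. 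Hence $U$ is a subgroup containing all generators of $K_{\lbrace n \rbrace}$, so $K_{\lbrace n \rbrace} \subseteq U$.

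The reverse inclusion $U \subseteq K_{\lbrace n \rbrace}$ is the substantive direction. Here I would argue that every $A \in U$ can be reduced to the identity by left-multiplying by generators of $K_{\lbrace n \rbrace}$. The idea is a ``sweeping'' induction: first kill the superdiagonal entries $a_{k,k+1}$ (which lie in $T$) using $e_{k,k+1}(-a_{k,k+1}) \in K_{\lbrace n \rbrace}$; having done so, the new matrix has zero superdiagonal, and its $(k,k+2)$ entries still lie in $T^2$ — but now I want to clear them using products of the $e_{j,j+1}(m)$. The key commutator identity is $[e_{k,k+1}(a), e_{k+1,k+2}(b)] = e_{k,k+2}(ab)$, so $e_{k,k+2}(c)$ for $c \in T^2$ can be written as a product of generators provided $c$ is a sum of products $a b$ with $a, b \in T$ — and indeed $T^2$ is by definition the $R$-span of such products (together with $T^1 \subseteq T^2$, handled by an $e_{k,k+2}$-free adjustment, i.e.\ $e_{k,k+2}(r \cdot 1 \cdot m) = [e_{k,k+1}(r), e_{k+1,k+2}(m)]$). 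More systematically: I would prove by downward induction on the ``width'' $w$ (clearing entries $a_{k,j}$ with $j - k = w$ for $w = 1, 2, \dots, n$) that any $A$ whose entries of width $< w$ already vanish can be multiplied into one whose entries of width $\le w$ vanish, using elements of $K_{\lbrace n \rbrace}$. At width $w$, each entry to be cleared lies in $T^w$, which is spanned over $R$ by products $t_{i_1}\cdots t_{i_r} $ with $r \le w$; each such monomial of degree exactly $w$ is realized as an iterated commutator $[e_{k,k+1}(t_{i_1}),[e_{k+1,k+2}(t_{i_2}),[\dots]]] = e_{k,k+w}(t_{i_1}\cdots t_{i_w})$, and lower-degree monomials are absorbed using shorter chains; summing over monomials gives the required product of generators. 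Multiplying $A$ by the inverse of this product clears the width-$w$ entries without disturbing the already-cleared lower widths (since the correcting elements are themselves upper triangular with entries of width $\ge w$, their action on entries of width $< w$ is trivial). After $w = n$ we reach the identity, so $A \in K_{\lbrace n \rbrace}$.

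The main obstacle I anticipate is the bookkeeping in the reverse inclusion: one must be careful that clearing width-$w$ entries does not resurrect cleared lower-width entries, and that every element of $T^w$ — not just monomials — is expressible as a product of generators (this is where linearity over $R$ and the commutator identity $[e_{k,k+1}(a), e_{k+1,k+2}(b)] = e_{k,k+2}(ab)$, which is additive in both $a$ and $b$ in the relevant quotient, must be used carefully). A clean way to organize this is to work in the group filtration of $U$ by the normal subgroups $U_w = \lbrace A \in U : a_{k,j} = 0 \text{ for } 0 < j-k < w \rbrace$ and observe that each successive quotient $U_w / U_{w+1}$ is an abelian group isomorphic to $\bigoplus_{j-k = w} T^{j-k}$, generated (as the commutator structure shows) by the images of the generators of $K_{\lbrace n \rbrace}$; an induction on $w$ then finishes the argument. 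Since the matrices are of bounded size $(n+1)\times(n+1)$, all these filtrations are finite and the induction terminates.
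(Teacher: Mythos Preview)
Your proof is correct and takes essentially the same approach as the paper: both establish $K_{\lbrace n \rbrace} \subseteq U$ by observing the generators lie in the subgroup $U$, and both use the Steinberg commutator relation $[e_{k,k+1}(a), e_{k+1,k+2}(b)] = e_{k,k+2}(ab)$ iterated along the superdiagonals to produce every $e_{k,j}(m)$ with $m \in T^{j-k}$ inside $K_{\lbrace n \rbrace}$, after which row reduction gives the reverse inclusion. Your width-by-width sweeping and filtration $U_w$ simply make explicit the bookkeeping that the paper compresses into the phrase ``by row reduction,'' but the underlying argument is identical.
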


\begin{proof}
Let $H < \EL_{n+1} (\mathcal{R})$ be the subgroup of $(n+1) \times (n+1)$ upper triangular matrices $A = (a_{k,j}) \in \EL_{n+1} (\mathcal{R})$ with $1$'s along the main diagonal such that for every $k<j$, $a_{k,j} \in T^{j-k}$ (we leave it to the reader to verify that this is in fact a subgroup, i.e., that it is closed under multiplication and inverse). We note that $K_{\lbrace n \rbrace}$ is generated by elements in $H$ and therefore $K_{\lbrace n \rbrace} \subseteq H$. For the reverse inclusion, we recall that multiplication of elementary matrices follow the (Steinberg) relation: for every pairwise  different $0 \leq j_1, j_2, j_3 \leq n$ and every $m_1, m_2 \in R$, 
$$[e_{j_1,j_2} (m_1), e_{j_2,j_3} (m_2)] = e_{j_1,j_3} (m_1 m_2).$$
Thus, for every $0 \leq k < j \leq n$ using this relation iteratively, yields that for every $m \in T^{j-k}$, $e_{k,j} (m) \in  K_{\lbrace n \rbrace}$. By row reduction, every matrix of $H$ can be written as a product of matrices of the form $\lbrace e_{k,j} (m): 0 \leq k < j \leq n, m \in T^{j-k} \rbrace$ and therefore $K_{\lbrace n \rbrace} = H$.
\end{proof}

The above lemma characterizes $K_{\lbrace n \rbrace}$, but a similar argument can be applied for every $i \in \I$ as stated in the following Lemma. The other cases are a little harder to describe (since they not upper triangular matrices) and leave the verification to the reader (this Lemma can also be deduced from the case $K_{\lbrace n \rbrace}$ using the automorphisms of $\EL_n (\mathcal{R})$ that preserve the $K_{\lbrace i \rbrace}$'s - see below). 

\begin{lemma}
For every $i \in \I$, $K_{\lbrace i \rbrace}$ is the group composed of all the matrices $A=(a_{k,j})$ such that
$$a_{k,j} \in \begin{cases}
\lbrace 1 \rbrace & k =j \\
T^{j-k} & k \neq j, i \notin \lbrace k, k+1,...,j-1 \rbrace \\
\lbrace 0 \rbrace & \text{otherwise}
\end{cases},$$
where $j-k$ and the elements of $\lbrace k, k+1,...,j-1 \rbrace$ are taken $\mod (n+1)$.
\end{lemma}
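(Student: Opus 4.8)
The plan is to reduce the general statement to the already-proved case $i = n$ (Lemma \ref{K (n) lemma}) by conjugating with a cyclic permutation matrix, and to handle the "wrap-around'' bookkeeping carefully. First I would fix the cyclic permutation $\rho$ of $\lbrace 0,\dots,n\rbrace$ sending $k \mapsto k - (i+1) \pmod{n+1}$, so that $\rho(i) = n$, and consider the automorphism $\phi_\rho$ of $\EL_{n+1}(\mathcal{R})$ induced by simultaneous permutation of rows and columns: concretely $\phi_\rho(e_{k,k+1}(m)) = e_{\rho(k),\rho(k)+1}(m)$ when $k \neq i$ maps generators of $K_{\lbrace i\rbrace}$ to generators of $K_{\lbrace n\rbrace}$ (using $\rho(k+1) = \rho(k)+1 \bmod (n+1)$, valid precisely because we exclude the index $j=i$ where the successor wraps). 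Hence $\phi_\rho(K_{\lbrace i\rbrace}) = K_{\lbrace n\rbrace}$, and I would invoke Lemma \ref{K (n) lemma} to describe $K_{\lbrace n\rbrace}$ as the upper-triangular matrices with $a_{k,j}\in T^{j-k}$, then pull the description back through $\phi_\rho^{-1}$.

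The bulk of the work is then translating the condition "$(k,j)$ lies strictly above the diagonal in the $\rho$-coordinates'' back into the original coordinates. Writing $k' = \rho(k)$, $j' = \rho(j)$, the condition $k' < j'$ (as integers in $\lbrace 0,\dots,n\rbrace$) together with the entry-bound $a_{k,j}\in T^{j'-k'}$ should be shown to be equivalent to "$i \notin \lbrace k, k+1, \dots, j-1\rbrace \bmod (n+1)$'' and "$a_{k,j}\in T^{(j-k)\bmod(n+1)}$'', with all other off-diagonal entries forced to $0$. The key observation making this work is that $\rho$ is an order-preserving bijection from the cyclic interval $\lbrace i+1, i+2, \dots, i+1+(n-1)\rbrace$ (mod $n+1$), i.e. the elements "after $i$'', onto $\lbrace 0, 1, \dots, n\rbrace$, so that $k'<j'$ iff, traversing the cycle starting right after $i$, one meets $k$ before $j$ — which is exactly the statement that the cyclic arc from $k$ to $j$ (going $k, k+1, \dots, j-1$) avoids $i$. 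Moreover in that case $j' - k' = (j-k) \bmod (n+1)$, giving the exponent of $T$.

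I expect the main obstacle to be purely the combinatorial/indexing verification in the previous paragraph: getting the mod-$(n+1)$ arithmetic exactly right, checking that "otherwise'' genuinely means the entry is $0$ (which follows because $k' > j'$ puts it strictly below the diagonal in $\rho$-coordinates, and $K_{\lbrace n\rbrace}$ consists of upper-triangular matrices), and confirming that the exceptional diagonal case $k=j$ is handled consistently. A secondary point worth a sentence is checking that $\phi_\rho$ is genuinely an automorphism of $\EL_{n+1}(\mathcal{R})$ (it is conjugation by the permutation matrix of $\rho$, which normalizes $\EL_{n+1}(\mathcal{R})$ since it permutes the generating set $\lbrace e_{a,b}(r)\rbrace$ among themselves), and that it sends the full generating set of $K_{\lbrace i\rbrace}$ — not just individual generators — onto that of $K_{\lbrace n\rbrace}$, so that images of subgroups match. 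Once these are in place the lemma follows immediately, and as the statement already notes, the alternative is a direct row-reduction argument mirroring the proof of Lemma \ref{K (n) lemma}, which I would mention as a fallback but not carry out.
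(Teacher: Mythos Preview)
Your proposal is correct and follows exactly the approach the paper itself suggests: the paper does not give a proof but explicitly remarks that the lemma ``can also be deduced from the case $K_{\lbrace n \rbrace}$ using the automorphisms of $\EL_n(\mathcal{R})$ that preserve the $K_{\lbrace i\rbrace}$'s --- see below,'' and your cyclic-permutation conjugation $\phi_\rho$ is precisely (a power of) the automorphism $\gamma_r=\gamma_{\rho_{\mathrm{cyc}}}$ defined in the paper's symmetry section. Your analysis of the cyclic-interval condition $i\notin\lbrace k,k+1,\dots,j-1\rbrace$ and of the exponent $(j-k)\bmod(n+1)$ is the right way to do the bookkeeping, and the fallback you mention (direct row-reduction as in the $i=n$ case) is the other route the paper alludes to.
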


As a corollary, we have a description of all the groups of the form $K_\tau$:
\begin{corollary}
\label{subgroups of EL coro}
Let $0 \leq p <n$ and $0 \leq i_0 < i_1 <...<i_p \leq n$. For $\tau = \I \setminus \lbrace i_0, i_1,...,i_p \rbrace$, $K_\tau$ is the group composed of all the matrices $A=(a_{k,j})$ such that
$$a_{k,j} \in \begin{cases}
\lbrace 1 \rbrace & k =j \\
T^{j-k} & k \neq j, \lbrace k, k+1,...,j-1 \rbrace \subseteq \lbrace i_0, i_1,...,i_p \rbrace \\
\lbrace 0 \rbrace & \text{otherwise}
\end{cases},$$
where $j-k-1$, $k+1,...,j-1$ above are taken $\mod (n+1)$.
\end{corollary}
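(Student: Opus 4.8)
The plan is to deduce the corollary directly from the preceding Lemma (the description of each $K_{\lbrace i \rbrace}$) together with the defining identity $K_\tau = \bigcap_{i \in \tau} K_{\lbrace i \rbrace}$. Write $\tau = \I \setminus \lbrace i_0,\ldots,i_p \rbrace$; since $p < n$, this set is nonempty, so the intersection is over a nonempty index set, and a matrix $A = (a_{k,j}) \in \EL_{n+1}(\mathcal{R})$ lies in $K_\tau$ if and only if $A \in K_{\lbrace i \rbrace}$ for every $i \in \tau$.

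First I would record what each of these membership conditions says and then intersect. By the preceding Lemma, for a fixed $i \in \tau$ the matrix $A$ lies in $K_{\lbrace i \rbrace}$ exactly when $a_{k,k} = 1$ for all $k$, and for $k \neq j$ one has $a_{k,j} \in T^{j-k}$ provided $i \notin \lbrace k, k+1, \ldots, j-1 \rbrace$ (indices mod $n+1$) while $a_{k,j} = 0$ otherwise. The diagonal constraint is the same for all $i$, so it contributes $a_{k,k}=1$ to the intersection. For an off-diagonal pair $(k,j)$ there are two cases according to whether the cyclic interval $\lbrace k, k+1, \ldots, j-1 \rbrace$ (mod $n+1$) is contained in $\lbrace i_0, \ldots, i_p \rbrace = \I \setminus \tau$. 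If it is, then no $i \in \tau$ meets this interval, so every factor of the intersection imposes exactly $a_{k,j} \in T^{j-k}$, hence $a_{k,j} \in T^{j-k}$. If it is not, choose $i_* \in \tau$ lying in $\lbrace k, \ldots, j-1 \rbrace$; then membership in $K_{\lbrace i_* \rbrace}$ already forces $a_{k,j} = 0$, and the remaining factors, which only demand $a_{k,j} \in T^{j-k}$, are then automatically satisfied since $0 \in T^{j-k}$. Combining these observations, $A \in K_\tau$ if and only if $a_{k,k}=1$ for all $k$ and, for $k \neq j$, $a_{k,j} \in T^{j-k}$ when $\lbrace k, \ldots, j-1 \rbrace \subseteq \lbrace i_0, \ldots, i_p \rbrace$ and $a_{k,j}=0$ otherwise, which is precisely the asserted description.

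Finally I would note that no separate verification that this matrix set is a group is needed, since $K_\tau$, being an intersection of subgroups, is automatically a subgroup of $\EL_{n+1}(\mathcal{R})$; alternatively one can check closure under products and inverses directly by the same row-reduction and Steinberg-relation computation used in the proof of Lemma \ref{K (n) lemma}. The only point requiring care is the bookkeeping with the cyclic intervals $\lbrace k, \ldots, j-1 \rbrace$ and the exponents $j-k$ read mod $n+1$; once "$i \notin \lbrace k, \ldots, j-1\rbrace$" is interpreted cyclically in every factor, the intersection is immediate, so I do not expect any real obstacle beyond this combinatorial care. If one prefers not to invoke the (there unproven) Lemma describing $K_{\lbrace i \rbrace}$, the same description of $K_{\lbrace i \rbrace}$ can first be obtained for each $i$ from the case $i=n$ of Lemma \ref{K (n) lemma} by conjugating by the permutation automorphism of $\EL_{n+1}(\mathcal{R})$ that cyclically shifts the indices and carries $K_{\lbrace n \rbrace}$ to $K_{\lbrace i \rbrace}$, and one then intersects exactly as above.
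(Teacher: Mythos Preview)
Your proposal is correct and is precisely the argument the paper has in mind: the corollary is stated without proof, immediately after the lemma describing each $K_{\lbrace i \rbrace}$, so the intended deduction is exactly the intersection $K_\tau=\bigcap_{i\in\tau}K_{\lbrace i\rbrace}$ together with the entrywise analysis of when the cyclic interval $\lbrace k,\ldots,j-1\rbrace$ avoids all of $\tau$. Your remark that the description of $K_{\lbrace i\rbrace}$ can be obtained from the case $i=n$ via the cyclic permutation automorphism matches the paper's own parenthetical comment following that lemma.
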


\begin{corollary}
\label{subgrp of EL generation coro}
For every $\tau \subseteq \I$, $K_{\tau} = \langle e_{j,j+1} (m) : j \in \I \setminus \tau, m  \in T \rangle \rangle$.
\end{corollary}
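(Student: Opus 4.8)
The plan is to deduce this from Corollary \ref{subgroups of EL coro}, which gives an explicit matrix description of $K_\tau$. Fix $\tau = \I \setminus \lbrace i_0,\dots,i_p\rbrace$ with $0\le i_0<\dots<i_p\le n$, and let $H_\tau = \langle e_{j,j+1}(m) : j\in\I\setminus\tau, m\in T\rangle$ denote the right-hand side. One inclusion is immediate: each generator $e_{j,j+1}(m)$ of $H_\tau$ lies in $K_\tau$ (it is of the type allowed in Corollary \ref{subgroups of EL coro}, since $\lbrace j\rbrace\subseteq\lbrace i_0,\dots,i_p\rbrace$ when $j\in\I\setminus\tau$), so $H_\tau\subseteq K_\tau$. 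The substance is the reverse inclusion $K_\tau\subseteq H_\tau$, and here I would imitate the two-step argument used in the proof of Lemma \ref{K (n) lemma}: first show that enough longer elementary matrices $e_{k,j}(m)$ already lie in $H_\tau$, and then show that every matrix in $K_\tau$ factors as a product of such elementary matrices.

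For the first step, I would use the Steinberg commutator relation $[e_{j_1,j_2}(m_1),e_{j_2,j_3}(m_2)]=e_{j_1,j_3}(m_1m_2)$ iteratively. Given $k,j$ (indices mod $n+1$) with $\lbrace k,k+1,\dots,j-1\rbrace\subseteq\lbrace i_0,\dots,i_p\rbrace$ — i.e. the ``arc'' of consecutive indices from $k$ to $j-1$ is missing from $\tau$ — the generators $e_{k,k+1}(\cdot),e_{k+1,k+2}(\cdot),\dots,e_{j-1,j}(\cdot)$ all belong to $H_\tau$, and iterated commutators of them produce $e_{k,j}(m)$ for every $m\in T^{\,j-k}$ (each commutation step multiplies the exponent-degree by stacking one more $T$-factor, matching the $T^{j-k}$ appearing in Corollary \ref{subgroups of EL coro}). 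This shows $e_{k,j}(m)\in H_\tau$ for exactly the pairs $(k,j)$ and elements $m$ that can appear as off-diagonal entries of elements of $K_\tau$.

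For the second step, I would argue that every $A\in K_\tau$ is a product of elementary matrices $e_{k,j}(m)$ of the kind just shown to lie in $H_\tau$, via row reduction (Gaussian elimination) that respects the ``arc'' structure. The point is that the nonzero off-diagonal entries of $A$ are constrained to positions whose arcs lie inside $\lbrace i_0,\dots,i_p\rbrace$, and this set of positions is closed under the reduction process (clearing entry $(k,j)$ by multiplying by $e_{k,j}(-m)$ only affects entries $(k,j')$ whose arc contains the arc of $(k,j)$, hence still lies in $\lbrace i_0,\dots,i_p\rbrace$), so the elimination never needs an elementary matrix outside $H_\tau$ and terminates at the identity. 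The main obstacle is bookkeeping the arithmetic modulo $n+1$: when $k>j-1$ in the cyclic order the ``arc'' wraps around, and one must check that the Steinberg relations and the row-reduction ordering still go through in that wrapped case — this is routine but is the place where one has to be careful, exactly as the authors flag when they say the non-upper-triangular cases are ``a little harder to describe.'' An alternative, cleaner route for that bookkeeping is to invoke the dihedral automorphisms of $\EL_{n+1}(\mathcal{R})$ that permute the $K_{\lbrace i\rbrace}$ cyclically (mentioned in the excerpt): up to such an automorphism one may assume the complement $\lbrace i_0,\dots,i_p\rbrace$ is an initial segment $\lbrace 0,1,\dots,p\rbrace$, reducing to the genuinely upper-triangular situation already handled in Lemma \ref{K (n) lemma}.
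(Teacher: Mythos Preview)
Your approach is essentially the paper's: for $\tau\neq\emptyset$ invoke the explicit matrix description of Corollary~\ref{subgroups of EL coro} and rerun the two-step argument of Lemma~\ref{K (n) lemma} (iterated Steinberg commutators to produce the longer $e_{k,j}(m)$, then row reduction). Two caveats are worth flagging.

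First, the case $\tau=\emptyset$ is not covered by your argument as written: Corollary~\ref{subgroups of EL coro} is stated only for $p<n$, and your row-reduction step presupposes that off-diagonal entries lie in $T^{j-k}$, which fails for a general element of $K_\emptyset=\EL_{n+1}(\mathcal{R})$. The paper handles this case separately as a ``standard exercise'' (iterating commutators around the full cycle to reach arbitrary $e_{i,j}(r)$ with $r\in\mathcal{R}$).

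Second, your proposed ``cleaner route'' via the dihedral automorphisms is not correct as stated: a dihedral symmetry of $\{0,\dots,n\}$ can carry $\I\setminus\tau=\{i_0,\dots,i_p\}$ to the initial segment $\{0,\dots,p\}$ only when $\I\setminus\tau$ is already a single cyclic interval, which it need not be (e.g.\ $n=3$, $\tau=\{1,3\}$, $\I\setminus\tau=\{0,2\}$). The salvage is to note that $K_\tau$ is a commuting product of the pieces corresponding to the maximal cyclic arcs of $\I\setminus\tau$ and to rotate each arc separately into upper-triangular position; but since your direct argument already goes through, the shortcut is not needed.
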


\begin{proof}
For $\tau = \emptyset$, it is a standard exercise that 
$$\EL_{n+1} (\mathcal{R}) = \langle e_{0,1} (m),...,e_{n-1,n} (m), e_{n,0} (m) : m \in T \rangle.$$ 
For $\tau \neq \emptyset$, $K_{\tau} = \langle e_{j,j+1} (m) : j \in \I \setminus \tau, m  \in T \rangle$ follows from Corollary \ref{subgroups of EL coro} by arguments similar to those of the proof of Lemma \ref{K (n) lemma}.
\end{proof}

After this corollary we are ready to prove the first part of Theorem \ref{geometry system for elementary mat groups thm - intro} stated in the introduction:

\begin{theorem}
\label{geometry system for elementary mat groups thm}
With the definitions above, $(\EL_{n+1} (\mathcal{R}), ( K_{\lbrace i \rbrace} )_{i \in \I})$ is subgroup geometry system. 
\end{theorem}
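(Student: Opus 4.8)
The plan is to verify the three axioms $(\mathcal{A}1)$, $(\mathcal{A}2)$, $(\mathcal{A}3)$ directly, leaning on the explicit matrix descriptions of the $K_\tau$ in Corollary \ref{subgroups of EL coro} and the generation statement in Corollary \ref{subgrp of EL generation coro}. Two of the axioms are short. For $(\mathcal{A}3)$: Corollary \ref{subgroups of EL coro} applied to $\tau=\I$ (the case $\{i_0,\dots,i_p\}=\emptyset$) shows $K_\I$ is the trivial group, whereas $K_{\I\setminus\{i\}}=\langle e_{i,i+1}(m):m\in T\rangle$ contains $e_{i,i+1}(1)\neq I$, so $K_\I\neq K_{\I\setminus\{i\}}$. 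For $(\mathcal{A}1)$: since $\tau\cap\tau'$ is contained in both $\tau$ and $\tau'$, intersecting over a smaller index set gives $K_\tau,K_{\tau'}\subseteq K_{\tau\cap\tau'}$, hence $\langle K_\tau,K_{\tau'}\rangle\subseteq K_{\tau\cap\tau'}$; conversely, Corollary \ref{subgrp of EL generation coro} exhibits $K_{\tau\cap\tau'}$ as generated by the elements $e_{j,j+1}(m)$ with $j\in\I\setminus(\tau\cap\tau')$, and since $\I\setminus(\tau\cap\tau')=(\I\setminus\tau)\cup(\I\setminus\tau')$ each such generator lies in $K_\tau$ or in $K_{\tau'}$, so $K_{\tau\cap\tau'}\subseteq\langle K_\tau,K_{\tau'}\rangle$.

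The substance is $(\mathcal{A}2)$: for $\tau\subsetneq\I$ and $i\in\I\setminus\tau$ one must show $K_\tau K_{\{i\}}=\bigcap_{j\in\tau}K_{\{j\}}K_{\{i\}}$. The inclusion $\subseteq$ is immediate from $K_\tau\subseteq K_{\{j\}}$. For the reverse inclusion I would argue by explicit matrix computation. First, the cyclic automorphism of $\EL_{n+1}(\mathcal{R})$ given by $e_{k,l}(r)\mapsto e_{k+1,l+1}(r)$ (indices mod $n+1$), i.e. conjugation by the cyclic permutation matrix, carries $K_{\{i\}}$ to $K_{\{i+1\}}$ and hence $K_\tau$ to $K_{\tau+1}$; since it transforms the identity for $(\tau,i)$ into the one for $(\tau+1,i+1)$, after rotating we may assume $i=n$, in which case $K_{\{n\}}$ is the group of upper unitriangular matrices with the degree bounds $a_{k,l}\in T^{l-k}$ of Lemma \ref{K (n) lemma}. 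Now take $g=(g_{k,l})\in\bigcap_{j\in\tau}K_{\{j\}}K_{\{n\}}$. For each $j\in\tau$, writing $g=h_j u_j$ with $h_j\in K_{\{j\}}$ and $u_j\in K_{\{n\}}$ and using the explicit forms of $K_{\{j\}}$ (a cyclically rotated unitriangular group) and of $K_{\{n\}}$ from Corollary \ref{subgroups of EL coro}, one reads off that membership in $K_{\{j\}}K_{\{n\}}$ forces certain entries of $g$ to vanish and forces certain other entries to be prescribed polynomial expressions, of prescribed $T$-degree, in the remaining ones. The task is then to check that the conjunction of these conditions over all $j\in\tau$ describes exactly $K_\tau K_{\{n\}}$, which one reads off once more from Corollary \ref{subgroups of EL coro}. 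It may be convenient to organize this inductively, peeling one index $j_0$ off $\tau$ at a time and proving the two-term identity $K_\tau K_{\{i\}}\cap K_{\{j_0\}}K_{\{i\}}=K_{\tau\cup\{j_0\}}K_{\{i\}}$, whose nontrivial inclusion is again the matrix statement just described.

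I expect the main obstacle to be precisely this matrix bookkeeping. The product $K_{\{j\}}K_{\{n\}}$ of two ``rotated unipotent'' groups is not itself a subgroup, so one must track carefully which matrix entries stay free, which are forced to be $0$, and which are forced to equal a specific product of other entries, and all of this while respecting the filtration of $T$ by the $R$-submodules $T^d$, since it is exactly the degree constraints (an entry being forced to have ``too high a degree'' to be admissible) that produce the extra vanishing conditions when one intersects over $j\in\tau$. Carrying this out uniformly in $n$, $\tau$ and $i$, rather than verifying small cases, is where the care lies; everything else in the statement then follows formally from Corollaries \ref{subgroups of EL coro} and \ref{subgrp of EL generation coro}.
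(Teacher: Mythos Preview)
Your handling of $(\mathcal{A}1)$ and $(\mathcal{A}3)$ matches the paper exactly, and your reduction of $(\mathcal{A}2)$ to the case $i=n$ via the cyclic automorphism is the same first move the paper makes. Where you diverge is in the execution of $(\mathcal{A}2)$ after that reduction: you propose to analyse the sets $K_{\{j\}}K_{\{n\}}$ entry by entry and track the constraints, possibly inductively; the paper instead introduces, for any $\eta\subseteq\I$ with $n\notin\eta$, the subgroup $K_\eta^-\leq K_\eta$ of matrices with $0$'s above the main diagonal, and observes that every coset $gK_{\{n\}}$ with $g\in K_\eta$ is represented by an element of $K_\eta^-$ (take the upper-triangular part $g'$ of $g$, note $g'\in K_\eta\cap K_{\{n\}}$ and $g(g')^{-1}\in K_\eta^-$). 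This reduces $(\mathcal{A}2)$ to the equality $K_\tau^- K_{\{n\}}=\bigcap_{j\in\tau}K_{\{j\}}^- K_{\{n\}}$, which is essentially automatic: $K_\eta^-$ is lower unitriangular and $K_{\{n\}}$ is upper unitriangular, so a factorisation $g=hu$ with $h$ lower and $u$ upper is unique, and hence the right-hand side equals $\bigl(\bigcap_{j\in\tau}K_{\{j\}}^-\bigr)K_{\{n\}}=K_\tau^- K_{\{n\}}$.

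Your direct bookkeeping approach is not wrong and would succeed, but the paper's $K_\eta^-$ trick bypasses precisely the obstacle you flag: it converts the awkward intersection of non-subgroups $K_{\{j\}}K_{\{n\}}$ into an intersection of genuine subgroups $K_{\{j\}}^-$ followed by a single product with $K_{\{n\}}$, using the unique lower--upper decomposition to justify the swap. This avoids all the degree-filtration accounting you anticipated.
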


\begin{proof}
In order to prove that $(\EL_{n+1} (\mathcal{R}), ( K_{\lbrace i \rbrace} )_{i \in \I})$ is subgroup geometry system we need to verify conditions $(\mathcal{A} 1)- (\mathcal{A} 3)$. 

Condition $(\mathcal{A} 1)$ follows immediately from Corollary \ref{subgrp of EL generation coro}. 

Without loss of generality, it is enough to prove condition $(\mathcal{A} 2)$ for $i=n$, i.e., to prove that for every $\tau \subseteq \I$, $n \notin \tau$, 
$$K_{\tau} K_{\lbrace n \rbrace} = \bigcap_{i \in \tau} K_{\lbrace i \rbrace} K_{\lbrace n \rbrace}.$$
We note that this is equivalent to showing that 
$$\lbrace g K_{\lbrace n \rbrace} : g \in K_{\tau}\rbrace = \bigcap_{i \in \tau} \lbrace g K_{\lbrace n \rbrace} : g \in K_{\lbrace i \rbrace} \rbrace.$$
For every $\eta \subseteq \I$, $n \notin \eta$, denote $K_{\eta}^-$ to be the subgroup of $K_{\eta}$ which consists of all the matrices in $K_{\eta}$ with $0$'s above the main diagonal. We will first show that 
$$\lbrace g K_{\lbrace n \rbrace} : g \in K_{\eta}\rbrace = \lbrace g K_{\lbrace n \rbrace} : g \in K_{\eta}^- \rbrace.$$ 
Indeed, for every matrix $g=(a_{k,j}) \in K_{\eta}$, define a matrix $g ' =(b_{k,j})$ as 
$$b_{k,j} = \begin{cases}
a_{k,j} & k \leq j \\
0 & k>j
\end{cases}.$$
Note that $g'$ is an upper triangular matrix and by Corollary \ref{subgroups of EL coro}, $g' \in K_{\eta} \cap K_{\lbrace n \rbrace}$ and $g (g')^{-1} \in K_{\eta}^-$. Thus, for every $g \in  K_{\eta}$, $g K_{\lbrace n \rbrace} = g (g')^{-1} K_{\lbrace n \rbrace}$, with $g (g')^{-1} \in K_{\eta}^-$ as needed.

Given some fixed $\tau \subseteq \I$, $n \notin \tau$, the equality
$$\lbrace g K_{\lbrace n \rbrace} : g \in K_{\eta}\rbrace = \lbrace g K_{\lbrace n \rbrace} : g \in K_{\eta}^- \rbrace$$
is true for $\eta = \tau$ and $\eta = \lbrace i \rbrace$ for every $i \in \tau$. Furthermore, Corollary \ref{subgroups of EL coro} gives an explicit description of $K_{\tau}^-$ and $K_{\lbrace i \rbrace}$ for every $i \in \tau$ and from this description it follows that 
$$\lbrace g K_{\lbrace n \rbrace} : g \in K_{\tau}^- \rbrace =  \bigcap_{i \in \tau} \lbrace g K_{\lbrace n \rbrace} : g \in K_{\lbrace i \rbrace}^- \rbrace,$$
as needed.
 
Last, note that by Corollary \ref{subgroups of EL coro}, $K_{\I}$ is the trivial subgroup and 
$$K_{\I \setminus  \lbrace i \rbrace} = \lbrace e_{i,i+1} (m) : m \in T \rbrace,$$
and therefore $K_{\I \setminus  \lbrace i \rbrace} \neq K_{\I}$ and condition $(\mathcal{A} 3)$ is fulfilled.
\end{proof}

\begin{corollary}
With the above notations, $X=X(\EL_{n+1} (\mathcal{R}), ( K_{\lbrace i \rbrace} )_{i \in \I})$ is a pure $n$-dimensional, $(n+1)$-partite, strongly gallery connected clique complex with no free faces. Moreover, if $R$ is finite, then $X$ is locally finite.
\end{corollary}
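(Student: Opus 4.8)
The first sentence of the corollary is an immediate combination of two results that are already in place: Theorem~\ref{geometry system for elementary mat groups thm} shows that $(\EL_{n+1}(\mathcal{R}), (K_{\lbrace i \rbrace})_{i \in \I})$ is a subgroup geometry system, and Theorem~\ref{cost geom thm} then yields at once that $X = X(\EL_{n+1}(\mathcal{R}), (K_{\lbrace i \rbrace})_{i \in \I})$ is a pure $n$-dimensional, $(n+1)$-partite, strongly gallery connected clique complex with no free faces (and that $\EL_{n+1}(\mathcal{R})$ acts on it by type-preserving automorphisms, transitively on $n$-simplices). So the only statement left to prove is local finiteness under the hypothesis that $R$ is finite, and that is where I would spend the effort.

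My plan for local finiteness is to reduce it to the finiteness of the subgroups $K_{\lbrace i \rbrace}$. First I would recall that, for a \emph{pure} complex, it suffices to check that the link of every vertex is finite: if $v$ is a vertex of a nonempty simplex $\tau$, then $X_\tau$ is obtained from $X_v$ by iterating links, so $X_\tau$ is finite once $X_v$ is. Next, by Proposition~\ref{link coset prop} the link of a vertex $g K_{\lbrace i \rbrace}$ is isomorphic to $X(K_{\lbrace i \rbrace}, (K_{\lbrace i,j \rbrace})_{j \in \I \setminus \lbrace i \rbrace})$, whose $0$-skeleton is the disjoint union of the coset spaces $K_{\lbrace i \rbrace}/K_{\lbrace i,j \rbrace}$ over $j \neq i$. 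Hence it is enough to show that each $K_{\lbrace i \rbrace}$ is a finite group.

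Finally I would invoke finiteness of $R$. The $R$-module $T$ is generated by $\lbrace 1, t_1, \dots, t_l \rbrace$, so it is finite whenever $R$ is, and likewise each $R$-span $T^d$ of the monomials of degree $\leq d$ is finite. By the explicit matrix descriptions in Lemma~\ref{K (n) lemma} and Corollary~\ref{subgroups of EL coro}, every element of $K_{\lbrace i \rbrace}$ has all of its entries lying in one of the finite sets $\lbrace 0 \rbrace$, $\lbrace 1 \rbrace$ or $T^d$ with $d \leq n$; since an $(n+1)\times(n+1)$ matrix has only finitely many entries, $K_{\lbrace i \rbrace}$ is finite. Then $X(K_{\lbrace i \rbrace}, (K_{\lbrace i,j \rbrace})_{j \neq i})$ has at most $n\,\vert K_{\lbrace i \rbrace} \vert$ vertices, so, being a clique complex on finitely many vertices, it is finite; therefore $X$ is locally finite.

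I do not anticipate a genuine obstacle here: all the substance is contained in Theorems~\ref{cost geom thm} and~\ref{geometry system for elementary mat groups thm} and in the already-available explicit description of the $K_\tau$. The one point worth stating carefully is the contrast that $R$ finite forces each ``block'' $K_{\lbrace i \rbrace}$ to be finite even though the ambient group $\EL_{n+1}(\mathcal{R})$ is typically infinite (e.g.\ for $\mathcal{R} = \mathbb{F}_q[t]$) — so it is ``bounded degree'' of the links, not finiteness of the group, that is being used.
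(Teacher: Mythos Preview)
Your proposal is correct and follows essentially the same approach as the paper: the first assertion is an immediate consequence of Theorem~\ref{geometry system for elementary mat groups thm} together with Theorem~\ref{cost geom thm}, and for local finiteness you use the explicit matrix description (Corollary~\ref{subgroups of EL coro}) to see that finiteness of $R$ forces each $K_{\lbrace i \rbrace}$ (indeed each $K_\tau$, $\tau \neq \emptyset$) to be finite, and then conclude via Proposition~\ref{link coset prop}. The only cosmetic difference is that the paper directly observes $K_\tau$ is finite for every nonempty $\tau$ rather than first reducing to vertex links, but this is the same argument.
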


\begin{proof}
The first part of the corollary follows directly from Theorem \ref{cost geom thm}. For the second part, note that if $R$ is finite, then by Corollary \ref{subgroups of EL coro}, $K_\tau$ is finite for every $\tau \subseteq \I, \tau \neq \emptyset$ and this in turn implies by Proposition \ref{link coset prop} that for every simplex $g K_\tau$ in $X$, the links $g K_\tau$ is finite.
\end{proof}

\subsection{Symmetries of the constructions}
\label{Symmetries of the constructions sect}
Here we will show that the complex $X(\EL_{n+1} (\mathcal{R}), ( K_{\lbrace i \rbrace} )_{i \in \I})$ that arises from the subgroup geometry system above has a rich symmetry group. 

We fix the following notation:  $K_{\lbrace i \rbrace}, i \in \I$ are the subgroups defined above and $X = X(\EL_{n+1} (\mathcal{R}), ( K_{\lbrace i \rbrace} )_{i \in \I})$.

By the definition of $X$, $\EL_{n+1} (\mathcal{R})$ acts transitively on top-dimensional simplices of $X$ and this action preserves the type of the simplices. Below, we will show that there is also a group $\Gamma < \Aut (\EL_{n+1} (\mathcal{R}))$ that acts on $X$ and is not type-preserving and that the transitivity properties of the action of $\EL_{n+1} (\mathcal{R}) \rtimes \Gamma$ on $X$ are, in some sense, the best that can be expected.

We recall that we showed in Proposition \ref{gamma as a permutation prop} that if $\Gamma < \Aut (\EL_{n+1} (\mathcal{R}))$ and $\Gamma$ preserves the subgroup geometry system, then there is a homomorphism $\Psi : \Gamma \rightarrow \Sym (\lbrace 0,...,n \rbrace)$ defined by $\Psi (\gamma) = \psi_\gamma$, where $\psi_\gamma$ is defined in Proposition \ref{gamma as a permutation prop}. Also, by Proposition \ref{Aut prop}, $\EL_{n+1} (\mathcal{R}) \rtimes \Gamma$ acts simplicially on $X$ by
$$(g,\gamma). g' K_{\tau} = g \gamma (g') K_{\psi_\gamma (\tau)}.$$

Below, we will see that in our construction an automorphism $\gamma$ that preserves the subgroup geometry system cannot be any permutation in $\Sym (\lbrace 0,...,n \rbrace)$:
\begin{proposition}
\label{limitations of sym prop}
Let $\gamma \in \Aut (\EL_{n+1} (\mathcal{R}))$ such that $\gamma$ preserves the subgroup geometry system. Then for every $0 \leq i,j \leq n, i \neq j$, $ (i -j) \mod (n+1)  = \pm 1$ if and only if $ (\psi_\gamma (i) - \psi_\gamma (j)) \mod (n+1)  = \pm 1$.
\end{proposition}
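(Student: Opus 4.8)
The plan is to exploit the algebraic structure of the subgroups $K_\tau$ given explicitly in Corollary~\ref{subgroups of EL coro}, in particular to isolate a group-theoretic property of the pair $K_{\{i\}}, K_{\{j\}}$ (equivalently of the subgroup $K_{\{0,\ldots,n\}\setminus\{i,j\}}$) that distinguishes the cyclically adjacent pairs from the non-adjacent ones, and which is preserved by any $\gamma$ that preserves the subgroup geometry system. Since $\psi_\gamma$ is a permutation by Proposition~\ref{gamma as a permutation prop}, and $\gamma.K_\tau = K_{\psi_\gamma(\tau)}$ for every $\tau\subseteq\I$, it suffices to find an intrinsic (automorphism-invariant) characterization of the set of pairs $\{i,j\}$ with $(i-j)\bmod(n+1)=\pm1$ in terms of the abstract group $\EL_{n+1}(\mathcal{R})$ together with its distinguished subgroups $K_{\{0\}},\ldots,K_{\{n\}}$.

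First I would look at the subgroup $K_{\I\setminus\{i,j\}}$ for a pair $\{i,j\}$. By Corollary~\ref{subgroups of EL coro}, when $(i-j)\bmod(n+1)=\pm1$ (say $j=i+1\bmod(n+1)$), the nonzero off-diagonal entries allowed in $K_{\I\setminus\{i,j\}}$ are exactly those in positions $(k,k')$ with $\{k,k+1,\ldots,k'-1\}\subseteq\{i,j\}$, i.e.\ (for $j=i+1$) positions $(i,i+1)$, $(i+1,i+2)$, and $(i,i+2)$, all indices mod $n+1$; this is a two-generator nilpotent group (a ``$3\times3$ Heisenberg-type'' subgroup sitting on the three consecutive indices). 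By contrast, when $\{i,j\}$ are non-adjacent, the positions $(k,k')$ with $\{k,\ldots,k'-1\}\subseteq\{i,j\}$ reduce to just $(i,i+1)$ and $(j,j+1)$ with \emph{no} third ``product'' position, so $K_{\I\setminus\{i,j\}}$ is isomorphic to $T\times T$, an abelian group. Thus the characterization I would use is: $(i-j)\bmod(n+1)=\pm1$ if and only if $K_{\I\setminus\{i,j\}}$ is non-abelian (assuming, as we may, that $T$ is large enough that the Steinberg commutator $e_{i,i+2}(m_1m_2)$ is nontrivial for some $m_1,m_2\in T$ — this holds whenever $\mathcal{R}\neq 0$ since $1\in T$, so $m_1=m_2=1$ already gives $e_{i,i+2}(1)\neq e$; one should double-check the degenerate case $\mathcal R$ trivial, which makes $\EL_{n+1}$ trivial and the statement vacuous).

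The argument then runs: given $\gamma$ preserving the subgroup geometry system, by Proposition~\ref{gamma as a permutation prop}(3) we have $\gamma(K_{\I\setminus\{i,j\}}) = K_{\psi_\gamma(\I\setminus\{i,j\})} = K_{\I\setminus\{\psi_\gamma(i),\psi_\gamma(j)\}}$, using that $\psi_\gamma$ is a bijection of $\I$. Since $\gamma$ is an automorphism of $\EL_{n+1}(\mathcal R)$, it carries $K_{\I\setminus\{i,j\}}$ to an isomorphic group, so $K_{\I\setminus\{i,j\}}$ is non-abelian if and only if $K_{\I\setminus\{\psi_\gamma(i),\psi_\gamma(j)\}}$ is non-abelian. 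Combined with the characterization above, this gives exactly $(i-j)\bmod(n+1)=\pm1 \iff (\psi_\gamma(i)-\psi_\gamma(j))\bmod(n+1)=\pm1$, which is the claim.

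The main obstacle I anticipate is verifying the abelian/non-abelian dichotomy cleanly from Corollary~\ref{subgroups of EL coro}, especially handling the cyclic wrap-around (the position $(n,0)$ and the modular arithmetic in $\{k,k+1,\ldots,k'-1\}$) and being careful about which ``product'' positions actually appear — one must check that for adjacent $\{i,j\}$ the commutator of the two obvious generators is genuinely nontrivial (this is where the Steinberg relation $[e_{j_1,j_2}(m_1),e_{j_2,j_3}(m_2)]=e_{j_1,j_3}(m_1m_2)$ from the proof of Lemma~\ref{K (n) lemma} is used, with $1\in T$), and that for non-adjacent $\{i,j\}$ no such third position exists so the group really is a direct product of two abelian pieces. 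A secondary subtlety is that one should argue with the genuinely cyclic indexing throughout (so that ``adjacent'' includes the pair $\{0,n\}$), which the explicit description in Corollary~\ref{subgroups of EL coro} already supports since all index arithmetic there is mod $n+1$.
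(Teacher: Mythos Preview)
Your proposal is correct and follows essentially the same route as the paper's own proof: both argue that $\gamma$ induces an isomorphism $K_{\I\setminus\{i,j\}}\cong K_{\I\setminus\{\psi_\gamma(i),\psi_\gamma(j)\}}$, and then distinguish adjacent from non-adjacent pairs by the abelian/non-abelian dichotomy (Heisenberg-type vs.\ direct product $K_{\I\setminus\{i\}}\times K_{\I\setminus\{j\}}$), using the Steinberg commutator with $1\in T$ to witness non-commutativity in the adjacent case.
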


\begin{proof}
Let $\gamma \in \Aut (\EL_{n+1} (\mathcal{R}))$ such that $\gamma$ preserves the subgroup geometry system. Then by Proposition \ref{gamma as a permutation prop}, for every  $0 \leq i,j \leq n$, $i \neq j$, $\gamma$ induces an isomorphism between $K_{\I \setminus \lbrace i,j \rbrace}$ and $\gamma. K_{\I \setminus \lbrace i,j \rbrace} = K_{\I \setminus \lbrace \psi_\gamma (i), \psi_\gamma (j) \rbrace}$. We will show that a necessary condition for $K_{\I \setminus \lbrace i,j \rbrace}$ and $K_{\I \setminus \lbrace \psi_\gamma (i),\psi_\gamma (j) \rbrace}$ to be isomorphic is that  $ (i -j) \mod (n+1)  = \pm 1$ if and only if $ (\psi_\gamma (i) - \psi_\gamma (j)) \mod (n+1)  = \pm 1$.

We observe the following: first, if $(i -j) \mod (n+1)  = \pm 1$, then Corollary \ref{subgrp of EL generation coro}, the group
$$K_{\I \setminus \lbrace i,j \rbrace} = \langle  K_{\I \setminus \lbrace i \rbrace}, K_{\I \setminus \lbrace j \rbrace} \rangle,$$
is isomorphic to the group of $3 \time 3$ matrices of the form $H = \langle e_{0,1} (m), e_{1,2} (m) : m \in T \rangle$, i.e., 
\begin{dmath*}
H={\left\lbrace \left( \begin{array}{ccc}
1 & b_{0,1} & b_{0,2} \\
0 & 1 & b_{1,2} \\
0 & 0 & 1
\end{array} \right) : b_{0,1}, b_{1,2} \in T, b_{0,2} \in T^2 \right\rbrace.}
\end{dmath*}
We note that this group is not Abelian: for instance, $[e_{0,1} (1), e_{1,2} (1)] = e_{0,2} (1)$ (by definition, $1 \in T$). 
Second, if $ (i -j) \mod (n+1) \neq \pm 1$, then by the Corollary \ref{subgroups of EL coro}, the groups $K_{\I \setminus \lbrace i \rbrace}$ and $K_{\I \setminus \lbrace j \rbrace}$ are commuting Abelian groups and
$$K_{\I \setminus \lbrace i,j \rbrace} = \langle  K_{\I \setminus \lbrace i \rbrace}, K_{\I \setminus \lbrace j \rbrace} \rangle \cong K_{\I \setminus \lbrace i \rbrace} \times K_{\I \setminus \lbrace j \rbrace}.$$
This an an Abelian group and therefore cannot be isomorphic to $H$.

Therefore $K_{\I \setminus \lbrace i,j \rbrace} \cong K_{\I \setminus \lbrace \psi_\gamma (i), \psi_\gamma (j) \rbrace}$ implies that $ (i -j) \mod (n+1)  = \pm 1$ if and only if $ (\psi_\gamma (i) - \psi_\gamma (j)) \mod (n+1)  = \pm 1$.
\end{proof}

The above proposition leads us to recall the definition of the dihedral group:
\begin{definition}[The Dihedral group]
\label{dihedral group def}
The Dihedral group $D_{n+1}$ is the symmetry group of the regular $(n+1)$-gon. This group consist of $2(n+1)$ elements: $(n+1)$ rotations and $(n+1)$ reflections and can be written explicitly as
$$D_{n+1} = \langle r, s \vert r^{n+1} = s^2 = 1, srs=r^{-1} \rangle.$$  Equivalently, $D_{n+1}$ is the subgroup of the permutation group $\Sym ( \lbrace 0,...,n\rbrace)$ that fulfill the conditions of the above Proposition, i.e., $D_{n+1}$ is the subgroup of $\Sym ( \lbrace 0,...,n\rbrace)$ that consists of all the elements $\rho \in \Sym ( \lbrace 0,...,n\rbrace)$ for which $ (i -j) \mod (n+1)  = \pm 1$ if and only if $ (\rho (i) - \rho (j)) \mod (n+1)  = \pm 1$. Under this description, we can take the generators of $D_{n+1}$ to be $r,s \in \Sym ( \lbrace 0,...,n\rbrace)$, where $r$ is defined by $r(i)=i+1 \mod (n+1)$ and $s$ is defined by $s(i)=n-1-i \mod (n+1)$.
\end{definition}

A corollary of the above Proposition is:
\begin{corollary}
Let $K_{\I \setminus \lbrace 0 \rbrace},...,K_{\I \setminus \lbrace n \rbrace}$ as above and let $\Gamma < \Aut (\EL_{n+1} (\mathcal{R}))$ that preserves $K_{\I \setminus \lbrace 0 \rbrace},...,K_{\I \setminus \lbrace n \rbrace}$. Then $\Psi (\Gamma) \subseteq D_{n+1}$, where $\Psi : \Gamma \rightarrow \Sym (\lbrace 0,...,n \rbrace)$ is the homomorphism defined in Proposition \ref{gamma as a permutation prop}.
\end{corollary}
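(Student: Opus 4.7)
The plan is to derive this as an immediate consequence of Proposition \ref{limitations of sym prop} together with the alternative characterization of the dihedral group given in Definition \ref{dihedral group def}. First I would observe that the hypothesis of the corollary is equivalent to the hypothesis of Proposition \ref{limitations of sym prop}: preserving the collection $K_{\I \setminus \lbrace 0 \rbrace}, \ldots, K_{\I \setminus \lbrace n \rbrace}$ is the same as preserving $K_{\lbrace 0 \rbrace}, \ldots, K_{\lbrace n \rbrace}$, because by axiom $(\mathcal{A} 1)$ applied iteratively one has $K_{\lbrace i \rbrace} = \langle K_{\I \setminus \lbrace j \rbrace} : j \neq i \rangle$, while conversely $K_{\I \setminus \lbrace i \rbrace} = \bigcap_{j \neq i} K_{\lbrace j \rbrace}$. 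In particular, the homomorphism $\Psi \colon \Gamma \to \Sym(\lbrace 0,\ldots, n \rbrace)$ of Proposition \ref{gamma as a permutation prop} is well defined for every $\gamma \in \Gamma$.

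Next, for an arbitrary $\gamma \in \Gamma$ I would simply invoke Proposition \ref{limitations of sym prop}, which yields that $\psi_\gamma \in \Sym(\lbrace 0, \ldots, n \rbrace)$ satisfies $(i-j) \bmod (n+1) = \pm 1$ if and only if $(\psi_\gamma(i) - \psi_\gamma(j)) \bmod (n+1) = \pm 1$. This is word-for-word the second (combinatorial) description of $D_{n+1}$ given in Definition \ref{dihedral group def} as the subgroup of $\Sym(\lbrace 0, \ldots, n \rbrace)$ consisting of all permutations preserving the cyclic-adjacency relation on $\lbrace 0, \ldots, n\rbrace$. Hence $\psi_\gamma \in D_{n+1}$, and since $\gamma \in \Gamma$ was arbitrary, $\Psi(\Gamma) \subseteq D_{n+1}$.

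I do not expect a real obstacle: the nontrivial content lies entirely in Proposition \ref{limitations of sym prop}, whose proof distinguishes consecutive from non-consecutive index pairs $(i,j)$ by comparing the non-abelian, Heisenberg-type structure of $\langle K_{\I \setminus \lbrace i \rbrace}, K_{\I \setminus \lbrace j \rbrace} \rangle$ in the consecutive case (witnessed by the Steinberg relation $[e_{i,i+1}(1), e_{i+1,i+2}(1)] = e_{i,i+2}(1)$) against the abelian direct product appearing in the non-consecutive case. The corollary is then a pure translation of this conclusion through the standard description of $D_{n+1}$ as the symmetry group of the cycle on $n+1$ vertices, so the write-up should be only a few lines.
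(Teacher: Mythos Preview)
Your proposal is correct and follows exactly the route the paper intends: the paper presents this corollary without proof, immediately after Proposition~\ref{limitations of sym prop} and Definition~\ref{dihedral group def}, as a direct consequence of those two statements. Your additional observation that preserving $\{K_{\I\setminus\{i\}}\}_i$ is equivalent to preserving $\{K_{\{i\}}\}_i$ (via $K_{\{i\}}=\langle K_{\I\setminus\{j\}}:j\neq i\rangle$ and $K_{\I\setminus\{i\}}=\bigcap_{j\neq i}K_{\{j\}}$) cleanly reconciles the hypothesis of the corollary with that of Proposition~\ref{limitations of sym prop}, a point the paper leaves implicit.
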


Below, we will construct a group $\Gamma$ that preserves $K_{\I \setminus \lbrace 0 \rbrace},...,K_{\I \setminus \lbrace n \rbrace}$ such that $\Psi (\Gamma) = D_{n+1}$. In order to construct this group, we recall some basic facts regarding the automorphism group of $EL_{n+1} (\mathcal{R})$. These facts can be summarized as follows: $\Aut (EL_{n+1} (\mathcal{R}))$ contains a subgroup that arises from permutation of the indices and the inverse-transpose involution. Below, we will explain this statement.

\paragraph{Permutations.} Let $\rho \in \Sym (\lbrace 0,...,n \rbrace)$ be a permutation. We recall that $\rho$ defines the following automorphism $\gamma_\rho$ on the group $EL_{n+1} (\mathcal{R})$: for any matrix $A \in EL_{n+1} (\mathcal{R})$, $\gamma_\rho (A)$ is the matrix defined as $(\gamma_\rho (A)) (i,j) = A (\rho^{-1} (i), \rho^{-1} (j))$. In order to verify that this is in fact an automorphism, we note that $\gamma_\rho (A) = P_{\rho}^{-1} A P_{\rho}$, where $P_{\rho}$ is the permutation matrix defined as
$$P_\rho (i,j) = \begin{cases}
1 & \rho (i) =j\\
0 & \text{otherwise}
\end{cases}.$$
We note that by this definition the group of permutations $\Sym (\lbrace 0,...,n \rbrace)$ is a subgroup of $\Aut (EL_{n+1} (\mathcal{R}))$: for every $\rho_1, \rho_2 \in \Sym (\lbrace 0,...,n \rbrace)$ and every $A \in EL_{n+1} (\mathcal{R})$, $(\gamma_{\rho_2} \gamma_{\rho_1}) (A) = \gamma_{\rho_2} (\gamma_{\rho_1} (A))$.

\paragraph{Inverse-transpose.} For any matrix $A \in EL_{n+1} (\mathcal{R})$, define $\iota (A) = (A^{-1})^t$. One can verify that $\iota (A) \in EL_{n+1} (\mathcal{R})$ since both the action of inverse and the action of transpose send elementary matrices to elementary matrices. Also, $\iota$ is and automorphism, because for every two matrices $A,B \in EL_{n+1} (\mathcal{R})$,
$$\iota (AB) = ((AB)^{-1})^{t} = (B^{-1} A^{-1})^t = (A^{-1})^t (B^{-1})^t = \iota (A) \iota (B).$$
Last, we note that $\iota^2 = e$.

With the above automorphisms at hand, we will construct a subgroup $\Gamma$ of $\Aut (EL_{n+1} (\mathcal{R}))$ that preserves subgroup geometry system defined above. Denote:
\begin{enumerate}
\item $\rho_{cyc} \in \Sym ( \lbrace 0,...,n\rbrace)$ is the cyclic permutation defined as $\rho_{cyc} (i) = i+1 \mod (n+1)$.
\item $\rho_{rev} \in \Sym ( \lbrace 0,...,n\rbrace)$ is the permutation defined as $\rho (i) = n-i \mod (n+1)$.
\end{enumerate}

Next, we are ready to prove the second part of Theorem \ref{geometry system for elementary mat groups thm - intro} stated in the introduction:
\begin{theorem}
\label{symmetry group onto dihedral thm}
Let $K_{\lbrace i \rbrace}, i=0,...,n$ be as above. Denote $\gamma_{r}, \gamma_{s} \in \Aut (G)$ to be the automorphisms defined as $\gamma_{r} = \gamma_{\rho_{cyc}}, \gamma_{s} = \iota \gamma_{\rho_{rev}}$.
Let $\Gamma < \Aut (\EL_{n+1} (\mathcal{R}))$ be the subgroup generated by $\gamma_{r}, \gamma_{s}$. Then $\Gamma$ preserves the subgroup geometry system and $\Psi (\Gamma) = D_{n+1}$.
\end{theorem}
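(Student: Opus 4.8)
The plan is to verify three things: (i) $\gamma_r$ and $\gamma_s$ really are automorphisms of $\EL_{n+1}(\mathcal R)$ that preserve the subgroup geometry system; (ii) the induced permutations $\psi_{\gamma_r},\psi_{\gamma_s}$ are exactly the standard generators $r,s$ of $D_{n+1}$ from Definition~\ref{dihedral group def}, whence $\Psi(\Gamma)\supseteq D_{n+1}$; and (iii) combine with the Corollary above (which gives $\Psi(\Gamma)\subseteq D_{n+1}$) to conclude $\Psi(\Gamma)=D_{n+1}$. The fact that $\gamma_r,\gamma_s\in\Aut(\EL_{n+1}(\mathcal R))$ is already recorded in the preceding discussion (permutation conjugation and inverse-transpose both send elementary matrices to elementary matrices). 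So the real content is tracking what these automorphisms do to the generating elementary matrices $e_{j,j+1}(m)$, $m\in T$, and hence to the subgroups $K_{\lbrace i\rbrace}$.

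First I would compute $\gamma_r$ on generators. Conjugation by the permutation matrix $P_{\rho_{cyc}}$ sends $e_{j,k}(m)$ to $e_{\rho_{cyc}(j),\rho_{cyc}(k)}(m)$, so $\gamma_r\bigl(e_{j,j+1}(m)\bigr)=e_{j+1,j+2}(m)$ with indices mod $n+1$. Using Corollary~\ref{subgrp of EL generation coro}, $K_{\lbrace i\rbrace}=\langle e_{j,j+1}(m): j\neq i, m\in T\rangle$, and applying $\gamma_r$ to this generating set gives $\langle e_{j',j'+1}(m): j'\neq i+1\rangle=K_{\lbrace i+1\rbrace}$ (indices mod $n+1$). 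Hence $\gamma_r$ preserves the system and $\psi_{\gamma_r}(i)=i+1\bmod(n+1)$, i.e.\ $\psi_{\gamma_r}=r$. For $\gamma_s=\iota\,\gamma_{\rho_{rev}}$ the computation is slightly more delicate: $\gamma_{\rho_{rev}}$ sends $e_{j,j+1}(m)$ to $e_{n-j,\,n-j-1}(m)$ (a lower, rather than upper, elementary matrix), and then $\iota(A)=(A^{-1})^t$ sends $e_{k,l}(m)$ to $e_{l,k}(-m)$ since $e_{k,l}(m)^{-1}=e_{k,l}(-m)$ and transposing swaps the indices. Composing, $\gamma_s\bigl(e_{j,j+1}(m)\bigr)=e_{n-j-1,\,n-j}(-m)$. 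Because $T=-T$ (it is an $R$-module), the image of the generating set of $K_{\lbrace i\rbrace}$ under $\gamma_s$ is $\langle e_{j',j'+1}(m'): j'=n-j-1,\ j\neq i\rangle = K_{\lbrace n-1-i\rbrace}$ (indices mod $n+1$), so $\gamma_s$ preserves the system and $\psi_{\gamma_s}(i)=n-1-i\bmod(n+1)$, i.e.\ $\psi_{\gamma_s}=s$.

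Having shown $\gamma_r,\gamma_s$ preserve the subgroup geometry system, $\Gamma=\langle\gamma_r,\gamma_s\rangle$ does too, so Proposition~\ref{gamma as a permutation prop} gives the homomorphism $\Psi\colon\Gamma\to\Sym(\lbrace0,\dots,n\rbrace)$ with $\Psi(\gamma_r)=r$, $\Psi(\gamma_s)=s$. Since $r,s$ generate $D_{n+1}$ by Definition~\ref{dihedral group def}, $\Psi(\Gamma)\supseteq D_{n+1}$; and the Corollary preceding the theorem gives $\Psi(\Gamma)\subseteq D_{n+1}$. Therefore $\Psi(\Gamma)=D_{n+1}$, completing the proof. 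The only step that requires genuine care — and the one I'd expect to be the main obstacle — is the inverse-transpose bookkeeping for $\gamma_s$: one must get right both the sign on $m$ (harmless since $T$ is a module, but it needs to be said) and, more importantly, the index arithmetic, checking that $j\mapsto n-j$ followed by the transpose-induced swap of a length-one index gap really lands on the generating matrix $e_{n-1-j,\,n-j}$ and not, say, $e_{n-j,\,n+1-j}$; it is easy to be off by one here when the wrap-around entry $e_{n,0}$ is involved, so I would treat that boundary index explicitly.
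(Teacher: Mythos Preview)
Your proposal is correct and follows essentially the same route as the paper: compute the action of $\gamma_r$ and $\gamma_s$ on the elementary generators $e_{j,j+1}(m)$, deduce that $\gamma_r(K_{\lbrace i\rbrace})=K_{\lbrace i+1\rbrace}$ and $\gamma_s(K_{\lbrace i\rbrace})=K_{\lbrace n-1-i\rbrace}$, and conclude $\Psi(\gamma_r)=r$, $\Psi(\gamma_s)=s$. One small remark: your appeal to the preceding Corollary for the containment $\Psi(\Gamma)\subseteq D_{n+1}$ is unnecessary, since $\Gamma=\langle\gamma_r,\gamma_s\rangle$ and $\Psi$ is a homomorphism already give $\Psi(\Gamma)=\langle r,s\rangle=D_{n+1}$ directly.
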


\begin{proof}
We will prove the theorem by showing that $\gamma_r, \gamma_s$ preserve subgroup  $K_{\lbrace i \rbrace}, i=0,...,n$ and that $\Psi (\gamma_r) = r, \Psi (\gamma_s) =s$, where $r,s \in \Sym (\lbrace 0,...,n \rbrace)$ are the generators of the dihedral group defined in Definition \ref{dihedral group def} above.

For $j=0,...,n$ and $m \in T$, $\gamma_{r}.e_{j,j+1} (m) = e_{j+1,j+2} (m)$ (all the indices are taken $\mod (n+1)$). Recall that for every $i =0,...,n$, 
$$K_{\lbrace i \rbrace} = \langle e_{j,j+1} (m) : j \in \I \setminus \lbrace i \rbrace, m \in T \rangle,$$
and therefore, for every $i$,
\begin{dmath*}
\gamma_{r}.K_{\lbrace i \rbrace} = {\langle e_{j+1,j+2} (m) : j \in \I \setminus \lbrace i \rbrace, m \in T \rangle =}\\
{\langle e_{j,j+1} (m) : j \in \I \setminus \lbrace i+1 \rbrace, m \in T \rangle = K_{\lbrace i+1 \rbrace}.}
\end{dmath*}
As a result, $\gamma_{r}$ preserves the subgroup geometry and $\Psi (\gamma_{r}) = r \in D_{n+1}$.

Also, $j=0,...,n$ and $m \in T$, 
\begin{dmath*}
\gamma_{s}.e_{j,j+1} (m) = (\iota \gamma_{\rho_{rev}}).(e_{j,j+1} (m)) = \iota.(e_{n-j,n-j-1} (m))= ((e_{n-j,n-j-1} (m))^{-1})^t =
(e_{n-j,n-j-1} (-m))^t = e_{n-j-1,n-j} (-m).
\end{dmath*}
Therefore, for every $i=0,...,n$,
\begin{dmath*}
\gamma_{s}.K_{\lbrace i \rbrace} = {\langle e_{n-j-1,n-j} (-m) : j \in \I \setminus \lbrace i \rbrace, m \in T \rangle =}\\
{\langle e_{j,j+1} (m) : j \in \I \setminus \lbrace n-1-i \rbrace, m \in T \rangle = K_{\lbrace  n-1-i \rbrace}.}
\end{dmath*}
As a result, $\gamma_{s}$ preserves the subgroup geometry and $\Psi (\gamma_{s}) = s \in D_{n+1}$.
\end{proof}

The above Theorem gives rise to the following transitivity property of $\Aut (X)$:
\begin{corollary}
\label{transitive action coro 1}
Let $X= X(\EL_{n+1} (\mathcal{R}), ( K_{\lbrace i \rbrace} )_{i \in \I})$. For every $0 \leq k \leq n$, every $\tau, \tau' \subseteq \I $ with $\vert \tau \vert = \vert \tau ' \vert = k+1$ and every $g, g' \in \EL_{n+1} (\mathcal{R})$, if there is $\rho \in D_{n+1}$ such that $\rho (\tau) = \tau'$, then there is a simplicial automorphism in $\Aut (X)$ that sends $g K_{\tau}$ to $g' K_{\tau '}$ (recall that $g K_{\tau}$, $g' K_{\tau '}$ are identified with $k$-dimensional simplices in $X$).
\end{corollary}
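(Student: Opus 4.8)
The plan is to combine the two group actions on $X$ that we already have available. On one hand, Theorem~\ref{cost geom thm} tells us that $\EL_{n+1}(\mathcal{R})$ acts on $X$ by type-preserving automorphisms, transitively on top-dimensional simplices, and hence (as in the proof of Proposition~\ref{different descrip prop}) transitively on simplices of each fixed type $\tau$; concretely the element $h\in\EL_{n+1}(\mathcal{R})$ sends $gK_\tau$ to $hgK_\tau$. On the other hand, Theorem~\ref{symmetry group onto dihedral thm} produces a subgroup $\Gamma<\Aut(\EL_{n+1}(\mathcal{R}))$ with $\Psi(\Gamma)=D_{n+1}$, and Proposition~\ref{Aut prop} tells us that $\EL_{n+1}(\mathcal{R})\rtimes\Gamma$ acts simplicially on $X$ via $(g,\gamma).g'K_\tau = g\gamma(g')K_{\psi_\gamma(\tau)}$. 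So the ``type-rotating/inverting'' part of $\Aut(X)$ realizes exactly the permutations in $D_{n+1}$ on the set of types.

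The argument itself is then a two-step chase. First I would handle the type: given $\rho\in D_{n+1}$ with $\rho(\tau)=\tau'$, pick $\gamma\in\Gamma$ with $\psi_\gamma=\rho$ (possible since $\Psi(\Gamma)=D_{n+1}$), and consider the automorphism $\alpha=(e,\gamma)\in\EL_{n+1}(\mathcal{R})\rtimes\Gamma<\Aut(X)$. By Proposition~\ref{Aut prop}, $\alpha$ sends $gK_\tau$ to $\gamma(g)K_{\psi_\gamma(\tau)}=\gamma(g)K_{\tau'}$, a simplex of type $\tau'$. Second I would correct the coset representative: both $\gamma(g)K_{\tau'}$ and $g'K_{\tau'}$ are simplices of the same type $\tau'$, so by the transitivity of the $\EL_{n+1}(\mathcal{R})$-action on type-$\tau'$ simplices there is $h\in\EL_{n+1}(\mathcal{R})$ with $h\gamma(g)K_{\tau'}=g'K_{\tau'}$ — explicitly one can take $h=g'\gamma(g)^{-1}$, since $g'\gamma(g)^{-1}\cdot\gamma(g)K_{\tau'}=g'K_{\tau'}$. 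Then the composite automorphism $(h,e)\circ(e,\gamma)=(h,\gamma)\in\Aut(X)$ sends $gK_\tau$ to $g'K_{\tau'}$, which is exactly what is claimed.

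I do not expect any serious obstacle here; the statement is essentially a packaging of the transitivity of $\EL_{n+1}(\mathcal{R})$ on fixed-type simplices together with the surjectivity $\Psi(\Gamma)=D_{n+1}$. The one point that deserves a sentence of care is that after applying $\gamma$ the resulting simplex really does have type $\tau'=\rho(\tau)$ and not merely some type of the same cardinality — this is precisely the content of part~(3) of Proposition~\ref{gamma as a permutation prop} (namely $\gamma.K_\tau=K_{\psi_\gamma(\tau)}$), combined with the fact that in the coset identification of Remark~\ref{different descrip remark} the type of the simplex $g'K_\eta$ is $\eta$. Everything else is formal composition inside $\Aut(X)$, using that both $\EL_{n+1}(\mathcal{R})$ (via the left-multiplication action) and $\Gamma$ (via Proposition~\ref{Aut prop}) sit inside $\Aut(X)$.
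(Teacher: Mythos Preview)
Your proposal is correct and is essentially the paper's own proof: the paper directly writes down the single element $(g'(\gamma(g))^{-1},\gamma)\in\EL_{n+1}(\mathcal{R})\rtimes\Gamma$ and checks that it sends $gK_\tau$ to $g'K_{\tau'}$, which is exactly the composite $(h,e)\circ(e,\gamma)=(h,\gamma)$ with $h=g'\gamma(g)^{-1}$ that you construct in two steps.
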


\begin{proof}
Let $\tau, \tau', g, g'$ as above and let $\rho \in D_{n+1}$ such that $\rho (\tau) = \tau'$. By Theorem \ref{symmetry group onto dihedral thm}, there is $\gamma \in \Aut (\EL_{n+1} (\mathcal{R}))$ that preserves the subgroup geometry system such that $\psi_\gamma = \rho$. Then by Proposition \ref{Aut prop}, $( g' (\gamma (g))^{-1}, \gamma) \in G \rtimes \Gamma$ is in $\Aut (X)$ and
$$(g' (\gamma (g))^{-1} , \gamma).g K_{\tau} = g' (\gamma (g))^{-1} \gamma (g) K_{\psi_\gamma (\tau)} = g' K_{\tau '},$$
as needed.
\end{proof}

An immediate corollary of the above corollary is:
\begin{corollary}
\label{transitive action coro 2}
Let $X= X(\EL_{n+1} (\mathcal{R}), ( K_{\lbrace i \rbrace} )_{i \in \I})$. Then $\Aut (X)$ acts transitively on:
\begin{enumerate}
\item Top-dimensional simplices of $X$, i.e., on $X (n)$.
\item Vertices of $X$, i.e., on $X (0)$.
\item Simplices of co-dimension $1$ of $X$, i.e., on $X (n-1)$.
\end{enumerate}
\end{corollary}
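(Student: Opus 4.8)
The plan is to derive all three transitivity statements directly from Corollary \ref{transitive action coro 1}, reducing each one to the single elementary observation that the dihedral group $D_{n+1}$ acts transitively on the index set $\lbrace 0,\dots,n \rbrace$. This transitivity on indices is immediate, since by Definition \ref{dihedral group def} the generator $r \in D_{n+1}$ is the rotation $r(i) = i+1 \bmod (n+1)$, whose powers already move any index to any other.

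For part (1), a top-dimensional simplex of $X$ is (under the identification of Remark \ref{different descrip remark}) a coset $g K_{\I}$ with $\tau = \I$. Given two such simplices $g K_{\I}$ and $g' K_{\I}$, I take $\rho$ to be the identity permutation, which lies in $D_{n+1}$ and fixes $\I$, and apply Corollary \ref{transitive action coro 1}; alternatively, this is simply the transitivity of the $\EL_{n+1}(\mathcal{R})$-action on $X(n)$ already furnished by Theorem \ref{cost geom thm}. For part (2), a vertex is a coset $g K_{\lbrace i \rbrace}$ with $\tau = \lbrace i \rbrace$ of size $1$; given vertices $g K_{\lbrace i \rbrace}$ and $g' K_{\lbrace j \rbrace}$, I choose $\rho \in D_{n+1}$ with $\rho(i) = j$ (possible by the transitivity on indices), so that $\rho(\lbrace i \rbrace) = \lbrace j \rbrace$, and Corollary \ref{transitive action coro 1} yields the desired automorphism. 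For part (3), a codimension-$1$ simplex is a coset $g K_{\I \setminus \lbrace i \rbrace}$ with $\tau = \I \setminus \lbrace i \rbrace$ of size $n$; since any permutation $\rho$ satisfies $\rho(\I \setminus \lbrace i \rbrace) = \I \setminus \lbrace \rho(i) \rbrace$, the requirement $\rho(\I \setminus \lbrace i \rbrace) = \I \setminus \lbrace j \rbrace$ is again just $\rho(i) = j$, and the same choice of $\rho \in D_{n+1}$ and appeal to Corollary \ref{transitive action coro 1} completes the argument.

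I do not expect any genuine obstacle here: the corollary is essentially a bookkeeping consequence of the two previous results. The one point worth stating carefully is that, although $D_{n+1}$ is a \emph{proper} subgroup of $\Sym(\lbrace 0,\dots,n\rbrace)$ for $n \geq 2$ — and hence $\Aut(X)$ need not act transitively on simplices of intermediate dimension, since $D_{n+1}$ does not act transitively on all $(k+1)$-subsets of $\I$ — it does act transitively on singletons and on their complements, which is exactly what parts (2) and (3) require.
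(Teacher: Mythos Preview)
Your proposal is correct and matches the paper's approach exactly: the paper presents this result as ``an immediate corollary of the above corollary'' (i.e., of Corollary \ref{transitive action coro 1}) and gives no further proof. Your write-up simply spells out the intended one-line argument, namely that $D_{n+1}$ acts transitively on singletons of $\I$ (and hence on their complements), which is precisely the content the paper leaves implicit.
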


Also, in the case $n=2$, the following can be deduced:
\begin{corollary}
Let $X= X(\EL_{3} (\mathcal{R}), ( K_{\lbrace i \rbrace} )_{i \in \I})$. Then for every vertex $v$ in $X$, the stabilizer of $v$ in $\Aut (X)$ acts transitively on the edges containing $v$.
\end{corollary}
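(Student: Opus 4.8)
The statement to prove is that for $X = X(\EL_3(\mathcal{R}), (K_{\lbrace i \rbrace})_{i \in \I})$, the stabilizer in $\Aut(X)$ of any vertex $v$ acts transitively on the edges containing $v$. The plan is to combine the vertex-transitivity already established (Corollary \ref{transitive action coro 2}) with a careful bookkeeping of \emph{which} edges are attached to a fixed vertex, and then use the dihedral symmetry from Theorem \ref{symmetry group onto dihedral thm} together with the ``internal'' symmetries coming from the group $G = \EL_3(\mathcal{R})$ itself.

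First I would reduce to a concrete base vertex. By vertex-transitivity it suffices to treat one vertex, say $v = K_{\lbrace 0 \rbrace} \in X(0)$; any edge through $v$ has the form $\lbrace K_{\lbrace 0 \rbrace}, g K_{\lbrace j \rbrace} \rbrace$ with $j \in \lbrace 1, 2 \rbrace$ and $g \in G$ such that $K_{\lbrace 0 \rbrace} \cap g K_{\lbrace j \rbrace} \ne \emptyset$, i.e. (after translating by an element of $K_{\lbrace 0 \rbrace}$) we may assume $g \in K_{\lbrace 0 \rbrace}$, so in the description via cosets $g K_{\lbrace j \rbrace}$ with $g \in K_{\lbrace 0 \rbrace}$. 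Under the identification of Proposition \ref{different descrip prop}, such an edge corresponds to the coset $g K_{\lbrace 0, j \rbrace}$ with $g \in K_{\lbrace 0 \rbrace}$, and the link $X_{K_{\lbrace 0 \rbrace}}$ is (by Proposition \ref{link coset prop}) isomorphic to $X(K_{\lbrace 0 \rbrace}, (K_{\lbrace 0, j \rbrace})_{j \in \lbrace 1,2 \rbrace})$, a bipartite graph on which $K_{\lbrace 0 \rbrace}$ acts. So the edges through $v$ are exactly the vertices of this bipartite link, split into two sides according to $j = 1$ and $j = 2$. The stabilizer of $v$ in $\Aut(X)$ contains $K_{\lbrace 0 \rbrace}$, which (acting by left multiplication) is transitive on each of the two sides separately: the $j$-side is the coset space $K_{\lbrace 0 \rbrace} / K_{\lbrace 0, j \rbrace}$. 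Thus it remains only to find a single automorphism fixing $v$ and swapping the two sides, i.e. swapping types $\lbrace 0,1 \rbrace$ and $\lbrace 0,2 \rbrace$.

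The key step is to produce this type-swapping automorphism fixing $v = K_{\lbrace 0 \rbrace}$. Here I would invoke Theorem \ref{symmetry group onto dihedral thm}: $\Psi(\Gamma) = D_3$, the full dihedral group on $\lbrace 0,1,2 \rbrace$, which for $n = 2$ is all of $\Sym(\lbrace 0,1,2 \rbrace)$. In particular the transposition $(1\,2)$ fixing $0$ lies in $D_3$, so there is $\gamma \in \Gamma < \Aut(G)$ with $\psi_\gamma = (1\,2)$; by Proposition \ref{Aut prop}, $(e, \gamma) \in G \rtimes \Gamma$ acts on $X$, sends $K_{\lbrace 0 \rbrace} = e K_{\lbrace 0 \rbrace}$ to $\gamma(e) K_{\psi_\gamma(\lbrace 0 \rbrace)} = K_{\lbrace 0 \rbrace}$ (it fixes $v$), and maps a coset $g K_{\lbrace 0, 1 \rbrace}$ to $\gamma(g) K_{\lbrace 0, 2 \rbrace}$, hence swaps the two sides of the link. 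Combining: given two edges $e_1, e_2$ through $v$, first apply $(e,\gamma)$ if needed so that both have the same type, then apply a suitable element of $K_{\lbrace 0 \rbrace} < \Stab_{\Aut(X)}(v)$ to carry one to the other; the composite is the desired automorphism. Finally I would transport this back to an arbitrary vertex $v'$ via the vertex-transitivity of $\Aut(X)$: conjugating $\Stab(v)$ by an automorphism sending $v$ to $v'$ gives an edge-transitive action of $\Stab(v')$ on edges through $v'$.

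The main obstacle is really just confirming that the dihedral symmetry does what is needed at the level of the \emph{link as a bipartite graph} — one has to check that the automorphism $(e,\gamma)$ with $\psi_\gamma = (1\,2)$ genuinely induces a graph isomorphism of $X_{K_{\lbrace 0 \rbrace}}$ interchanging the two color classes and fixing no issues with well-definedness of the coset identification; this is a direct consequence of Propositions \ref{gamma as a permutation prop}, \ref{Aut prop} and \ref{link coset prop}, but it is worth spelling out the equivariance $f(g' \cdot g K_\tau) = \ldots$ explicitly. A secondary point is that one should note that for $n=2$ the statement is clean precisely because $D_3 = \Sym(\lbrace 0,1,2 \rbrace)$, so \emph{every} permutation of the three types is realized; for larger $n$ the analogous statement about links would require only a transposition fixing the chosen type to lie in $D_{n+1}$, which is not always the case, and this is why the corollary is stated only for $n = 2$.
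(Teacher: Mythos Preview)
Your proposal is correct and follows essentially the same approach as the paper: reduce by vertex-transitivity to a single base vertex, use the subgroup $K_{\lbrace i \rbrace}$ stabilizing that vertex to get transitivity on each of the two edge-types separately, and then use a dihedral element fixing that type to swap the two sides. The only cosmetic difference is that the paper picks the base vertex $K_{\lbrace 2 \rbrace}$ and uses the explicit automorphism $\gamma_s$ (whose associated permutation $s$ fixes $2$ and swaps $0,1$), whereas you pick $K_{\lbrace 0 \rbrace}$ and invoke $\Psi(\Gamma)=D_3=\Sym(\lbrace 0,1,2\rbrace)$ to produce a $\gamma$ with $\psi_\gamma=(1\,2)$; both choices work for the same reason.
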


\begin{proof}
For $n =2$, we note that $D_3 =  \Sym ( \lbrace 0,...,2\rbrace)$, thus in this case Theorem \ref{symmetry group onto dihedral thm} implies that $\Gamma < \Aut (\EL_{3} (\mathcal{R})), \Gamma = \langle \gamma_{r}, \gamma_{s} \rangle$ preserves the subgroup geometry system and includes all possible permutations on the subgroups $K_{\lbrace 0 \rbrace}, K_{\lbrace 1\rbrace},K_{\lbrace 2 \rbrace}$.

More explicitly, by symmetry it is enough to prove the assertion stated above for the vertex labeled $K_{\lbrace 2 \rbrace}$. For this vertex, the subgroup $K_{\lbrace 2 \rbrace}$ fixes the vertex $K_{\lbrace 2 \rbrace}$ and acts transitively on the edges of a fixed type, i.e., it acts transitively on edges of type $\lbrace 0 \rbrace$ and on edges of type $\lbrace 1 \rbrace$. Also, $\gamma_s$ defined above fixes the vertex $K_{\lbrace 2 \rbrace}$ and maps $K_{\lbrace 0 \rbrace}$ to $K_{\lbrace 1 \rbrace}$ and vice-versa. Therefore the subgroup $K_{\lbrace 2 \rbrace} \rtimes \lbrace e, \gamma_s\rbrace < G \rtimes \Gamma$ fixes the vertex $K_{\lbrace 2 \rbrace}$ and acts transitively on the edges attached to this vertex.
\end{proof}

While the above discussion concentrated on non-oriented simplices, Theorem \ref{symmetry group onto dihedral thm} has similar consequences for oriented simplices. We will explain this statement, but leave the proofs to the reader. For $0 \leq k \leq n$, let $\vec{\I} (k)$ denote oriented $k$-dimensional simplices of $\I$, i.e.,
$$\vec{\I} (k) = \lbrace (i_0,...,i_k) : \lbrace i_0,...,i_k \rbrace \in \I (k) \rbrace.$$
Also, let $\vec{X} (k)$ denote oriented $k$-dimensional simplices, i.e.,
$$\vec{X} (k) = \lbrace (v_0,...,v_k) : \lbrace v_0,...,v_k \rbrace \in X (k) \rbrace.$$
As in the non-oriented case, when $X$ arises from a subgroup geometry system $(G, (K_{\lbrace i \rbrace})_{i \in \I})$, $\vec{X} (k)$ can be canonically identified with a set of cosets, explicitly,
$$\vec{X} (k) = \lbrace g K_{\vec{\tau}} : g \in G,  \vec{\tau} \in \vec{\I} (k) \rbrace.$$
A corollary of Theorem \ref{symmetry group onto dihedral thm} (which is just an adaptation of Corollary \ref{transitive action coro 1} the oriented setting) is:
\begin{corollary}
\label{transitive oriented action coro 1}
Let $X= X(\EL_{n+1} (\mathcal{R}), ( K_{\lbrace i \rbrace} )_{i \in \I})$. For every $0 \leq k \leq n$, every $\vec{\tau}, \vec{\tau} ' \in \vec{\I} (k)$ and every $g, g' \in \EL_{n+1} (\mathcal{R})$, if there is $\rho \in D_{n+1}$ such that $\rho (\vec{\tau}) = \vec{\tau} '$, then there is a simplicial automorphism in $\Aut (X)$ that sends $g K_{\vec{\tau}}$ to $g' K_{\vec{\tau} '}$.
\end{corollary}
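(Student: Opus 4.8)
The plan is to transcribe the proof of Corollary~\ref{transitive action coro 1} to the oriented setting, the only additional work being to check that passing from unordered to ordered types does not disturb the coset bookkeeping. First I would make precise the identification $\vec{X}(k) = \lbrace g K_{\vec{\tau}} : g \in G, \vec{\tau} \in \vec{\I}(k)\rbrace$ asserted just before the statement: for $\vec{\tau} = (i_0,\dots,i_k)$ one puts $K_{\vec{\tau}} := K_{\tau}$ with $\tau = \lbrace i_0,\dots,i_k\rbrace$ the underlying unordered set, and identifies the pair $(gK_{\vec{\tau}},\vec{\tau})$ with the \emph{ordered} tuple of vertices $(gK_{\lbrace i_0\rbrace},\dots,gK_{\lbrace i_k\rbrace})$. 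Since $K_{\vec{\tau}}$ depends only on $\tau$, Proposition~\ref{different descrip prop} (applied to the unordered simplex $\lbrace gK_{\lbrace i\rbrace} : i\in\tau\rbrace$) already shows that this ordered tuple is well defined and is a genuine oriented $k$-simplex of $X$, and that $g_1 K_{\vec{\tau}} = g_2 K_{\vec{\tau}}$ iff $(g_2)^{-1}g_1 \in K_{\tau}$ iff the two ordered tuples coincide.

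Next I would observe that the semidirect product action of Proposition~\ref{Aut prop} respects this refinement: for $(g,\gamma)\in G\rtimes\Gamma$ and $g'K_{\vec{\tau}}$ with $\vec{\tau}=(i_0,\dots,i_k)$, the simplicial automorphism $(g,\gamma)$ sends the ordered tuple $(g'K_{\lbrace i_0\rbrace},\dots,g'K_{\lbrace i_k\rbrace})$ to $(g\gamma(g')K_{\lbrace \psi_\gamma(i_0)\rbrace},\dots,g\gamma(g')K_{\lbrace \psi_\gamma(i_k)\rbrace})$, i.e. to $g\gamma(g')K_{\psi_\gamma(\vec{\tau})}$ where $\psi_\gamma(\vec{\tau}):=(\psi_\gamma(i_0),\dots,\psi_\gamma(i_k))$. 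This is immediate from how $(g,\gamma)$ acts on individual vertices together with part~(3) of Proposition~\ref{gamma as a permutation prop} applied entrywise, and both well-definedness and the group-action axiom are inherited verbatim from the unordered computation inside the proof of Proposition~\ref{Aut prop}.

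Finally, given $\vec{\tau},\vec{\tau}'\in\vec{\I}(k)$ and $\rho\in D_{n+1}$ with $\rho(\vec{\tau})=\vec{\tau}'$, Theorem~\ref{symmetry group onto dihedral thm} (concretely, the surjectivity $\Psi(\Gamma)=D_{n+1}$) produces $\gamma\in\Gamma<\Aut(\EL_{n+1}(\mathcal{R}))$ preserving the subgroup geometry system with $\psi_\gamma=\rho$. Viewing $\big(g'(\gamma(g))^{-1},\gamma\big)\in G\rtimes\Gamma$ as an element of $\Aut(X)$ via Proposition~\ref{Aut prop}, the previous paragraph gives
\[
\big(g'(\gamma(g))^{-1},\gamma\big).\,gK_{\vec{\tau}} \;=\; g'(\gamma(g))^{-1}\gamma(g)\,K_{\psi_\gamma(\vec{\tau})} \;=\; g'\,K_{\rho(\vec{\tau})} \;=\; g'\,K_{\vec{\tau}'},
\]
which is the desired automorphism. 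I do not anticipate a real obstacle here: the substantive content is already packaged in Theorem~\ref{symmetry group onto dihedral thm} and Proposition~\ref{Aut prop}, and the only care needed is the routine verification in the first two paragraphs that the passage to ordered types preserves both the coset description of simplices and the $G\rtimes\Gamma$-action on them.
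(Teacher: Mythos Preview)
Your proposal is correct and follows exactly the approach the paper indicates: the paper itself leaves this proof to the reader, describing it merely as ``an adaptation of Corollary~\ref{transitive action coro 1} to the oriented setting'', and your argument carries out precisely that adaptation, reusing the element $\big(g'(\gamma(g))^{-1},\gamma\big)\in G\rtimes\Gamma$ from the proof of Corollary~\ref{transitive action coro 1} after verifying that the oriented coset description and the $G\rtimes\Gamma$-action are compatible with the passage to ordered types.
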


\begin{remark}
We note that Corollary \ref{transitive action coro 1} does not imply that in general $\EL_{n+1} (\mathcal{R}) \rtimes \Gamma= \Aut (X)$, but only $\EL_{n+1} (\mathcal{R}) \rtimes \Gamma \leq \Aut (X)$. First, there might be automorphism of $X$ that do not arise from the action or $\EL_{n+1} (\mathcal{R})$ or of $\Aut (\EL_{n+1} (\mathcal{R}))$. Second, there can be non-trivial elements of $\Gamma$ that are mapped by $\Psi$ to the identity. For instance, for $\mathcal{R}= \mathbb{F}_q [t]$, where $q=p^m$ ($p$ is prime) and $m>1$, there are $m-1$ non-trivial automorphisms of $\mathbb{F}_q$. Each one of this non-trivial automorphisms, induces an automorphism of $\EL_{n+1} (\mathbb{F}_q [t])$ (by acting on the coefficients) that map each group $K_\tau$, $\tau \subseteq \I$ to itself, but acts non-trivially on $X$. %Below, we will note that working with $\mathbb{F}_{q} [t_1,...,t_m]$ instead of $\mathbb{F}_q [t]$ adds
\end{remark}

\begin{remark}
\label{symmetries comparison remark}
It is worth while comparing Corollary \ref{transitive action coro 1} (and Corollary \ref{transitive oriented action coro 1}) to the classical case of $\widetilde{A}_n$ Buildings. For a non-archimedean local field $F$, the group $PGL_{n+1} (F)$ acts on the $n$-dimensional $\widetilde{A}_n$ Building $X$. The transitivity properties of this action are similar to those discussed in Corollary \ref{transitive action coro 1}: every simplex $\tau \in X(k)$, has a type $\lbrace i_0,...,i_k \rbrace \subseteq \lbrace 0,...,n \rbrace$ and given $\tau, \tau ' \in X(k)$, with types $\lbrace i_0,...,i_k \rbrace, \lbrace i_0 ',...,i_k ' \rbrace$, there is $g \in PGL_{n+1} (F)$ such that $g. \tau = \tau '$, if and only if there is $\rho \in D_{n+1}$ such that $\lbrace \rho (i_0),..., \rho (i_k) \rbrace = \lbrace i_0 ',...,i_k ' \rbrace$. However, the case of a classical $\widetilde{A}_n$ Building, the action is not simply transitive on top dimensional simplices of the building. In other words, given an $n$-dimensional simplex, its stabilizer group is non-trivial, i.e., it contains elements other the the identity. In our case, we do not know if the automorphism group of $X(\EL_{n+1} (\mathcal{R}), ( K_{\lbrace i \rbrace} )_{i \in \I})$ always contains a non-trivial group element fixing a top dimensional simplex.
\end{remark}

\begin{comment}
In this case, there is a subgroup $K$ of $PGL_{n+1} (F)$ pointwise stabilizing an $n$-dimensional simplex $\I \in X(n)$ and $X$ is identified with $X \cong PGL_{n+1} (F) /K$ and $PGL_{n+1} (F)$ therefore acts on $X$ from the left.

However, when passing to a finite Ramanujan complex this symmetry is \underline{not} known to be preserved: passing to Ramanujan complex is done by taking a lattice $\Lambda <  PGL_{n+1} (F)$ and passing to the quotient \\
$\Lambda \backslash X = \Lambda \backslash PGL_{n+1} (F) /K$. We note that $PGL_{n+1} (F)$ can not act on \\
$\Lambda \backslash PGL_{n+1} (F) /K$ and therefore the transitivity properties of this action on $X$ are a priori lost when passing to the quotient. In \cite{LSV1}, the authors discuss choosing a lattice for which the quotient does admit some transitivity properties, but those do not have the full generality of Corollary \ref{transitive action coro 1} or Corollary \ref{transitive oriented action coro 1} in our construction.
\end{comment}

\subsection{Subgroup geometry systems for Steinberg groups}

Let $\mathcal{R}, R,T, \I$ be as above. The \textit{Steinberg group} denoted $\St_{n+1} (\mathcal{R})$, is the group generated by $x_{i,j} (m)$ where $0 \leq i, j \leq n$, and $m \in \mathcal{R}$ with the Steinberg relations:
\begin{enumerate}
\item For every $0 \leq i, j \leq n$ and every $m_1, m_2 \in \mathcal{R}$
$$x_{i,j} (m_1) x_{i,j} (m_2) = x_{i,j} (m_1+m_2).$$
\item For every $0 \leq i, j, k \leq n$ such that $i \neq k$ and every $m_1, m_2 \in \mathcal{R}$
$$[x_{i,j} (m_1), x_{j,k} (m_2)] = x_{i,k} (m_1 m_2).$$
\item For every $0 \leq i, j,k,s \leq n$ such that $i \neq k$ and $j \neq s$ and every $m_1, m_2 \in \mathcal{R}$
$$[x_{i,j} (m_1), x_{s,k} (m_2)] = 1.$$
\end{enumerate}

One can verify that $\EL_{n+1} (\mathcal{R})$ is a quotient of $\St_{n+1} (\mathcal{R})$ given by the map $x_{i,j} (m) \rightarrow e_{i,j} (m)$.  Define for $i \in \I$ the subgroups $K_{\lbrace i \rbrace} = \langle  x_{j,j+1} (m) :m \in T, j \in \I \setminus \lbrace i \rbrace \rangle$. We leave it to the reader to verify the following:
\begin{enumerate}
\item The map $x_{i,j} (m) \rightarrow e_{i,j} (m)$ restricted to $K_{\lbrace i \rbrace}$, $i \in \I$ is an isomorphism.
\item $(\St_{n+1} (\mathcal{R}), (K_{\lbrace i \rbrace})_{i \in \I})$ is a subgroup geometry system.
\end{enumerate}
Therefore by Proposition \ref{passing to quotient prop}, $X (\St_{n+1} (\mathcal{R}), (K_{\lbrace i \rbrace})_{i \in \I})$ is a covering of \\
$X(\EL_{n+1} (\mathcal{R}), (K_{\lbrace i \rbrace})_{i \in \I})$. Also, one can verify that the automorphism group of $X (\St_{n+1} (\mathcal{R}), (K_{\lbrace i \rbrace})_{i \in \I})$ contains $\St_{n+1} (\mathcal{R}) \rtimes D_{n+1}$ in the same manner this was done for $X(\EL_{n+1} (\mathcal{R}), (K_{\lbrace i \rbrace})_{i \in \I})$, i.e., by showing that $\gamma_{r} (x_{i,j} (m)) = x_{i+1,j+1} (m), \gamma_s (x_{i,j} (m)) = x_{n-j,n-i} (m)$ are automorphisms of $\St_{n+1} (\mathcal{R})$ that preserve the subgroup geometry system. In this case, we need to verify that $\gamma_{r}, \gamma_{s}$ preserve the Steinberg relations defined above and thus act as automorphisms of $\St_{n+1} (\mathcal{R})$ (this is different from the case of $\EL_{n+1} (\mathcal{R})$, where we presented $\gamma_r, \gamma_s$ as (products of) well-known automorphisms of $\EL_{n+1} (\mathcal{R})$). Such direct verification is straight-forward and thus left for the reader.

\section{Construction of local spectral high dimensional expanders}
\label{Construction of HD exp sec.}

Let $X$ a pure $n$-dimensional finite simplicial complex. For every $-1 \leq k \leq n-2$, $\tau \in X(k)$, the one skeleton of $X_{\tau}$ is a graph with a weight function on the edges corresponding to the high dimensional structure of $X$. Explicitly, given $\tau \in X(k)$ and $\lbrace u,v \rbrace \in X_\tau (1)$, we set
$$w(\lbrace u,v \rbrace) = (n-k-2)! \vert \lbrace \sigma \in X_{\tau} (n-1-k) : \lbrace u,v \rbrace \subseteq \sigma \rbrace \vert.$$
Let $\mu_{\tau}$ denote the second largest eigenvalue of the random walk on $X_\tau$ with respect to the weight on the edges.

\begin{definition}[one-sided local spectral expander]
For $0 \leq \lambda < 1$, a pure $n$-dimensional finite simplicial complex $X$ is a {\em one-sided $\lambda$-local-spectral expander} if for every $-1 \leq k \leq n-2$, and for every $\tau \in X(k)$,  $\mu_{\tau} \leq \lambda$.
\end{definition}

As in the case of expander graphs, our objective is not to construct a single one-sided $\lambda$-local-spectral expander, but given $0 < \lambda <1$ and $n>1$, to construct a family of pure $n$-dimensional finite simplicial complexes $\lbrace X^{(s)} \rbrace_{s \in \mathbb{A}}$ where $\mathbb{A} \subseteq \mathbb{N}$ is an infinite set such that the following hold:
\begin{enumerate}
\item For every $s \in \mathbb{A}$, $X^{(s)}$ is a one-sided $\lambda$-local-spectral expander.
\item The family $\lbrace X^{(s)} \rbrace_{s \in \mathbb{A}}$ has uniformly bounded degree in the following sense: 
$$\sup_{s} \max_{\lbrace v \rbrace \in X^{(s)} (0)} \vert \lbrace \sigma \in X^{(s)} (n) : v \in \sigma \rbrace \vert < \infty .$$
\item The number of vertices tends to $\infty$ with $s$: 
$$\lim_{s \in \mathbb{A}, s \rightarrow \infty} \vert X^{(s)} (0) \vert = \infty.$$
\end{enumerate}

\begin{remark}
Note that the condition of uniformly bounded degree can be rephrased as follows: there is a constant $M$ such that for every $s$, every $0 \leq k \leq n-2$ and every $\tau \in X^{(s)} (k)$, the $1$-skeleton of $X^{(s)}_\tau$ has a degree $\leq M$ (here degree is taken in the usual sense - the degree of the graph).
\end{remark}

In \cite{OppLocI}, the second named author proved that for every $\lambda>0$, and every simplicial complex $X$, if all the links of $X$ of dimension $\geq 1$ are connected and if for every $1$-dimensional link the second largest eigenvalue of the simple random walk is $\leq \frac{\lambda}{1+(n-1)\lambda}$, then $X$ is a one-sided $\lambda$-local spectral expander. In order to construct one-sided local spectral expander using subgroup geometry systems, we need only to verify the spectral condition on $1$-dimensional links (connectivity of $X$ and of every link of dimension $\geq 1$ follows from Theorem \ref{cost geom thm}). 

Recall that in our construction, the $1$-dimensional links arose from a subgroup geometry (sub)system (see Proposition \ref{link coset prop}). Therefore we are lead to the following question: given a subgroup geometry system $(G, ( K_{\lbrace 0 \rbrace}, K_{\lbrace 1 \rbrace} ))$, how can we bound the spectrum of $X (G, ( K_{\lbrace 0 \rbrace}, K_{\lbrace 1 \rbrace} ))$? (in this case, $X (G, ( K_{\lbrace 0 \rbrace}, K_{\lbrace 1 \rbrace} ))$ is a finite biregular, bipartite graph - see Proposition \ref{incidence in cosets prop} below). 

Since $X (G, ( K_{\lbrace 0 \rbrace}, K_{\lbrace 1 \rbrace} ))$ is determined by the subgroup geometry system, it is not surprising that the bound of the spectrum is achieved via a property of the subgroup geometry system. The simplest bound on the spectrum of $X (G, ( K_{\lbrace 0 \rbrace}, K_{\lbrace 1 \rbrace} ))$ is achieved when $K_{\lbrace 0 \rbrace}$ and $K_{\lbrace 1 \rbrace}$ commute:
\begin{proposition}
\label{commuting subgrps spec bound prop}
Let $G$ be a finite group with a subgroup geometry system $(G, ( K_{\lbrace 0 \rbrace}, K_{\lbrace 1 \rbrace}))$ such that $K_{\lbrace 0 \rbrace}$ and $K_{\lbrace 1 \rbrace}$ commute, i.e., for every $g_i \in K_{\lbrace i \rbrace}$, $i=0,1$, $g_0 g_1 = g_1 g_0$. Then $X (G, ( K_{\lbrace 0 \rbrace}, K_{\lbrace 1 \rbrace} ))$ is a complete bipartite graph and therefore the spectrum of the simple random walk on $X_G$ is $\lbrace -1, 0, 1\rbrace$ and in particular the second largest eigenvalue is $0$.
\end{proposition}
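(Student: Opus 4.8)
The plan is to translate the adjacency relation of $X_G := X(G,(K_{\lbrace 0\rbrace},K_{\lbrace 1 \rbrace}))$ into a condition on the product set $K_{\lbrace 0 \rbrace}K_{\lbrace 1 \rbrace}$, to use axiom $(\mathcal{A}1)$ together with the commuting hypothesis to show this product set is all of $G$, and then to diagonalize the random walk on the resulting complete bipartite graph.

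First I would recall that $X_G$ is the bipartite graph with sides $S_0 = \lbrace g K_{\lbrace 0 \rbrace} : g \in G \rbrace$ and $S_1 = \lbrace g K_{\lbrace 1 \rbrace} : g \in G \rbrace$, and that $g K_{\lbrace 0 \rbrace}$ is joined to $g' K_{\lbrace 1 \rbrace}$ iff $g K_{\lbrace 0 \rbrace} \cap g' K_{\lbrace 1 \rbrace} \neq \emptyset$, which is equivalent to $g^{-1} g' \in K_{\lbrace 0 \rbrace} K_{\lbrace 1 \rbrace} = \lbrace k_0 k_1 : k_i \in K_{\lbrace i \rbrace} \rbrace$. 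Next I would apply axiom $(\mathcal{A}1)$ to $\tau = \lbrace 0 \rbrace$ and $\tau' = \lbrace 1 \rbrace$: since $\lbrace 0 \rbrace \cap \lbrace 1 \rbrace = \emptyset$ this gives $G = K_{\emptyset} = \langle K_{\lbrace 0 \rbrace}, K_{\lbrace 1 \rbrace} \rangle$. Under the commuting hypothesis, $K_{\lbrace 0 \rbrace} K_{\lbrace 1 \rbrace}$ is closed under products and inverses, hence is a subgroup of $G$, so it coincides with $\langle K_{\lbrace 0 \rbrace}, K_{\lbrace 1 \rbrace} \rangle = G$. Consequently $g^{-1} g' \in G = K_{\lbrace 0 \rbrace} K_{\lbrace 1 \rbrace}$ for every $g, g' \in G$, so every vertex of $S_0$ is adjacent to every vertex of $S_1$; as $G$ is finite the sides $S_0, S_1$ are finite, so $X_G$ is the complete bipartite graph on $S_0$ and $S_1$. (Axiom $(\mathcal{A}3)$ also forces $\vert S_0 \vert, \vert S_1 \vert \geq 2$, since $K_{\lbrace i \rbrace} = G$ would give $K_{\I} = K_{\I \setminus \lbrace i' \rbrace}$ for the other index $i'$.)

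It remains to compute the spectrum of the simple random walk $P$ on a complete bipartite graph $K_{a,b}$ with $a = \vert S_0 \vert$, $b = \vert S_1 \vert$. Here I would pass to $P^2$: two steps starting from a vertex of $S_0$ land back in $S_0$ at a uniformly random vertex, so $P^2$ restricted to functions supported on $S_0$ is the rank-one averaging operator, with eigenvalue $1$ of multiplicity $1$ and eigenvalue $0$ of multiplicity $a-1$; likewise on $S_1$. Hence every eigenvalue of $P^2$ lies in $\lbrace 0, 1 \rbrace$, so every eigenvalue of $P$ lies in $\lbrace -1, 0, 1 \rbrace$. The constant function realizes $1$, the function equal to $1$ on $S_0$ and $-1$ on $S_1$ realizes $-1$, and every function orthogonal to these two is annihilated by $P$ (it is annihilated by $P^2$, and its image under $P$ is again annihilated by $P^2$, forcing it into the $0$-eigenspace by bipartiteness), so $0$ occurs with multiplicity $a+b-2 \geq 2$. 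Thus the spectrum of $P$ is exactly $\lbrace -1, 0, 1 \rbrace$ and the second largest eigenvalue is $0$, as claimed.

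I do not expect a genuine obstacle here: the only steps needing a line of care are the identification of adjacency with membership in $K_{\lbrace 0 \rbrace} K_{\lbrace 1 \rbrace}$ and the elementary fact that the product of two commuting subgroups is a subgroup; the spectral computation is a routine diagonalization of $P^2$, and if one prefers, the eigenvalues can instead be read off from the biregular bipartite structure via the known relation between the spectrum of $K_{a,b}$ and that of its bipartite double.
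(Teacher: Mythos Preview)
Your proof is correct and follows essentially the same route as the paper: both use axiom $(\mathcal{A}1)$ together with commutativity to obtain $G = K_{\lbrace 0 \rbrace} K_{\lbrace 1 \rbrace}$, which forces the coset graph to be complete bipartite, and then appeal to the standard spectral computation. Your presentation is if anything slightly more direct---you identify adjacency with membership in the product set and observe that this set is a subgroup, whereas the paper instead writes each vertex in a canonical form $g_1 K_{\lbrace 0 \rbrace}$, $g_0 K_{\lbrace 1 \rbrace}$ and exhibits the connecting edge $g_1 g_0 K_{\lbrace 0,1 \rbrace}$ explicitly; you also supply the spectral details that the paper leaves as ``standard'' and invoke $(\mathcal{A}3)$ to ensure both sides have at least two vertices.
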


\begin{proof}
Here we will use the description of $X (G, ( K_{\lbrace 0 \rbrace}, K_{\lbrace 1 \rbrace} ))$ given in Remark \ref{different descrip remark}. 

Recall that by the $(\mathcal{A} 1)$, $G = \langle  K_{\lbrace 0 \rbrace}, K_{\lbrace 1 \rbrace} \rangle$. Furthermore, if $K_{\lbrace 0 \rbrace}$ and $K_{\lbrace 1 \rbrace}$ commute, then every $g \in G$ can be written as $g = g_0 g_1 = g_1 g_0$, where $g_0 \in K_{\lbrace 0 \rbrace}, g_1 \in K_{\lbrace 1 \rbrace}$. 

This implies that every vertex of type $0$ is of the form $g_1 K_{\lbrace 0 \rbrace}$ where $g_1 \in K_{\lbrace 1 \rbrace}$ and every vertex of type $1$ is of the form $g_0 K_{\lbrace 1 \rbrace}$ where $g_0 \in K_{\lbrace 0 \rbrace}$. 

Given two vertices of different types $g_1 K_{\lbrace 0 \rbrace}, g_0 K_{\lbrace 1 \rbrace}$, we claim that they are connected by the edge $g_1 g_0 K_{\lbrace 0,1 \rbrace}$. Recall that by Remark \ref{different descrip remark}, to show that  $g_1 K_{\lbrace 0 \rbrace}, g_0 K_{\lbrace 1 \rbrace}$ are connected by this edge, we need to verify that $(g_1 g_0)^{-1} g_0 \in K_{\lbrace 1 \rbrace}$ and $(g_1 g_0)^{-1} g_1 \in K_{\lbrace 0 \rbrace}$, but this is obvious in the second case and follows easily from commutativity in the first case. The fact that a complete bipartite graph has the spectrum $\lbrace -1, 0, 1\rbrace$ is standard (see for instance \cite[Example 1.2]{ChungBook}).
\end{proof}

In more complicated cases, in order to bound the spectrum of \\ $X (G, ( K_{\lbrace 0 \rbrace}, K_{\lbrace 1 \rbrace} ))$, we will need the notion of $\varepsilon$-orthogonality between subgroups. 
%Below, we will just give the basic definitions and results regarding $\varepsilon$-orthogonality needed for our construction of local spectral expanders. The proofs and the more technical discussion can be found in Appendix REF.

\begin{definition}
\label{varepsilon orthogonal subspaces def}
Let $\mathcal{H}$ be a Hilbert space and let $\mathcal{H}_1, \mathcal{H}_2 \subseteq \mathcal{H}$ be closed subspaces of $\mathcal{H}$. Denote
\begin{dmath*}
\theta (\mathcal{H}_1, \mathcal{H}_2) =
{\inf \lbrace \kappa :  \vert \langle x_1, x_2 \rangle \vert \leq \kappa \Vert x_1 \Vert \Vert x_2 \Vert}, \\ {\forall x_1 \in \mathcal{H}_1 \cap (\mathcal{H}_1 \cap  \mathcal{H}_2)^\perp, \forall x_2 \in \mathcal{H}_2 \cap (\mathcal{H}_1 \cap  \mathcal{H}_2)^\perp  \rbrace}.
\end{dmath*}
For $0 \leq \varepsilon \leq 1$, $\mathcal{H}_1, \mathcal{H}_2$ will be called $\varepsilon$-orthogonal if $\theta (\mathcal{H}_1, \mathcal{H}_2) \leq \varepsilon$.
\end{definition}

\begin{definition}
Let $G$ be a group with subgroups $K_{\lbrace 0 \rbrace}, K_{\lbrace 1 \rbrace} <G$, such that $\langle K_{\lbrace 0 \rbrace}, K_{\lbrace 1 \rbrace} \rangle = G$. For a unitary representation $(\pi, \mathcal{H})$, we denote
$$\mathcal{H}^{\pi (K_{\lbrace 0 \rbrace})} = \lbrace x \in \mathcal{H} : \forall g \in K_{\lbrace 0 \rbrace}, \pi (g) x = x \rbrace,$$
$$\mathcal{H}^{\pi (K_{\lbrace 1 \rbrace})} = \lbrace x \in \mathcal{H} : \forall g \in K_{\lbrace 1 \rbrace}, \pi (g) x = x \rbrace.$$

$K_{\lbrace 0 \rbrace}, K_{\lbrace 1 \rbrace}$ are called $\varepsilon$-orthogonal if for every unitary representation $(\pi, \mathcal{H})$ of $G$, $\mathcal{H}^{\pi (K_{\lbrace 0 \rbrace})}$ and $\mathcal{H}^{\pi (K_{\lbrace 1 \rbrace})}$ are $\varepsilon$-orthogonal.
\end{definition}

Using the definition of $\varepsilon$-orthogonality, we can bound the spectrum of $X(G, ( K_{\lbrace 0 \rbrace}, K_{\lbrace 1 \rbrace} ))$:
\begin{theorem}
\label{orthogonality-spec thm}
Let $G$ be a finite group with a subgroup geometry system $(G, ( K_{\lbrace 0 \rbrace}, K_{\lbrace 1 \rbrace} ))$. Also, let $\lambda$ to be the second largest eigenvalue of the simple random walk on $X(G, ( K_{\lbrace 0 \rbrace}, K_{\lbrace 1 \rbrace} ))$. For every $0 \leq \varepsilon$, $\lambda \leq \varepsilon$ if and only if $K_{ \lbrace 0 \rbrace}$ and $K_{ \lbrace 1 \rbrace}$ are $\varepsilon$-orthogonal.
\end{theorem}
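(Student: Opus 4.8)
### Proof proposal

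The plan is to make explicit the dictionary between the bipartite graph $X(G,(K_{\lbrace 0 \rbrace},K_{\lbrace 1 \rbrace}))$ and the quasi-regular representation of $G$ on the two coset spaces, and then to recognize the random-walk operator as the product of two orthogonal projections, whose norm (away from the common fixed space) is exactly $\theta(\mathcal{H}^{\pi(K_{\lbrace 0 \rbrace})},\mathcal{H}^{\pi(K_{\lbrace 1 \rbrace})})$ maximized over unitary representations.

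First I would set up the spaces. Let $V_0 = \lbrace gK_{\lbrace 0 \rbrace} \rbrace$, $V_1 = \lbrace gK_{\lbrace 1 \rbrace}\rbrace$, and let $\ell^2(V_0)$, $\ell^2(V_1)$ carry the stationary measures of the random walk (each vertex weighted by its degree, equivalently by $|K_{\lbrace i \rbrace}|$ since the graph is biregular — this is where I would invoke that $X(G,(K_{\lbrace 0 \rbrace},K_{\lbrace 1 \rbrace}))$ is a finite biregular bipartite graph, using axiom $(\mathcal{A}1)$ which gives $G=\langle K_{\lbrace 0 \rbrace},K_{\lbrace 1 \rbrace}\rangle$ and connectivity). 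The bipartite adjacency structure is encoded by an operator $A\colon \ell^2(V_1)\to\ell^2(V_0)$, and the random walk on the bipartite graph, when restricted to the even part, has spectrum $\lbrace \mu^2 : \mu \in \mathrm{spec}(\text{normalized }A)\rbrace$; so the second largest eigenvalue $\lambda$ of the simple random walk equals the largest singular value of the normalized $A$ strictly below $1$. I would then identify $\ell^2(V_i)$ with $\mathcal{H}_i := L^2(G)^{\rho(K_{\lbrace i \rbrace})}$, the $K_{\lbrace i \rbrace}$-invariant vectors for the right regular representation $\rho$ of $G$ on $L^2(G)$ (with counting measure), via $gK_{\lbrace i \rbrace}\mapsto \mathbbm{1}_{gK_{\lbrace i \rbrace}}$; under this identification the normalized adjacency operator becomes (a scalar multiple matching the normalizations of) $P_1 P_0|_{\mathcal{H}_0}$ followed by $P_0 P_1|_{\mathcal{H}_1}$, where $P_i$ is orthogonal projection onto $\mathcal{H}_i$. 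The key classical fact, which I would state and use, is $\Vert P_0 P_1 - P_{\mathcal{H}_0\cap\mathcal{H}_1}\Vert = \theta(\mathcal{H}_0,\mathcal{H}_1)$, so $\lambda = \theta\big(L^2(G)^{\rho(K_{\lbrace 0 \rbrace})}, L^2(G)^{\rho(K_{\lbrace 1 \rbrace})}\big)$ for the specific representation $\rho = L^2(G)$.

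For the two directions: the ``$\Leftarrow$'' direction is immediate, since $\varepsilon$-orthogonality of $K_{\lbrace 0 \rbrace},K_{\lbrace 1 \rbrace}$ means $\theta(\mathcal{H}^{\pi(K_{\lbrace 0 \rbrace})},\mathcal{H}^{\pi(K_{\lbrace 1 \rbrace})})\le\varepsilon$ for \emph{every} unitary $\pi$, in particular for $\pi = \rho$, hence $\lambda\le\varepsilon$. For ``$\Rightarrow$'', I would argue that the regular representation of a finite group is faithful and, more to the point, every irreducible unitary representation of $G$ appears in $L^2(G)$; therefore it suffices to bound $\theta$ for irreducibles, and for any unitary $\pi$ one has $\theta(\mathcal{H}^{\pi(K_{\lbrace 0 \rbrace})},\mathcal{H}^{\pi(K_{\lbrace 1 \rbrace})}) \le \theta(\mathcal{H}^{\rho(K_{\lbrace 0 \rbrace})},\mathcal{H}^{\rho(K_{\lbrace 1 \rbrace})})$ by decomposing $\pi$ into irreducibles and noting that $\theta$ of a direct sum is the supremum of the $\theta$'s of the summands. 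Thus $\lambda\le\varepsilon$ forces $\theta\le\varepsilon$ on every irreducible, hence on every unitary representation, i.e.\ $\varepsilon$-orthogonality.

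The main obstacle I anticipate is the bookkeeping around \emph{normalizations}: the random walk on $X_G$ uses the stationary (degree) measure, while the cleanest operator identity $\Vert P_0P_1 - P_{\cap}\Vert = \theta$ lives in $L^2(G)$ with counting measure, and one must check the map $gK_{\lbrace i \rbrace}\mapsto \mathbbm{1}_{gK_{\lbrace i \rbrace}}$ is (up to a uniform constant depending only on $|K_{\lbrace i \rbrace}|$) an isometry intertwining the normalized adjacency operator with $P_iP_j$. A secondary technical point is justifying that $\theta$ behaves well under arbitrary (possibly infinite) direct sums of representations and that it suffices to test on $L^2(G)$ — here finiteness of $G$ is used crucially, so that $L^2(G)$ contains every irreducible. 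Once these normalization and decomposition lemmas are in place, the theorem follows by combining the operator-norm computation with the representation-theoretic reduction. (A cleaner alternative that avoids some of this is to cite the second author's prior work referenced in the excerpt, but I would prefer to give the self-contained argument sketched above.)
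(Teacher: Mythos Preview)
Your proposal is correct and follows essentially the same route as the paper's proof in Appendix~\ref{spectral gap appen}: identify $\ell^2(V_i)$ isometrically with the $K_{\{i\}}$-right-invariant functions in $\ell^2(G)$, recognize the bipartite random-walk operator as (conjugate to) the product of the averaging projections $\rho(k_{\{1\}})\rho(k_{\{0\}})$ restricted away from constants, invoke the projection-norm characterization of $\theta$, and use Peter--Weyl to reduce the universal $\varepsilon$-orthogonality statement to the regular representation. The normalization bookkeeping you flag as the main obstacle is exactly what the paper handles in its Step~1 (the explicit isometries $L_i\colon \ell_2^0(V_i)\to\ell_2^0(G)_{\{i\}}$) and Step~3, and your direct-sum argument for $\theta$ is the content of the paper's Proposition~\ref{reg. rep. condition of epsilon orthogonality prop}.
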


This Theorem was known to some experts: a partial proof of this Theorem appears in \cite{DymaraJ}[proof of Lemma 4.6] and this Theorem can also be deduced from the work of the second named author in \cite{OppAve}[Section 4.3]. However, we could not find an explicit proof of this Theorem in the literature and therefore we provided a proof in Appendix \ref{spectral gap appen}. 

\begin{remark}
Even in cases where $X$ is infinite, but locally finite, obtaining a bound on the spectrum of the links (or equivalently on the constant of orthogonality between subgroups) is desirable, because it leads to various rigidity results (see for instance: \cite{DymaraJ}, \cite{ErshovJZ}, \cite{OppWeig}, \cite{OppAve}, \cite{OppVan}, \cite{ErshovRall}).
\end{remark}

In \cite{ErshovJZ}, Ershov and Jaikin-Zapirain developed a theory of $\varepsilon$-orthogonality between subgroups in the case where $G$ is of nilpotency class two and $K_{\lbrace 0 \rbrace}, K_{\lbrace 1 \rbrace}$ are Abelian. We will not state their result in the fullest generality, but only the version we will use below:
\begin{theorem}\cite[Corollary 4.7]{ErshovJZ}
Let $R$ be a Noetherian ring, $\mathcal{R}$ be a finitely generated $R$-algebra, $\lbrace 1,t_1,...,t_l \rbrace$ be a generating set of $\mathcal{R}$ and $T = \lbrace r_0 + \sum_i r_i t_i : r \in R \rbrace$. Also, let $G$ be the upper-triangular group 
$$G = \left\lbrace \left( \begin{array}{ccc} 
1 & a_{0,1} & a_{0,2} \\
0 & 1 & a_{1,2} \\
0 & 0 & 1
\end{array} \right) : a_{0,1}, a_{1,2} \in T, a_{0,2} \in T^2 \right\rbrace,$$
and $K_{\lbrace 0 \rbrace} = \lbrace e_{0,1} (a) : a \in T \rbrace, K_{\lbrace 1 \rbrace} = \lbrace e_{1,2} (a) : a \in T \rbrace$. If the smallest proper submodule of $T$ is of index $q$, then $K_{\lbrace 0 \rbrace}$ and $K_{\lbrace 1 \rbrace}$ are $\frac{1}{\sqrt{q}}$-orthogonal.
\end{theorem}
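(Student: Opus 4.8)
The plan is to reduce the statement to a character‑averaging computation on the centre of $G$. Set $N = [K_{\lbrace 0 \rbrace}, K_{\lbrace 1 \rbrace}] = \lbrace e_{0,2}(c) : c \in T\cdot T \rbrace$, where $T\cdot T$ is the $R$-submodule of $T^2$ generated by all products; since $G$ has nilpotency class two, $N$ is central in $G$, and $G/N$ is abelian (all commutators land in $N$). By $(\mathcal A 1)$ we have $\langle K_{\lbrace 0 \rbrace}, K_{\lbrace 1 \rbrace}\rangle = G$, so $\mathcal H^{\pi(K_{\lbrace 0 \rbrace})} \cap \mathcal H^{\pi(K_{\lbrace 1 \rbrace})} = \mathcal H^{\pi(G)}$ for every unitary representation $(\pi,\mathcal H)$. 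Given such a representation I would first pass to $\mathcal H \ominus \mathcal H^{\pi(G)}$ (this discards only the common part of the two invariant subspaces), and then decompose $\pi$ as a direct integral $\mathcal H = \int_{\widehat N}\mathcal H_\chi\,d\mu(\chi)$ over the characters of $N$; each $\mathcal H_\chi$ is $G$-invariant and the invariant subspaces decompose accordingly, so it suffices to bound $\theta(\mathcal H_\chi^{\pi(K_{\lbrace 0 \rbrace})},\mathcal H_\chi^{\pi(K_{\lbrace 1 \rbrace})})$ (with $\theta$ as in Definition \ref{varepsilon orthogonal subspaces def}) for almost every $\chi$.

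On the piece where $\chi$ is trivial on $N$ the representation factors through the abelian group $G/N$; since $\mathcal H^{\pi(G)}$ has been removed and sits inside $\mathcal H^{\pi(N)}$, there are no $G$-invariant vectors here, hence no common part, and $\mathcal H_\chi^{\pi(K_{\lbrace 0 \rbrace})}$ and $\mathcal H_\chi^{\pi(K_{\lbrace 1 \rbrace})}$ are supported on disjoint sets of characters of $G/N$ and therefore orthogonal — exactly as in the proof of Proposition \ref{commuting subgrps spec bound prop} — so $\theta = 0$ there. The whole content is the case $\chi\neq 1$, where $\mathcal H_\chi$ again has no $G$-invariant vectors, so the common part is $0$ and $\theta(\mathcal H_\chi^{\pi(K_{\lbrace 0 \rbrace})},\mathcal H_\chi^{\pi(K_{\lbrace 1 \rbrace})})^2 = \|P_0 P_1 P_0|_{\mathcal H_\chi}\|$, with $P_i$ the orthogonal projection onto the $K_{\lbrace i \rbrace}$-invariants. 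Using that $K_{\lbrace 0 \rbrace}, K_{\lbrace 1 \rbrace}\cong (T,+)$ are abelian, I would realise $P_i$ as a weak operator limit of Følner averages $\frac{1}{|F_m|}\sum_{a\in F_m}\pi(e_{\cdot\,\cdot}(a))$ along a tempered Følner sequence $(F_m)$ in $T$.

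The Steinberg relation $e_{0,1}(a)e_{1,2}(b) = e_{1,2}(b)e_{0,1}(a)e_{0,2}(ab)$ (up to a harmless sign) together with the centrality of $e_{0,2}(\cdot)$ give, on $\mathcal H_\chi$,
$$\pi\big(e_{0,1}(a)e_{1,2}(b)e_{0,1}(a')\big) = \chi\big(e_{0,2}(ab)\big)\,\pi\big(e_{1,2}(b)e_{0,1}(a+a')\big),$$
so the triple Følner average defining $P_0 P_1 P_0$ collapses: averaging in $a'$ reproduces a factor $P_0$, and for each fixed $b$ the average in $a$ of the character $a\mapsto\chi(e_{0,2}(ab))$ of $(T,+)$ is $1$ if that character is trivial and $0$ otherwise. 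Hence
$$P_0 P_1 P_0|_{\mathcal H_\chi} = \Big(\lim_m \tfrac{1}{|F_m|}\sum_{b\in F_m}\mathbbm{1}_M(b)\,\pi\big(e_{1,2}(b)\big)\Big)P_0 = \tfrac{1}{[T:M]}\,Q\,P_0,$$
where $M = \lbrace b\in T : \chi(e_{0,2}(ab))=1\ \text{for all }a\in T\rbrace$ is an $R$-submodule of $T$ and $Q$ is the projection onto the $\lbrace e_{1,2}(b):b\in M\rbrace$-invariants. Since $\chi\neq 1$ there is $c=\sum_i a_i b_i\in T\cdot T$ with $\chi(e_{0,2}(c))\neq 1$, forcing some $b_i\notin M$, so $M$ is a proper submodule and $[T:M]\geq q$; therefore $\|P_0 P_1 P_0|_{\mathcal H_\chi}\| \leq \frac{1}{[T:M]} \leq \frac 1 q$, i.e. $\theta \leq \frac{1}{\sqrt q}$ on this piece. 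Combining the two cases proves the theorem.

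The main obstacle is to run all of this rigorously when $T$ (hence $K_{\lbrace 0 \rbrace}, K_{\lbrace 1 \rbrace}, N$) is infinite: one must make the direct-integral decomposition over $\widehat N$ precise (it suffices to treat separable representations), replace the honest averages above by Følner averages, and justify the interchange of the several limits together with the identification of the limiting density $\lim_m |F_m\cap M|/|F_m|$ with $[T:M]^{-1}$ (and with $0$ when $[T:M]=\infty$). This is exactly where the hypotheses enter: $R$ Noetherian and $\mathcal R$ finitely generated force $T$ to be a finitely generated $R$-module, so one may choose genuinely tempered Følner sequences — e.g. boxes adapted to a finite presentation of $T$ — for which $F_m\cap M$ is Følner in $M$ and the density computations are valid. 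When $T$ is finite everything above is immediate from orthogonality of characters of finite abelian groups, and the argument is elementary.
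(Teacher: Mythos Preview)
The paper does not give its own proof of this theorem: it is quoted verbatim as \cite[Corollary 4.7]{ErshovJZ} and used as a black box. So there is nothing in the paper to compare your argument against directly; what you have written is a sketch of the Ershov--Jaikin-Zapirain argument itself.

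On its merits your sketch is essentially correct and follows their strategy: decompose over central characters of $N=[K_{\{0\}},K_{\{1\}}]$ (which, since $1\in T$, equals the full center $\{e_{0,2}(c):c\in T^2\}$), handle the $\chi=1$ fiber by the abelian-quotient argument, and on a $\chi\neq 1$ fiber use the commutation relation to see that for $v\in\mathcal H_\chi^{K_{\{0\}}}$ the vector $\pi(e_{1,2}(b))v$ is a $K_{\{0\}}$-eigenvector with character $a\mapsto\chi(e_{0,2}(ab))$, hence $P_0\pi(e_{1,2}(b))P_0=\mathbbm 1_M(b)\,\pi(e_{1,2}(b))P_0$ with $M$ a proper $R$-submodule of $T$; this gives $\|P_0P_1P_0|_{\mathcal H_\chi}\|\le [T:M]^{-1}\le q^{-1}$. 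Two small remarks: (i) rather than juggling three F{\o}lner limits, the eigenvector observation just stated lets you compute $P_0P_1P_0$ cleanly without any limit interchange, which removes most of the ``main obstacle'' you flag; (ii) in the $\chi=1$ fiber you should say explicitly that you decompose further over $\widehat{G/N}$ (a second spectral decomposition), after which ``disjoint support'' is literally true. With those adjustments the argument is complete in the finite case and needs only the routine direct-integral bookkeeping in the infinite case.
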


In particular, this yields the following corollary that we will use in our construction below:
\begin{corollary}
\label{spec bound coro}
Let $q$ be a prime power, $G$ be the upper-triangular group 
$$G = \left\lbrace \left( \begin{array}{ccc} 
1 & a_0 + a_1 t & c_0 + c_1 t + c_2 t^2 \\
0 & 1 & b_0 + b_1 t \\
0 & 0 & 1
\end{array} \right) : a_0,...,c_2 \in \mathbb{F}_q \right\rbrace,$$
and $K_{\lbrace 0 \rbrace} = \lbrace e_{0,1} (a_0 + a_1 t) :a_0, a_1 \in \mathbb{F}_q \rbrace, K_{\lbrace 1 \rbrace} = \lbrace e_{1,2} (b_0 + b_1 t) :b_0, b_1 \in \mathbb{F}_q \rbrace$. Then $K_{\lbrace 0 \rbrace}$ and $K_{\lbrace 1 \rbrace}$ are $\frac{1}{\sqrt{q}}$-orthogonal.
\end{corollary}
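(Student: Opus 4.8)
This corollary is an instance of the quoted theorem \cite[Corollary 4.7]{ErshovJZ}, so the plan is simply to match parameters and verify the one numerical invariant it requires. I take $R = \mathbb{F}_q$, $\mathcal{R} = \mathbb{F}_q[t]$, and the generating set $\lbrace 1, t \rbrace$, so that $T = \lbrace a_0 + a_1 t : a_0, a_1 \in \mathbb{F}_q \rbrace$ and $T^2 = \lbrace c_0 + c_1 t + c_2 t^2 : c_0, c_1, c_2 \in \mathbb{F}_q \rbrace$. First I would check that the hypotheses of the theorem hold: $\mathbb{F}_q$ is a field, hence Noetherian, and $\mathbb{F}_q[t]$ is a finitely generated $\mathbb{F}_q$-algebra; moreover the group $G$ displayed in the corollary, together with $K_{\lbrace 0 \rbrace} = \lbrace e_{0,1}(a) : a \in T \rbrace$ and $K_{\lbrace 1 \rbrace} = \lbrace e_{1,2}(b) : b \in T \rbrace$, is literally the upper-triangular group and pair of subgroups appearing in the theorem for this choice of $T$ (note $a_{0,1}, a_{1,2}$ range over $T$ and $a_{0,2}$ over $T^2$ in both).

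\textbf{The index computation.} It then remains only to identify the prime power $q$ in the statement with the invariant "$q$" of the theorem, i.e.\ to show that the smallest proper submodule of $T$ — meaning the proper $R$-submodule of \emph{smallest index} in $T$ — has index $q$. Since $R = \mathbb{F}_q$ is a field, $T$ is the free $\mathbb{F}_q$-module on $\lbrace 1, t \rbrace$, hence a $2$-dimensional $\mathbb{F}_q$-vector space with $|T| = q^2$; its $R$-submodules are exactly its linear subspaces, and among the proper ones those of smallest index are the $1$-dimensional subspaces, each of cardinality $q$ and thus of index $q^2/q = q$ in $T$. Feeding this value into \cite[Corollary 4.7]{ErshovJZ} yields that $K_{\lbrace 0 \rbrace}$ and $K_{\lbrace 1 \rbrace}$ are $\frac{1}{\sqrt{q}}$-orthogonal, which is the claim.

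\textbf{Main obstacle.} There is essentially no analytic or representation-theoretic content to produce here: all of it is packaged in the cited theorem. The only point that needs a moment's care is the bookkeeping around the phrase "smallest proper submodule of $T$": one must read it as the proper submodule of \emph{smallest index} (equivalently, a maximal proper submodule), not as the set-theoretically smallest submodule $\lbrace 0 \rbrace$, which has index $q^2$ and would give the wrong constant. With the correct reading the verification is immediate because $T \cong \mathbb{F}_q^2$.
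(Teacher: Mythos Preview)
Your proposal is correct and matches the paper's approach: the paper states this result as an immediate corollary of \cite[Corollary 4.7]{ErshovJZ} with no separate proof, and your parameter-matching (taking $R=\mathbb{F}_q$, $\mathcal{R}=\mathbb{F}_q[t]$, $T=\mathbb{F}_q\oplus\mathbb{F}_q t$) together with the observation that a maximal proper $\mathbb{F}_q$-subspace of $T\cong\mathbb{F}_q^2$ has index $q$ is exactly the verification that is implicitly required. Your remark about reading ``smallest proper submodule'' as ``proper submodule of smallest index'' is the correct reading of the cited theorem.
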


After this, we can use the construction of \cref{Elementary matrices construction sect} to produce a construction of a family of local spectral expanders. 
\begin{theorem}
Let $n \geq 2$ and let $q$ be a prime power such that $q > (n-1)^2$. For $s \in \mathbb{N}$, let $\mathbb{F}_q [t] / \langle t^s \rangle$ be the $\mathbb{F}_q$ algebra with the generating set $\lbrace 1,t \rbrace$. Let $X^{(s)}$ be the simplicial complex of the subgroup geometry system of $\EL_{n+1} ( \mathbb{F}_q [t] / \langle t^s \rangle)$ described is \cref{Elementary matrices construction sect} above. Then for every $s > n$, the following hold for $X^{(s)}$:
\begin{enumerate}
\item $X^{(s)}$ is a pure $n$-dimensional, $(n+1)$-partite, strongly gallery connected clique complex with no free faces.
\item $X^{(s)}$ is finite and the number of vertices of $X^{(s)}$ tends to infinity as $s$ tends to infinity.
\item There is a constant $Q=Q(q)$ such that for every $s$, each vertex of $X^{(s)}$ is contained in exactly $Q$ $n$-dimensional simplices.
\item Each $1$-dimensional link of $X^{(s)}$ has a spectral gap greater or equal  to $1-\frac{1}{\sqrt{q}}$.
\item $X^{(s)}$ is a $\frac{1}{\sqrt{q}-(n-1)}$-local spectral expander. In particular, the spectral gap of the one-skeleton of $X^{(s)}$ is greater or equal  to $1-\frac{1}{\sqrt{q}-(n-1)}$.
\item The automorphisms group of $X^{(s)}$ contains $\EL_{n+1} (\mathbb{F}_q [t] / \langle t^s \rangle) \rtimes D_{n+1}$, where $\EL_{n+1} (\mathbb{F}_q [t] / \langle t^s \rangle)$ acts transitively on $n$-dimensional simplices and $D_{n+1}$ rotates / inverts the types. 
\end{enumerate}
Thus, given any $\lambda >0$, we can take $q$ large enough such that $\lbrace X^{(s)} : s > 2^{n-1} \rbrace $ will be an infinite family of $n$-dimensional $\lambda$-local spectral expanders which are uniformly locally finite.
\end{theorem}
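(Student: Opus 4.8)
The plan is to instantiate the constructions of \S\ref{Elementary matrices construction sect} and \S\ref{Construction of HD exp sec.} with $\mathcal{R} = \mathbb{F}_q[t]/\langle t^s\rangle$, regarded as an $\mathbb{F}_q$-algebra with generating set $\{1,t\}$, so that $T = \{a_0 + a_1 t : a_0,a_1 \in \mathbb{F}_q\}$ and $X^{(s)} = X(\EL_{n+1}(\mathcal{R}),(K_{\lbrace i\rbrace})_{i\in\I})$. By Theorem \ref{geometry system for elementary mat groups thm} this is the coset complex of a subgroup geometry system, so item (1) is exactly Theorem \ref{cost geom thm}. For item (2): $\EL_{n+1}(\mathcal{R})$ acts transitively on $X^{(s)}(n)$ with trivial stabilizer $K_\I = \{e\}$ (Corollary \ref{subgroups of EL coro}), hence $X^{(s)}$ is finite with $\vert X^{(s)}(n)\vert = \vert\EL_{n+1}(\mathcal{R})\vert \ge q^s$; since (see below) each $\vert K_{\lbrace i\rbrace}\vert$ is a fixed power of $q$, the vertex count $\sum_i\vert\EL_{n+1}(\mathcal{R})/K_{\lbrace i\rbrace}\vert$ tends to infinity with $s$.

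The hypothesis $s>n$ is used as follows: by Corollary \ref{subgroups of EL coro}, every off-diagonal entry of any matrix in any $K_\tau$ lies in some $T^m$ with $m\le n<s$, so reduction mod $t^s$ is injective on that $T^m$; consequently each $K_\tau$ is isomorphic to the corresponding subgroup over $\mathbb{F}_q[t]$ and has order a power of $q$ independent of $s$. (Equivalently, $K_{\lbrace i\rbrace}\cap N = \{e\}$ for $N = \ker(\EL_{n+1}(\mathbb{F}_q[t])\to\EL_{n+1}(\mathcal{R}))$, so by Proposition \ref{passing to quotient prop} $X^{(s)}$ is a quotient of the infinite complex $X(\EL_{n+1}(\mathbb{F}_q[t]),(K_{\lbrace i\rbrace}))$ with the same links.) Item (3) follows: the number of $n$-simplices through $gK_{\lbrace i\rbrace}$ is the number of top simplices of the link $X^{(s)}_{gK_{\lbrace i\rbrace}}\cong X(K_{\lbrace i\rbrace},(K_{\{i,j\}})_{j\ne i})$ (Proposition \ref{link coset prop}), which by Theorem \ref{cost geom thm} applied to this subsystem equals $\vert K_{\lbrace i\rbrace}\vert =: Q(q)$ --- independent of $s$, and independent of $i$ by the dihedral symmetry. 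Item (6) is Theorem \ref{symmetry group onto dihedral thm} together with Proposition \ref{Aut prop}: $\Gamma = \langle\gamma_r,\gamma_s\rangle$ preserves the subgroup geometry system with $\Psi(\Gamma)=D_{n+1}$; as $\gamma_r,\gamma_s$ satisfy the dihedral relations $\gamma_r^{n+1}=\gamma_s^2=\mathrm{id}$ and $\gamma_s\gamma_r\gamma_s=\gamma_r^{-1}$ (a direct check, using that $\iota$ commutes with each $\gamma_\rho$), $\Gamma\cong D_{n+1}$, and the resulting action of $\EL_{n+1}(\mathcal{R})\rtimes D_{n+1}$ on $X^{(s)}$ is faithful, with $\EL_{n+1}(\mathcal{R})$ transitive on $X^{(s)}(n)$ and $D_{n+1}$ acting on types through $\Psi$.

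The substance is in items (4) and (5). By Proposition \ref{link coset prop} every $1$-dimensional link of $X^{(s)}$ is isomorphic to $X(K_{\I\setminus\{i,j\}},(K_{\I\setminus\{j\}},K_{\I\setminus\{i\}}))$ for some pair $i\ne j$ in $\I$, and $(K_{\I\setminus\{i,j\}},(K_{\I\setminus\{j\}},K_{\I\setminus\{i\}}))$ is itself a subgroup geometry system. By Corollary \ref{subgroups of EL coro}: if $(i-j)\not\equiv\pm1\pmod{n+1}$ then $K_{\I\setminus\{i\}}=\{e_{i,i+1}(m):m\in T\}$ and $K_{\I\setminus\{j\}}=\{e_{j,j+1}(m):m\in T\}$ are supported on disjoint index pairs, hence commute, so by Proposition \ref{commuting subgrps spec bound prop} this link is complete bipartite with second eigenvalue $0\le\tfrac{1}{\sqrt q}$; if $(i-j)\equiv\pm1\pmod{n+1}$ then --- since $s>n\ge2$ makes the degree-$\le2$ entries faithful --- $K_{\I\setminus\{i,j\}}$ is (isomorphic to) the $3\times3$ upper-triangular group of Corollary \ref{spec bound coro}, with $K_{\I\setminus\{i\}},K_{\I\setminus\{j\}}$ corresponding to its two root subgroups, so those are $\tfrac{1}{\sqrt q}$-orthogonal (Corollary \ref{spec bound coro}) and Theorem \ref{orthogonality-spec thm} bounds the second eigenvalue of the link by $\tfrac{1}{\sqrt q}$. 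This proves (4). For (5), put $\lambda=\tfrac{1}{\sqrt q-(n-1)}$, which is positive since $q>(n-1)^2$; a direct computation gives $\tfrac{\lambda}{1+(n-1)\lambda}=\tfrac{1}{\sqrt q}$. As $X^{(s)}$ is strongly gallery connected (item (1)), all its links of dimension $\ge1$ are connected (Remark \ref{strongly gallery connected to locally connected rmk}) and, by (4), each $1$-dimensional link has second eigenvalue $\le\tfrac{\lambda}{1+(n-1)\lambda}$; hence $X^{(s)}$ is a one-sided $\lambda$-local spectral expander by the result of \cite{OppLocI} recalled above. Finally, for any prescribed $\lambda>0$, choosing a prime power $q>\max\{(n-1)^2,(n-1+\lambda^{-1})^2\}$ gives $\tfrac{1}{\sqrt q-(n-1)}\le\lambda$, so $\{X^{(s)}:s>n\}$ is an infinite family of $n$-dimensional $\lambda$-local spectral expanders, uniformly locally finite by item (3) (for $s>n$ all links of all $X^{(s)}$ are links of the single fixed infinite complex) and with $\vert X^{(s)}(0)\vert\to\infty$ by item (2).

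I expect the main obstacle to be item (4): reading off from Corollary \ref{subgroups of EL coro} and the cyclic $\pmod{n+1}$ indexing exactly which pairs $\{i,j\}$ give commuting (hence complete-bipartite) links and which give the Heisenberg-type group, and then verifying that over the truncated ring $\mathbb{F}_q[t]/\langle t^s\rangle$ with $s>n$ the latter are literally the groups to which Corollary \ref{spec bound coro} applies, so that the orthogonality constant $\tfrac{1}{\sqrt q}$ --- and with it the bound $\tfrac{\lambda}{1+(n-1)\lambda}$ demanded by \cite{OppLocI} --- genuinely transfers. Everything else reduces to a direct invocation of the already-proved general statements, modulo the bookkeeping that $s>n$ makes all subgroups $K_\tau$, and therefore all links, independent of $s$.
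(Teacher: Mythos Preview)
Your proposal is correct and follows essentially the same route as the paper's proof: item (1) via Theorem \ref{cost geom thm}, items (2)--(3) from Corollary \ref{subgroups of EL coro} and the observation that $s>n$ forces all $K_\tau$ to be independent of $s$, the crucial item (4) via the same $\pm 1 \pmod{n+1}$ case split (commuting subgroups / Heisenberg group) invoking Proposition \ref{commuting subgrps spec bound prop} and Corollary \ref{spec bound coro} with Theorem \ref{orthogonality-spec thm}, item (5) from \cite{OppLocI}, and item (6) from Theorem \ref{symmetry group onto dihedral thm}. In fact you supply more detail than the paper in several places --- the explicit check $\tfrac{\lambda}{1+(n-1)\lambda}=\tfrac{1}{\sqrt q}$, the alternative viewpoint through Proposition \ref{passing to quotient prop}, and the verification of the dihedral relations for $\Gamma=\langle\gamma_r,\gamma_s\rangle$ --- but the architecture is the same.
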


\begin{proof}
The first assertion is due to Theorem \ref{cost geom thm}. 

The second assertion follows for the fact that $ \mathbb{F}_q [t] / \langle t^s \rangle$ is finite and $\vert  \mathbb{F}_q [t] / \langle t^s \rangle \vert$ tends to infinity with $s$. 

For the third assertion, note that for every $s > n$, $T^{n} / \langle t^s \rangle = T^{n}$ where $T = \lbrace a_0 + a_1 t :a_0, a_1 \in \mathbb{F}_q \rbrace$. Thus by Corollary \ref{subgroups of EL coro}, the subgroups $K_{\lbrace i \rbrace}$, $i \in \I$ are the same for every $s > n$ and moreover, for every $i, i'$, $K_{\lbrace i \rbrace}, K_{\lbrace i' \rbrace}$ are isomorphic and therefore the links of any two vertices are isomorphic and does not change with $s$. In particular, for $Q= \vert K_{\lbrace i \rbrace} \vert$ each vertex of $X^{(s)}$ is contained in exactly $Q$ $n$-dimensional simplices.

For the forth assertion we note that every $1$-dimensional link is of the form $X (K_{\I \setminus \lbrace i,j \rbrace}, (K_{\I \setminus \lbrace i \rbrace}, \I \setminus \lbrace j \rbrace))$ where $0 < i < j \leq n$. As in the proof of Proposition \ref{limitations of sym prop}, there are two possibilities: either $\vert i-j \mod (n+1) \vert >1$ and then $K_{\I \setminus \lbrace i,j \rbrace}$ is Abelian and in that case by Proposition \ref{commuting subgrps spec bound prop}, the link is a full bipartite graph and the second largest eigenvalue is $0$. Or $\vert i-j \mod (n+1) \vert =1$ and in that case $K_{\I \setminus \lbrace i,j \rbrace}$ is the group of Corollary \ref{spec bound coro} and therefore the second eigenvalue is $\frac{1}{\sqrt{q}}$.

The fifth assertion follows directly from the main result of \cite{OppLocI} mentioned above.

The last assertion is due to Theorem \ref{symmetry group onto dihedral thm}.
\end{proof}

\bibliographystyle{alpha}
\bibliography{bibl}

\appendix

\section{Incidence systems}
\label{Incidence systems appen}
A parallel terminology to the terminology of simplicial complexes is the terminology of incidence systems. Since virtually all the literature regarding coset geometry is written in the language of incidence systems, we use this appendix to provide a dictionary between the two terminologies.

A triplet $(V,*,\type)$ is called an \textit{incidence system} over a finite set $\I$ if:
\begin{enumerate}
\item $V$ is a set.
\item $*$ is a reflexive and symmetric relation called the \textit{incidence relation}. If the pair $(u,v)$ is in the incidence relation of the system, we write $u*v$ and say that $u,v$ are incident.
\item $\type : V \rightarrow \I$ is a function such that for $v,u \in V$, $u*v$ implies $\type (u) \neq \type (v)$.
\end{enumerate}

Let $(V,*,\type)$ be an incidence system over $\I$. 

Given a set $A \subseteq V$, define the type of $A$ to be $\type (A) = \lbrace \type (v) : v \in A \rbrace$ and called $\vert \type (A) \vert$ the \textit{rank} of $A$. Also for $A \subseteq V$, define $\vert \I \vert - \vert \type (A) \vert$ to be the \textit{corank} of $A$.

A \textit{flag} is a set of elements $\lbrace v_0,...,v_k \rbrace \subseteq V$ such that for every $0 \leq i,j \leq k$, $v_i * v_j$. A \textit{chamber} is a flag of type $\I$ or equivalently of rank $\vert \I \vert$.

Given a flag $F$, the \textit{residue} of $F$ is the incidence geometry $(F^* \setminus F, *, \type)$ over $\I \setminus \type (F)$ where $F^* = \lbrace v \in V : \forall u \in F, v*u \rbrace$ ($*$, $\type$ are restricted here to $F^* \setminus F$). 

An incidence system $(V,*,\type)$ over $\I$ is called \textit{residually connected} if for every flag $F$ of corank $\geq 2$, the residue of $F$ is connected. 

An incidence system $(V,*,\type)$ over $\I$ is called a \textit{geometry} over $\I$ if every flag is contained in at least one chamber. A \textit{geometry} over $\I$ is called \textit{firm} if every flag of rank $< \vert \I \vert$ is contained in at least two chambers. A geometry over $\I$ is called an \textit{$\I$-geometry} if it is firm and residually connected.

Given an incidence system $(V,*,\type)$ over a finite set $\I$ with $\vert \I \vert = n+1$, we define a simplicial complex $X(V,*,\type)$ as follows: for every $0 \leq k \leq n$, the $k$-simplices of $X (V,*,\type)$ are the flags of $(V,*,\type)$ of rank $k-1$. Note that this is indeed a simplicial complex and moreover this simplicial complex is $(n+1)$-partite. We also note that for a flag $F = \lbrace v_0,...,v_k \rbrace$ in $(V,*,\type)$, $X(F^* \setminus F, *, \type)$ is exactly the link of $\lbrace v_0,...,v_k \rbrace$ in $X (V,*,\type)$. The following table provides a dictionary between the properties of $(V,*,\type)$ and the properties of $X (V,*,\type)$:
\begin{center}
\begin{tabular}{ |m{5cm}|m{5cm}| } 
 \hline
\boldmath$(V,*,\type)$ & \boldmath$X (V,*,\type)$ \\
\hline
\hline
Residually connected & Strongly gallery connected (by Remark \ref{strongly gallery connected to locally connected rmk}) \\
\hline
Geometry over $\I$ & Pure $n$-dimensional \\
\hline
Firm geometry over $\I$& Pure $n$-dimensional and has no free faces \\
\hline
$\I$-geometry & Pure $n$-dimensional, strongly gallery connected and has no free faces \\
 \hline
\end{tabular}
\end{center}

\section{Spectral gap of bipartite graphs and $\varepsilon$-orthogonality}
\label{spectral gap appen}
The aim of this appendix is to prove Theorem \ref{orthogonality-spec thm}. In order to prove this Theorem, we will first gather some facts regarding bipartite graphs and $\varepsilon$-orthogonality.

\subsection{Spectra of random walks on bipartite graphs}
\label{Spectra of random walks on bipartite graphs}

Given a finite graph $(V,E)$, we denote the valency of $v \in V$ as $w(v) = \vert \lbrace u : \lbrace v,u \rbrace \in E \rbrace \vert$. We will always assume that $(V,E)$ has no isolated vertices, i.e., that $w(v) \geq 1$ for every $v \in V$. The Markov matrix (or Markov operator) of the graph is the $\vert V \vert \times \vert V \vert$ matrix with the entries
$$M(v,u) = \begin{cases}
\frac{1}{w(v)} & \lbrace v,u \rbrace \in E \\
0 & \lbrace v,u \rbrace \notin E
\end{cases}.$$
This is a stochastic matrix and therefore its largest eigenvalue is $1$ and for every eigenvalue $\lambda$ of $M$, $\vert \lambda \vert \leq 1$. If $(V,E)$ is connected, then the eigenvalue $1$ has multiplicity $1$ and all the other eigenvalues are strictly less than $1$. The spectral gap of a connected graph $(V,E)$ is $1 -( \text{the second largest eigenvalue of } M)$.

In the case of a biregular, bipartite graph, the spectral gap has an alternative description which we will now explain. Recall that a \textit{bipartite graph} $(V,E)$ is a graph with two disjoint sets of vertices $V_0, V_1 \subset V$, that are called the sides of the graph, such that $V = V_0 \cup V_1$ and every edge in the graph has one vertex in $V_0$ and one vertex is $V_1$, i.e., $\lbrace u, v \rbrace \in E$ implies that $\vert \lbrace u, v \rbrace \cap V_0 \vert = \vert \lbrace u, v \rbrace \cap V_1 \vert =1$. A bipartite graph is called \textit{biregular} or semiregular if there are constants $d_0, d_1$ such that for every $i =0,1$ and every $v \in V_i$, $w(v) = d_i$.

Given a biregular bipartite graph $(V,E)$, the Markov matrix in this case is the matrix
$$M (u,v) = \begin{cases}
\frac{1}{d_0} & \lbrace u , v \rbrace \in E, u \in V_0 \\
\frac{1}{d_1} & \lbrace u , v \rbrace \in E, u \in V_1 \\
0 & \lbrace u , v \rbrace \notin E
\end{cases}.$$
Define the space $\ell_2 (V)$ to be the space of functions $\phi: V \rightarrow \mathbb{C}$ with the inner product
$$\langle \phi , \psi \rangle = d_0 \sum_{v \in V_0} \phi (v) \overline{\psi (v)} + d_1  \sum_{v \in V_1} \phi (v) \overline{\psi (v)}, \forall \phi, \psi \in \ell_2 (V).$$
A direct computation shows that with respect to this norm $M: \ell_2 (V) \rightarrow \ell_2 (V)$ is a self-adjoint operator and therefore has an orthogonal basis of eigenfunctions with real eigenvectors. We note that if $\phi$ is an eigenfunction of $M$ with eigenvalue $\mu$, then
$$\phi' (v) = \begin{cases}
\phi (v) & v \in V_0 \\
- \phi (v) & v \in V_1
\end{cases}$$
is an eigenfunction of $M$ with eigenvalue $-\mu$ (this is shown by direct computation - the details are left for the reader). This implies that the eigenvalues of $M$ are symmetric with respect to $0$ (including the multiplicity), i.e., if $\mu$ is an eigenvalue of $M$ with multiplicity $l$, then $-\mu$ is also an eigenvalue of $M$ with multiplicity $l$. For $U \subseteq V$, denote $\chi_U \in \ell_2 (V)$ to be the indicator function on $U$. We note that $\chi_V$ is an eigenfunction with an eigenvalue $1$ and $\chi_{V_0} - \chi_{V_1}$ is an eigenfunction with an eigenvalue $-1$.

We denote
$$\ell_2^0 (V) = \lbrace \phi \in \ell_2 (V) : \langle \phi, \chi_{V_0} \rangle = \langle \phi, \chi_{V_1} \rangle =0 \rbrace,$$
i.e., $\ell_2^0 (V)$ is the subspace of functions $\phi \in \ell_2 (V)$ such that
$$\sum_{v \in V_0} \phi (v) = \sum_{v \in V_1} \phi (v) =0.$$
We note that $\ell_2^0 (V)$ is exactly all the functions in $\ell_2 (V)$ that are orthogonal to $\chi_V$ and $\chi_{V_0} - \chi_{V_1}$, because $\Span \lbrace \chi_V, \chi_{V_0} - \chi_{V_1} \rbrace = \Span \lbrace \chi_{V_0}, \chi_{V_1} \rbrace$. Recall that eigenvectors of $M$ are orthogonal and therefore
for every $\phi \in \ell_2^0 (V)$, we have that $M \phi \in \ell_2^0 (V)$. This means that restricting $M$ to $\ell_2^0 (V)$ yields $M \vert_{\ell_2^0 (V)} :  \ell_2^0 (V) \rightarrow \ell_2^0 (V)$.
\begin{proposition}
\label{connection - spec gap and M norm prop}
Let $(V,E)$ be a finite, connected, biregular, bipartite graph with more than $2$ vertices and let $\lambda$ be the second largest eigenvalue of $M$. Then $\lambda = \Vert M \vert_{\ell_2^0 (V)} \Vert$, where $\Vert . \Vert$ denotes the operator norm.
\end{proposition}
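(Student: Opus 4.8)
The plan is to read everything off the spectral decomposition of the self-adjoint operator $M$. Write $N=|V|$ and list the eigenvalues of $M$ with multiplicity in decreasing order $1=\mu_1\ge\mu_2\ge\cdots\ge\mu_N=-1$; they are real and $\ell_2(V)$ has an orthogonal basis of eigenvectors, since $M$ is self-adjoint (as recorded just before the proposition). First I would pin down the two extreme eigenspaces: connectedness forces the eigenvalue $1$ to be simple with eigenvector $\chi_V$, and the sign-flip symmetry $\phi\mapsto\phi'$ recorded above (which carries the $\mu$-eigenspace isomorphically onto the $(-\mu)$-eigenspace) then makes $-1$ simple with eigenvector $\chi_{V_0}-\chi_{V_1}$, and more generally gives $\mu_i=-\mu_{N+1-i}$. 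Since $\Span\{\chi_V,\chi_{V_0}-\chi_{V_1}\}=\Span\{\chi_{V_0},\chi_{V_1}\}$ is precisely the sum of the $(+1)$- and $(-1)$-eigenspaces, its orthogonal complement $\ell_2^0(V)$ is the orthogonal sum of the remaining eigenspaces; hence $M\vert_{\ell_2^0(V)}$ is self-adjoint with eigenvalues exactly $\mu_2,\dots,\mu_{N-1}$, all of them strictly between $-1$ and $1$ because $\pm1$ are simple, and therefore $\Vert M\vert_{\ell_2^0(V)}\Vert=\max_{2\le i\le N-1}\vert\mu_i\vert$.

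Next I would show this maximum equals $\lambda=\mu_2$. From $\mu_{N-1}=-\mu_2$ we get $-\mu_2=\mu_{N-1}\le\mu_i\le\mu_2$ for every $2\le i\le N-1$, so $\vert\mu_i\vert\le\mu_2$ as soon as $\mu_2\ge0$, which would give $\Vert M\vert_{\ell_2^0(V)}\Vert=\mu_2$. To get $\mu_2\ge0$ I would use that $M$ has zero diagonal (the graph has no loops), so $\operatorname{tr}M=\sum_{i=1}^N\mu_i=0$; cancelling $\mu_1=1$ against $\mu_N=-1$ leaves $\sum_{i=2}^{N-1}\mu_i=0$, and since $|V|>2$ this is a vanishing sum of at least one term drawn from the non-increasing list $\mu_2\ge\cdots\ge\mu_{N-1}$, so its largest term $\mu_2$ cannot be negative. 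Combining, $\Vert M\vert_{\ell_2^0(V)}\Vert=\mu_2=\lambda$.

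The only step that is not pure bookkeeping is the last inequality $\mu_2\ge0$: it is what makes the identity between a genuine (nonnegative) operator norm and ``the second largest eigenvalue'' possible, and it is exactly here that the hypothesis $|V|>2$ is used -- for a single edge $\ell_2^0(V)=\{0\}$ while the second largest eigenvalue is $-1$, so the statement genuinely fails. All the other ingredients -- self-adjointness of $M$, the sign-flip symmetry, simplicity of the eigenvalue $1$ -- are already available in the excerpt.
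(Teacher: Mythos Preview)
Your proof is correct and follows essentially the same line as the paper: both arguments use the spectral decomposition of the self-adjoint operator $M$, identify $\ell_2^0(V)$ with the orthogonal complement of the $\pm 1$-eigenspaces, and then invoke the bipartite eigenvalue symmetry $\mu\leftrightarrow -\mu$ to conclude that the largest modulus among the remaining eigenvalues is exactly $\mu_2=\lambda$.

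One small remark: the trace argument you single out as ``the only step that is not pure bookkeeping'' is in fact already contained in the line just before it. You wrote $-\mu_2=\mu_{N-1}\le\mu_i\le\mu_2$; taking $i=2$ (or $i=N-1$) gives $-\mu_2\le\mu_2$, i.e.\ $\mu_2\ge 0$, with no appeal to the trace needed. This is precisely how the paper gets $\lambda\ge 0$: once $|V|>2$ there is an eigenvalue $\mu\neq\pm 1$, and by the symmetry $-\mu$ is one too, so $\lambda\ge\max(\mu,-\mu)\ge 0$. Your trace computation is a pleasant alternative, but the symmetry you already established does all the work.
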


\begin{proof}
The graph $(V,E)$ has more than two vertices and it is connected and therefore the matrix $M$ has an eigenvalue different that $1$ and $-1$, indeed, recall that if $(V,E)$ is connected that $1$ is an eigenvalue with multiplicity $1$ and since the eigenvalues are symmetric with respect to $0$, $-1$ is an eigenvalue with multiplicity $1$ as well. Therefore $\lambda \geq 0$ and (again using the symmetry of the eigenvalues), every eigenvalue $\mu$ of $M \vert_{\ell_2^0 (V)}$ has $\vert \mu \vert \leq \lambda$. $M$ is self-adjoint and therefore every $\phi \in \ell_2^0 (V)$ can be written as $\phi = \psi_1 +...+\psi_k$ where $\psi_1,...,\psi_k \in \ell_2^0 (V)$ are orthogonal eigenfunctions of $M$ with eigenvalues $\mu_1,...,\mu_k$. Therefore
\begin{dmath*}
\Vert M \phi \Vert^2 = \langle M \phi, M \phi \rangle =
\langle M(\sum_{i=1}^k \psi_i) , M(\sum_{i=1}^k \psi_i) \rangle
=\langle \sum_{i=1}^k \mu_i \psi_i , \sum_{i=1}^k \mu_i \psi_i \rangle  \\
=
\sum_{i=1}^k \mu_i^2 \Vert \psi_i \Vert^2 \leq  \sum_{i=1}^k \lambda^2 \Vert \psi_i \Vert^2 = \lambda \Vert \phi \Vert^2.
\end{dmath*}
The above inequality yields that for every $\phi \in \ell_2^0 (V)$, $\Vert M \phi \Vert \leq \lambda \Vert \phi \Vert$, i.e., $\Vert M \vert_{\ell_2^0 (V)} \Vert \leq \lambda$. To finish the proof we note that the eigenfunction of the eigenvalue $\lambda$ is in $\ell_2^0 (V)$ and therefore $\Vert M \vert_{\ell_2^0 (V)} \Vert =  \lambda$ as needed.
\end{proof}

In continuation to the above proposition, we will show that the second largest eigenvalue of the random walk can be further described as the operator norms of two different operators $M_0$ and $M_1$ defined below. For $i=0,1$, define $\ell_2 (V_i)$ to be the space of functions $\phi: V_i \rightarrow \mathbb{C}$ with the inner product
$$\langle \phi , \psi \rangle = d_i \sum_{v \in V_i} \phi (v) \overline{\psi (v)}, \forall \phi, \psi \in \ell_2 (V_i).$$
Further define the operator $M_i : \ell_2 (V_i) \rightarrow \ell_2 (V_{i + 1})$ (where $i+1$ is taken $\mod 2$) as $M_i = M \vert_{\ell_2 (V_i)}$, i.e., for every $\phi \in \ell_2 (V_i)$ and every $v \in V_{i + 1}$,
$$M_i \phi (v) = \dfrac{1}{d_{i+1}} \sum_{u \in V_i, \lbrace u, v \rbrace \in E} \phi (u).$$
Denote further
$$\ell_2^0 (V_i) = \lbrace \phi \in \ell_2 (V_i) : \sum_{v \in V_i} \phi (v) = 0 \rbrace,$$
and note that restricting $M_i$ to $\ell_2^0 (V_i)$ yields a map $M \vert_{\ell_2^0 (V_i)} : \ell_2^0 (V_i) \rightarrow \ell_2^0 (V_{i+1})$.

\begin{proposition}
\label{connection - spec gap and M_0, M_1 norm prop}
Let $(V,E)$ be a finite, connected, biregular, bipartite graph with more than $2$ vertices and let $\lambda$ be the second largest eigenvalue of $M$. Then $\lambda = \Vert M_0 \vert_{\ell_2^0 (V_0)} \Vert = \Vert M_1 \vert_{\ell_2^0 (V_1)} \Vert$, where $\Vert . \Vert$ denotes the operator norm.
\end{proposition}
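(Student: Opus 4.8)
The plan is to build on Proposition \ref{connection - spec gap and M norm prop}, which already identifies $\lambda$ with $\Vert M\vert_{\ell_2^0 (V)} \Vert$, by decomposing $\ell_2^0 (V)$ along the bipartition and recognizing the restriction of $M$ to this subspace as an off-diagonal block operator whose two blocks are $M_0\vert_{\ell_2^0 (V_0)}$ and $M_1\vert_{\ell_2^0 (V_1)}$. First I would record the orthogonal decomposition $\ell_2^0 (V) = \ell_2^0 (V_0) \oplus \ell_2^0 (V_1)$, which is immediate from the definitions: the two summands are supported on disjoint vertex sets, hence orthogonal in $\ell_2(V)$, and a function is zero-sum on each side exactly when its restrictions to $V_0$ and $V_1$ are each zero-sum.

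Next I would check, using biregularity and a double-counting of edges, that $M_0$ maps $\ell_2^0 (V_0)$ into $\ell_2^0 (V_1)$ and $M_1$ maps $\ell_2^0 (V_1)$ into $\ell_2^0 (V_0)$: for $\phi \in \ell_2 (V_0)$ one computes $\sum_{v \in V_1} (M_0 \phi)(v) = \tfrac{d_0}{d_1}\sum_{u \in V_0} \phi(u)$, which vanishes when $\phi$ is zero-sum. Consequently, writing $\phi = \phi_0 + \phi_1$ with $\phi_i \in \ell_2^0 (V_i)$, we have $M\phi = M_1\phi_1 + M_0\phi_0$ with $M_0\phi_0 \in \ell_2^0 (V_1)$ and $M_1\phi_1 \in \ell_2^0 (V_0)$ orthogonal, so $\Vert M\phi \Vert^2 = \Vert M_0\phi_0\Vert^2 + \Vert M_1\phi_1\Vert^2$. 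Taking $\phi$ supported on a single side shows $\Vert M\vert_{\ell_2^0 (V)}\Vert \geq \Vert M_i\vert_{\ell_2^0 (V_i)}\Vert$ for $i=0,1$, while the Pythagorean identity together with $\Vert\phi\Vert^2 = \Vert\phi_0\Vert^2 + \Vert\phi_1\Vert^2$ gives $\Vert M\vert_{\ell_2^0 (V)}\Vert = \max\{\Vert M_0\vert_{\ell_2^0 (V_0)}\Vert,\ \Vert M_1\vert_{\ell_2^0 (V_1)}\Vert\}$.

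It then remains to show $\Vert M_0\vert_{\ell_2^0 (V_0)}\Vert = \Vert M_1\vert_{\ell_2^0 (V_1)}\Vert$. For this I would verify directly that $M_1 \colon \ell_2(V_1)\to \ell_2(V_0)$ is the adjoint of $M_0 \colon \ell_2(V_0)\to \ell_2(V_1)$ with respect to the weighted inner products: both $\langle M_0\phi,\psi\rangle$ and $\langle\phi, M_1\psi\rangle$ unfold to $\sum_{\{u,v\}\in E}\phi(u)\overline{\psi(v)}$ once the weights $d_0, d_1$ cancel the normalizations $1/d_0, 1/d_1$. Since $M_0$ maps $\ell_2^0 (V_0)$ to $\ell_2^0 (V_1)$ and sends $\chi_{V_0}$ to $\chi_{V_1}$ (and symmetrically for $M_1$), the splitting $\ell_2(V_i) = \ell_2^0 (V_i)\oplus \mathbb{C}\chi_{V_i}$ is preserved, so $M_1\vert_{\ell_2^0 (V_1)}$ is the adjoint of $M_0\vert_{\ell_2^0 (V_0)}$. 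The elementary fact $\Vert B\Vert = \Vert B^*\Vert$ then yields the equality of norms, and combining with Proposition \ref{connection - spec gap and M norm prop} finishes the proof.

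There is no genuine obstacle here: the argument is entirely linear-algebraic. The only point requiring care is the bookkeeping with the two weighted inner products, namely confirming that $M_1 = M_0^*$ and that each $M_i$ preserves the zero-sum subspace; once these are in hand, everything follows from the orthogonal decomposition $\ell_2^0 (V) = \ell_2^0 (V_0)\oplus \ell_2^0 (V_1)$ and $\Vert B\Vert = \Vert B^*\Vert$.
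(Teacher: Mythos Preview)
Your proposal is correct and follows essentially the same route as the paper's proof: both establish $M_1 = M_0^*$ (hence equal operator norms on the zero-sum subspaces), then compare $\Vert M\vert_{\ell_2^0(V)}\Vert$ with $\Vert M_i\vert_{\ell_2^0(V_i)}\Vert$ via the orthogonal splitting $\phi = \phi_0 + \phi_1$ and the Pythagorean identity, and finally invoke Proposition \ref{connection - spec gap and M norm prop}. The only differences are cosmetic---you verify explicitly (via double counting and unfolding the inner products) two facts the paper merely asserts, and you phrase the outcome as $\Vert M\vert_{\ell_2^0(V)}\Vert = \max\{\Vert M_0\vert_{\ell_2^0(V_0)}\Vert,\Vert M_1\vert_{\ell_2^0(V_1)}\Vert\}$ before collapsing the max, whereas the paper establishes the adjointness first and then bounds in both directions.
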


\begin{proof}
Note that $M_0$ and $M_1$ are adjoint operators, i.e., $M_0^* = M_1$ (this can be seen either by direct computation or by recalling the fact that $M$ is self-adjoint and $M_i = M \vert_{\ell_2 (V_i)}$). Therefore $(M_0 \vert_{\ell_2^0 (V_0)} )^* = M_1 \vert_{\ell_2^0 (V_1)}$, which yields that
$\Vert M_0 \vert_{\ell_2^0 (V_0)} \Vert = \Vert M_1 \vert_{\ell_2^0 (V_1)} \Vert$ (adjoint operators have the same operator norm).

By Proposition \ref{connection - spec gap and M norm prop}, it is enough to prove that
$$ \Vert M \vert_{\ell_2^0 (V)} \Vert =  \Vert M_0 \vert_{\ell_2^0 (V_0)} \Vert.$$

First we show that
$$\Vert M \vert_{\ell_2^0 (V)} \Vert \geq \Vert M_0 \vert_{\ell_2^0 (V_0)} \Vert.$$
Let $\phi \in \ell_2^0 (V_0)$ with $\Vert \phi \Vert =1$. Define $\psi \in  \ell_2^0 (V)$ as
$$\psi (v) = \begin{cases}
\phi (v) & v \in V_0 \\
0 & v \in V_1
\end{cases}.$$
By this definition $\Vert \psi \Vert =1$ and
$$\Vert M_0 \phi \Vert = \Vert M \psi \Vert \leq \Vert M \vert_{\ell_2^0 (V)} \Vert.$$
This is true for every $\phi \in \ell_2^0 (V_0)$ with $\Vert \phi \Vert =1$ and therefore $\Vert M \vert_{\ell_2^0 (V)} \Vert \geq  \Vert M_0 \vert_{\ell_2^0 (V_0)} \Vert$.

In the other direction, we note that for every $\phi \in \ell_2^0 (V)$, we can define $\phi_i \in \ell_2^0 (V)$ as $\phi_i = \phi \vert_{\ell_2^0 (V_i)}$. We note that $V_0, V_1$ are disjoint and $V_0 \cup V_1 = V$ implies that
$$\Vert \phi \Vert^2 = \Vert \phi_0 \Vert^2+  \Vert \phi_1 \Vert^2.$$
By the definition of $M_0, M_1$,
\begin{dmath*}
\Vert M \phi \Vert^2 = d_1 \sum_{v \in V_1} \Vert (M \phi) (v) \Vert^2 + d_0 \sum_{v \in V_0} \Vert (M \phi (v)) \Vert^2 \\
= d_1 \sum_{v \in V_1} \Vert (M_0 \phi_0) (v) \Vert^2 + d_0 \sum_{v \in V_0} \Vert (M_1 \phi_1 (v)) \Vert^2 \\
\leq \Vert M_0 \vert_{\ell_2^0 (V_0)} \Vert^2 \Vert \phi_0 \Vert^2 +  \Vert M_1 \vert_{\ell_2^0 (V_1)} \Vert^2 \Vert \phi_1 \Vert^2 \\
= \Vert M_0 \vert_{\ell_2^0 (V_0)} \Vert^2 (\Vert \phi_0 \Vert^2 + \Vert \phi_1 \Vert^2) \\
= \Vert M_0 \vert_{\ell_2^0 (V_0)} \Vert^2 \Vert \phi \Vert^2.
\end{dmath*}
Therefore $\Vert M \vert_{\ell_2^0 (V)} \Vert \leq  \Vert M_0 \vert_{\ell_2^0 (V_0)} \Vert$ as needed.
\end{proof}

\subsection{$\varepsilon$-orthogonality}
\label{varepsilon orth section}

We recall the definitions given above regarding $\varepsilon$-orthogonality:
\begin{definition}
\label{varepsilon orthogonal subspaces def}
Let $\mathcal{H}$ be a Hilbert space and let $\mathcal{H}_1, \mathcal{H}_2 \subseteq \mathcal{H}$ be closed subspaces of $\mathcal{H}$. Denote
\begin{dmath*}
\theta (\mathcal{H}_1, \mathcal{H}_2) =
{\inf \lbrace \kappa :  \vert \langle x_1, x_2 \rangle \vert \leq \kappa \Vert x_1 \Vert \Vert x_2 \Vert, \forall x_1 \in \mathcal{H}_1 \cap (\mathcal{H}_1 \cap  \mathcal{H}_2)^\perp, \forall x_2 \in \mathcal{H}_2 \cap (\mathcal{H}_1 \cap  \mathcal{H}_2)^\perp  \rbrace}.
\end{dmath*}
For $0 \leq \varepsilon \leq 1$, $\mathcal{H}_1, \mathcal{H}_2$ will be called $\varepsilon$-orthogonal if   $\theta (\mathcal{H}_1, \mathcal{H}_2) \leq \varepsilon$.
\end{definition}

The definition of $\varepsilon$-orthogonality can be stated using orthogonal projections as follows (for proof see for instance \cite{DeutschBook}[Lemma 9.5]):

\begin{proposition}
\label{orthogonality as projection prop}
Let $\mathcal{H}$ be a Hilbert space and let $\mathcal{H}_1, \mathcal{H}_2 \subseteq \mathcal{H}$ be closed subspaces of $\mathcal{H}$. Denote $P_{\mathcal{H}_1}, P_{\mathcal{H}_2}, P_{\mathcal{H}_1 \cap \mathcal{H}_2}$ to be the orthogonal projections on $\mathcal{H}_1, \mathcal{H}_2, \mathcal{H}_1 \cap \mathcal{H}_2$ respectively. Then $\theta (\mathcal{H}_1, \mathcal{H}_2) = \Vert P_{\mathcal{H}_2} P_{\mathcal{H}_1} - P_{\mathcal{H}_1 \cap \mathcal{H}_2} \Vert = \Vert P_{\mathcal{H}_1} P_{\mathcal{H}_2} - P_{\mathcal{H}_1 \cap \mathcal{H}_2} \Vert$, where $\Vert . \Vert$ denotes the operator norm.

As a result, $\mathcal{H}_1, \mathcal{H}_2$ are $\varepsilon$-orthogonal if and only if $\Vert P_{\mathcal{H}_2} P_{\mathcal{H}_1} - P_{\mathcal{H}_1 \cap \mathcal{H}_2} \Vert \leq \varepsilon$ (or $\Vert P_{\mathcal{H}_1} P_{\mathcal{H}_2} - P_{\mathcal{H}_1 \cap \mathcal{H}_2} \Vert \leq \varepsilon$).
\end{proposition}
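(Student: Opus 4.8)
The plan is to reduce the whole statement to one operator identity plus a direct computation of an operator norm from the definition of $\theta$. Write $N = \mathcal{H}_1 \cap \mathcal{H}_2$ and, for $i=1,2$, set $\mathcal{H}_i^{\circ} = \mathcal{H}_i \cap N^{\perp}$. Since $N \subseteq \mathcal{H}_i$, one has the orthogonal decomposition $\mathcal{H}_i = N \oplus \mathcal{H}_i^{\circ}$ (for $x \in \mathcal{H}_i$ the vector $x - P_N x$ lies in $\mathcal{H}_i \cap N^{\perp}$), and therefore $P_{\mathcal{H}_i} = P_N + P_{\mathcal{H}_i^{\circ}}$ with $P_N P_{\mathcal{H}_i^{\circ}} = P_{\mathcal{H}_i^{\circ}} P_N = 0$; note also $\mathcal{H}_1^{\circ} \cap \mathcal{H}_2^{\circ} \subseteq N \cap N^{\perp} = \{0\}$.

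The first step is the identity $P_{\mathcal{H}_2} P_{\mathcal{H}_1} - P_N = P_{\mathcal{H}_2^{\circ}} P_{\mathcal{H}_1^{\circ}}$, and symmetrically $P_{\mathcal{H}_1} P_{\mathcal{H}_2} - P_N = P_{\mathcal{H}_1^{\circ}} P_{\mathcal{H}_2^{\circ}}$. Expanding $P_{\mathcal{H}_2} P_{\mathcal{H}_1} = (P_N + P_{\mathcal{H}_2^{\circ}})(P_N + P_{\mathcal{H}_1^{\circ}})$ and using $P_N^2 = P_N$ together with the vanishing of the two mixed products $P_N P_{\mathcal{H}_1^{\circ}}$ and $P_{\mathcal{H}_2^{\circ}} P_N$ noted above, the four terms collapse to $P_N + P_{\mathcal{H}_2^{\circ}} P_{\mathcal{H}_1^{\circ}}$, and subtracting $P_N$ gives the claim.

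The second step is to show $\|P_{\mathcal{H}_2^{\circ}} P_{\mathcal{H}_1^{\circ}}\| = \theta(\mathcal{H}_1,\mathcal{H}_2)$. Because $P_{\mathcal{H}_1^{\circ}}$ is a norm-one projection onto $\mathcal{H}_1^{\circ}$, the supremum defining the operator norm is attained on $\mathcal{H}_1^{\circ}$, so $\|P_{\mathcal{H}_2^{\circ}} P_{\mathcal{H}_1^{\circ}}\| = \sup\{\|P_{\mathcal{H}_2^{\circ}} x_1\| : x_1 \in \mathcal{H}_1^{\circ},\ \|x_1\| \le 1\}$; and for fixed $x_1 \in \mathcal{H}_1^{\circ}$, since $P_{\mathcal{H}_2^{\circ}}$ is self-adjoint and fixes $\mathcal{H}_2^{\circ}$ pointwise, $\|P_{\mathcal{H}_2^{\circ}} x_1\| = \sup\{|\langle x_1, x_2\rangle| : x_2 \in \mathcal{H}_2^{\circ},\ \|x_2\| \le 1\}$. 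Combining the two, $\|P_{\mathcal{H}_2^{\circ}} P_{\mathcal{H}_1^{\circ}}\|$ equals the supremum of $|\langle x_1,x_2\rangle|$ over unit vectors $x_i \in \mathcal{H}_i^{\circ}$, which by homogeneity is exactly the least $\kappa$ with $|\langle x_1,x_2\rangle| \le \kappa \|x_1\|\|x_2\|$ for all $x_1 \in \mathcal{H}_1^{\circ}$, $x_2 \in \mathcal{H}_2^{\circ}$, i.e. $\theta(\mathcal{H}_1,\mathcal{H}_2)$ by Definition \ref{varepsilon orthogonal subspaces def}. Finally $\|P_{\mathcal{H}_1^{\circ}} P_{\mathcal{H}_2^{\circ}}\| = \|(P_{\mathcal{H}_2^{\circ}} P_{\mathcal{H}_1^{\circ}})^{*}\| = \|P_{\mathcal{H}_2^{\circ}} P_{\mathcal{H}_1^{\circ}}\|$, which gives the equality of all three quantities; the ``as a result'' clause is then immediate.

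The only step I expect to require any care is the operator identity $P_{\mathcal{H}_2} P_{\mathcal{H}_1} - P_N = P_{\mathcal{H}_2^{\circ}} P_{\mathcal{H}_1^{\circ}}$ — specifically verifying that the two $P_N$-contaminated cross terms genuinely vanish and that $P_{\mathcal{H}_i} = P_N + P_{\mathcal{H}_i^{\circ}}$; once that is in hand everything reduces to unwinding the definition of $\theta$. The degenerate cases ($\mathcal{H}_i^{\circ} = \{0\}$, where both sides are $0$) and the passage from ``supremum over unit balls'' to ``infimum of admissible constants $\kappa$'' are routine and I would not belabor them. (One may instead simply cite \cite{DeutschBook}[Lemma 9.5], but the argument above is short enough to reproduce.)
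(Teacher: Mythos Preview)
Your proof is correct. The paper does not actually prove this proposition at all: it simply cites \cite{DeutschBook}[Lemma 9.5] and moves on, whereas you supply the argument in full (and even note at the end that one could cite the same reference instead). Your reduction via the orthogonal splitting $P_{\mathcal{H}_i} = P_N + P_{\mathcal{H}_i^{\circ}}$ leading to the identity $P_{\mathcal{H}_2}P_{\mathcal{H}_1} - P_N = P_{\mathcal{H}_2^{\circ}}P_{\mathcal{H}_1^{\circ}}$, followed by identifying $\|P_{\mathcal{H}_2^{\circ}}P_{\mathcal{H}_1^{\circ}}\|$ with the bilinear supremum defining $\theta$, is the standard argument and is cleanly executed; there is nothing to compare against in the paper itself.
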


\begin{definition}
Let $G$ be a group with subgroups $K_{\lbrace 0 \rbrace}, K_{\lbrace 1 \rbrace} <G$, such that $\langle K_{\lbrace 0 \rbrace}, K_{\lbrace 1 \rbrace} \rangle = G$. For a unitary representation $(\pi, \mathcal{H})$, we denote
$$\mathcal{H}^{\pi (K_{\lbrace 0 \rbrace})} = \lbrace x \in \mathcal{H} : \forall g \in K_{\lbrace 0 \rbrace}, \pi (g) x = x \rbrace,$$
$$\mathcal{H}^{\pi (K_{\lbrace 1 \rbrace})} = \lbrace x \in \mathcal{H} : \forall g \in K_{\lbrace 1 \rbrace}, \pi (g) x = x \rbrace.$$

We define $K_{\lbrace 0 \rbrace}, K_{\lbrace 1 \rbrace}$ to be $\varepsilon$-orthogonal if for every unitary representation $(\pi, \mathcal{H})$ of $G$, $\mathcal{H}^{\pi (K_{\lbrace 0 \rbrace})}$ and $\mathcal{H}^{\pi (K_{\lbrace 1 \rbrace})}$ are $\varepsilon$-orthogonal.
\end{definition}

\begin{proposition}
\label{H_K_0 cap H_K_1 = G prop}
Let $G, K_{\lbrace 0 \rbrace}, K_{\lbrace 1 \rbrace} <G$ such that $\langle K_{\lbrace 0 \rbrace}, K_{\lbrace 1 \rbrace} \rangle = G$ and $(\pi, \mathcal{H})$ a unitary representation of $G$. Denote
$$\mathcal{H}^{\pi (G)} = \lbrace x \in \mathcal{H} : \forall g \in G, \pi (g) x = x \rbrace.$$
Then $\mathcal{H}^{\pi (G)} = \mathcal{H}^{\pi (K_{\lbrace 0 \rbrace})} \cap \mathcal{H}^{\pi (K_{\lbrace 1 \rbrace})}$. 
\end{proposition}

\begin{proof}
It is obvious that $\mathcal{H}^{\pi (G)} \subseteq \mathcal{H}^{\pi (K_{\lbrace 0 \rbrace})} \cap \mathcal{H}^{\pi (K_{\lbrace 1 \rbrace})}$. In the other direction, we will show that for every $x \in \mathcal{H}^{\pi (K_{\lbrace 0 \rbrace})} \cap \mathcal{H}^{\pi (K_{\lbrace 1 \rbrace})}$ and $g \in G$, $g.x = x$. Fix such $x$ and $g$. By our assumption, $\langle K_{\lbrace 0 \rbrace}, K_{\lbrace 1 \rbrace} \rangle = G$, thus there are $h_1,...,h_n \in K_{\lbrace 0 \rbrace} \cup K_{\lbrace 1 \rbrace}$ such that $g=h_1...h_n$. Then,
$$\pi (g) x = \pi (h_1 ... h_n) x = \pi (h_1) ... \pi (h_n) x = \pi (h_1) ... \pi (h_{n-1}) x =...=x,$$
i.e., $x \in \mathcal{H}^{\pi (G)}$ as needed.
\end{proof}

We saw above that the $\varepsilon$-orthogonality between subspaces can be described in terms of orthogonal projections. In the case where $G$ is finite, these projections can be defined explicitly using the group algebra. Define the following elements in the group algebra $\mathbb{C}[G]$:
$$k_{ \lbrace 0 \rbrace} = \sum_{g \in K_{\lbrace 0 \rbrace}} \dfrac{1}{\vert K_{\lbrace 0 \rbrace} \vert} g,$$
$$k_{ \lbrace 1 \rbrace} = \sum_{g \in K_{\lbrace 1 \rbrace}} \dfrac{1}{\vert K_{\lbrace 1 \rbrace} \vert} g,$$
$$k_{G} = \sum_{g \in G} \dfrac{1}{\vert G \vert} g.$$

\begin{proposition}
\label{alg property of k's prop}
Let $G$ be a finite group and $K_{ \lbrace 0 \rbrace}$, $K_{ \lbrace 1 \rbrace}$, $k_{ \lbrace 0 \rbrace}$, $k_{ \lbrace 1 \rbrace}$ as above. Then
\begin{enumerate}
\item For every $i=0,1$ and every $g \in K_{\lbrace i \rbrace}$, we have $g k_{ \lbrace i \rbrace} =  k_{ \lbrace i \rbrace} g = k_{ \lbrace i \rbrace}$. Also, for every $g \in G$, $g k_G = k_G g = k_G$.
\item For every $i=0,1$, $(k_{ \lbrace i \rbrace})^2 = (k_{ \lbrace i \rbrace})^* = k_{ \lbrace i \rbrace}$. Also, $(k_G)^2 = (k_G)^* = k_G$.
%\item For every $i=0,1$, $k_{ \lbrace i \rbrace} k_G = k_G k_{ \lbrace i \rbrace} = k_G$.
%\item For every $i=0,1$, $(k_{ \lbrace i \rbrace} - k_G )^2 = (k_{ \lbrace i \rbrace} - k_G )^* = k_{ \lbrace i \rbrace} - k_G$.
\end{enumerate}
\end{proposition}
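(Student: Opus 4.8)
The plan is to derive all three statements from a single elementary observation: for a finite subgroup $H \leq G$, left translation by an element of $H$ is a bijection of $H$ onto itself (and likewise right translation and inversion), so the uniform average of $H$ inside $\mathbb{C}[G]$ is invariant under these operations. I would first record this for $H = K_{\lbrace i \rbrace}$. Given $g \in K_{\lbrace i \rbrace}$, the map $h \mapsto gh$ permutes $K_{\lbrace i \rbrace}$, hence
\[
g k_{\lbrace i \rbrace} = \frac{1}{\vert K_{\lbrace i \rbrace}\vert}\sum_{h \in K_{\lbrace i \rbrace}} gh = \frac{1}{\vert K_{\lbrace i \rbrace}\vert}\sum_{h' \in K_{\lbrace i \rbrace}} h' = k_{\lbrace i \rbrace},
\]
and the identical computation with $h \mapsto hg$ gives $k_{\lbrace i \rbrace} g = k_{\lbrace i \rbrace}$. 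Replacing $K_{\lbrace i \rbrace}$ by $G$ verbatim (every $g \in G$ permutes $G$ by translation) yields $g k_G = k_G g = k_G$ for all $g \in G$. This settles item (1).

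For item (2), idempotency is an immediate consequence of item (1): expanding $k_{\lbrace i \rbrace}^2 = \bigl(\frac{1}{\vert K_{\lbrace i \rbrace}\vert}\sum_{h \in K_{\lbrace i \rbrace}} h\bigr)\,k_{\lbrace i \rbrace}$ and pulling the sum out, every summand $h\,k_{\lbrace i \rbrace}$ equals $k_{\lbrace i \rbrace}$, so $k_{\lbrace i \rbrace}^2 = \frac{\vert K_{\lbrace i \rbrace}\vert}{\vert K_{\lbrace i \rbrace}\vert}\,k_{\lbrace i \rbrace} = k_{\lbrace i \rbrace}$, and the same argument applies to $k_G$. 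For self-adjointness I would first fix the standard $*$-structure on $\mathbb{C}[G]$, namely $\bigl(\sum_g c_g g\bigr)^* = \sum_g \overline{c_g}\, g^{-1}$ (the involution for which the regular representation, and more generally every unitary representation, is $*$-preserving); this is the involution implicit in the statement. Since the coefficients of $k_{\lbrace i \rbrace}$ are the real number $\frac{1}{\vert K_{\lbrace i \rbrace}\vert}$ and $K_{\lbrace i \rbrace}$ is closed under inversion, $h \mapsto h^{-1}$ permutes $K_{\lbrace i \rbrace}$, so $k_{\lbrace i \rbrace}^* = \frac{1}{\vert K_{\lbrace i \rbrace}\vert}\sum_{h \in K_{\lbrace i \rbrace}} h^{-1} = k_{\lbrace i \rbrace}$; identically for $k_G$.

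There is no genuine obstacle: this is routine bookkeeping about averaging idempotents in a finite group algebra. The only point worth a sentence of care is to pin down which involution on $\mathbb{C}[G]$ is meant so that ``$(k_{\lbrace i \rbrace})^* = k_{\lbrace i \rbrace}$'' is unambiguous; after that the three displayed computations finish the proof. These self-adjoint idempotents are exactly the elements that, in any unitary representation $(\pi,\mathcal{H})$, act as the orthogonal projections onto $\mathcal{H}^{\pi(K_{\lbrace i \rbrace})}$ and $\mathcal{H}^{\pi(G)}$, which is the use they are being set up for in the proof of Theorem \ref{orthogonality-spec thm}.
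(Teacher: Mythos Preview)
Your proof is correct and follows essentially the same approach as the paper: translation by a subgroup element permutes the subgroup to get item (1), then item (1) yields idempotency, and closure under inversion (with real coefficients) gives self-adjointness. The only addition you make is to spell out explicitly which involution on $\mathbb{C}[G]$ is being used, which the paper leaves implicit.
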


\begin{proof}
We will prove the assertions for $k_{ \lbrace 0 \rbrace}$, the proofs for $k_{ \lbrace 1 \rbrace}$ and $k_G$ are similar. Let $g \in K_{\lbrace 0 \rbrace}$, then
$$g k_{ \lbrace 0 \rbrace} = \sum_{g' \in K_{\lbrace 0 \rbrace}} \dfrac{1}{\vert K_{\lbrace 0 \rbrace} \vert} g g' = \sum_{g' \in g^{-1} K_{\lbrace 0 \rbrace}} \dfrac{1}{\vert K_{\lbrace 0 \rbrace} \vert} g' = \sum_{g' \in K_{\lbrace 0 \rbrace}} \dfrac{1}{\vert K_{\lbrace 0 \rbrace} \vert} g' = k_{ \lbrace 0 \rbrace} .$$
Similarly,
$$ k_{ \lbrace 0 \rbrace} g = \sum_{g' \in K_{\lbrace 0 \rbrace}} \dfrac{1}{\vert K_{\lbrace 0 \rbrace} \vert} g' g = \sum_{g' \in  K_{\lbrace 0 \rbrace} g^{-1}} \dfrac{1}{\vert K_{\lbrace 0 \rbrace} \vert} g' = \sum_{g' \in K_{\lbrace 0 \rbrace}} \dfrac{1}{\vert K_{\lbrace 0 \rbrace} \vert} g' = k_{ \lbrace 0 \rbrace} .$$

For the second assertion, we have that by the first assertion
$$(k_{ \lbrace 0 \rbrace})^2 = \sum_{g \in K_{\lbrace 0 \rbrace}} \dfrac{1}{\vert K_{\lbrace 0 \rbrace} \vert} (g k_{ \lbrace 0 \rbrace}) = \dfrac{1}{\vert K_{\lbrace 0 \rbrace} \vert} \vert K_{\lbrace 0 \rbrace} \vert k_{ \lbrace 0 \rbrace} = k_{ \lbrace 0 \rbrace}.$$
Also,
$$(k_{ \lbrace 0 \rbrace})^* = \sum_{g \in K_{\lbrace 0 \rbrace}} \overline{ \dfrac{1}{\vert K_{\lbrace 0 \rbrace} \vert} } (g^{-1}) =  \sum_{g \in K_{\lbrace 0 \rbrace}}  \dfrac{1}{\vert K_{\lbrace 0 \rbrace} \vert} (g^{-1}) = k_{ \lbrace 0 \rbrace}.$$

%For the third assertion, note that for every $g \in G$, $g k_G = k_G g = k_G$ and therefore for $i=0,1$,
%$$k_{ \lbrace i \rbrace} k_G = \sum_{g \in K_{\lbrace i \rbrace}} \dfrac{1}{\vert K_{\lbrace i \rbrace} \vert} g k_G = \sum_{g \in K_{\lbrace i \rbrace}} \dfrac{1}{\vert K_{\lbrace i \rbrace} \vert} \vert K_{\lbrace i \rbrace} \vert k_G = k_G,$$
%and similarly $k_G k_{ \lbrace i \rbrace} = k_G$.

%Last, we note that by the second assertion, for $i=0,1$ we have that
%$$(k_{ \lbrace i \rbrace} - k_G )^* = (k_{ \lbrace i \rbrace})^* - (k_G )^* = k_{ \lbrace i \rbrace} - k_G.$$
%Also, by the third assertion
%$$(k_{ \lbrace i \rbrace} - k_G )^2 = (k_{ \lbrace i \rbrace})^2 - k_{ \lbrace i \rbrace} k_G - k_G k_{ \lbrace i \rbrace} + (k_G)^2 = k_{ \lbrace i \rbrace} - k_G - k_G + k_G = k_{ \lbrace i \rbrace} - k_G.$$
\end{proof}

\begin{corollary}
\label{pi(k)'s are projection coro}
Let $G$ be a finite group with subgroups $K_{\lbrace 0 \rbrace}$, $K_{\lbrace 1 \rbrace}$ as above and $(\pi, \mathcal{H})$ be a unitary representation of $G$. Then $\pi (k_G),\pi (k_{\lbrace 0 \rbrace}), \pi (k_{\lbrace 1 \rbrace})$ are orthogonal projections on $\mathcal{H}^{\pi(G)}, \mathcal{H}^{\pi(K_{\lbrace 0 \rbrace})},  \mathcal{H}^{\pi(K_{\lbrace 1 \rbrace})}$ respectively.

%Also, $\pi (k_{\lbrace 0 \rbrace} -k_G), \pi (k_{\lbrace 1 \rbrace} -k_G)$ are orthogonal projections on $\mathcal{H}^{K_{\lbrace 0 \rbrace}} \cap (\mathcal{H}^{G})^\perp,  \mathcal{H}^{K_{\lbrace 1 \rbrace}} \cap (\mathcal{H}^{G})^\perp$ respectively.
\end{corollary}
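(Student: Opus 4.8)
The plan is to extend $\pi$ linearly to a $*$-algebra homomorphism $\mathbb{C}[G] \to B(\mathcal{H})$ (setting $\pi(g^{-1}) = \pi(g)^*$ since $\pi$ is unitary), so that the algebraic identities of Proposition \ref{alg property of k's prop} transfer verbatim to the operators $\pi(k_G), \pi(k_{\lbrace 0 \rbrace}), \pi(k_{\lbrace 1 \rbrace})$. First I would record that, by Proposition \ref{alg property of k's prop}(2), each of $k_G, k_{\lbrace 0 \rbrace}, k_{\lbrace 1 \rbrace}$ is a self-adjoint idempotent in $\mathbb{C}[G]$; applying $\pi$ gives that $\pi(k_{\lbrace i \rbrace})^2 = \pi(k_{\lbrace i \rbrace}) = \pi(k_{\lbrace i \rbrace})^*$ (and likewise for $k_G$). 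A bounded self-adjoint idempotent on a Hilbert space is precisely an orthogonal projection onto its image, so it only remains to identify the images.

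Next I would compute the image of $\pi(k_{\lbrace 0 \rbrace})$, the arguments for $\pi(k_{\lbrace 1 \rbrace})$ and $\pi(k_G)$ being identical. For the inclusion $\im \pi(k_{\lbrace 0 \rbrace}) \subseteq \mathcal{H}^{\pi(K_{\lbrace 0 \rbrace})}$: given $x \in \mathcal{H}$ and $g \in K_{\lbrace 0 \rbrace}$, Proposition \ref{alg property of k's prop}(1) gives $g k_{\lbrace 0 \rbrace} = k_{\lbrace 0 \rbrace}$ in $\mathbb{C}[G]$, hence $\pi(g)\pi(k_{\lbrace 0 \rbrace}) x = \pi(k_{\lbrace 0 \rbrace}) x$, so $\pi(k_{\lbrace 0 \rbrace}) x$ is $K_{\lbrace 0 \rbrace}$-fixed. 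For the reverse inclusion: if $x \in \mathcal{H}^{\pi(K_{\lbrace 0 \rbrace})}$, then
$$\pi(k_{\lbrace 0 \rbrace}) x = \frac{1}{\vert K_{\lbrace 0 \rbrace} \vert} \sum_{g \in K_{\lbrace 0 \rbrace}} \pi(g) x = \frac{1}{\vert K_{\lbrace 0 \rbrace} \vert} \cdot \vert K_{\lbrace 0 \rbrace} \vert \cdot x = x,$$
so $x \in \im \pi(k_{\lbrace 0 \rbrace})$. Thus $\im \pi(k_{\lbrace 0 \rbrace}) = \mathcal{H}^{\pi(K_{\lbrace 0 \rbrace})}$, and since $\pi(k_{\lbrace 0 \rbrace})$ is an orthogonal projection, it is the orthogonal projection onto $\mathcal{H}^{\pi(K_{\lbrace 0 \rbrace})}$, as claimed; the same computation with $K_{\lbrace 1 \rbrace}$ and with $G$ finishes the proof (and one may note, via Observation \ref{H_K_0 cap H_K_1 = G obs}, that $\pi(k_G)$ projects onto $\mathcal{H}^{\pi(K_{\lbrace 0 \rbrace})} \cap \mathcal{H}^{\pi(K_{\lbrace 1 \rbrace})}$).

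There is essentially no obstacle here: the only points requiring care are the passage from the group-algebra identities to operator identities via the linear $*$-extension of $\pi$, and the standard fact that a bounded self-adjoint idempotent is an orthogonal projection onto its range; both are routine. This corollary will then be the bridge letting us rewrite $\theta(\mathcal{H}^{\pi(K_{\lbrace 0 \rbrace})}, \mathcal{H}^{\pi(K_{\lbrace 1 \rbrace})})$ from Proposition \ref{orthogonality as projection prop} as an operator norm of $\pi(k_{\lbrace 1 \rbrace} k_{\lbrace 0 \rbrace} - k_G)$, which is what will eventually be matched against the spectral gap of $X(G,(K_{\lbrace 0 \rbrace},K_{\lbrace 1 \rbrace}))$ in the proof of Theorem \ref{orthogonality-spec thm}.
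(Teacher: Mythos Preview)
Your proof is correct and follows essentially the same approach as the paper: use Proposition \ref{alg property of k's prop} to conclude each $\pi(k_{\lbrace i\rbrace})$ is a self-adjoint idempotent (hence an orthogonal projection), then identify the image via the two inclusions using $g k_{\lbrace 0\rbrace}=k_{\lbrace 0\rbrace}$ and the averaging computation on fixed vectors. If anything, your labeling of the two inclusions is cleaner than the paper's, which has them reversed in the text even though the underlying computations are identical.
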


\begin{proof}
By Proposition \ref{alg property of k's prop}, for every $i=0,1$,
$$(\pi (k_{ \lbrace i \rbrace}))^2 = (\pi (k_{ \lbrace i \rbrace}))^* = \pi (k_{ \lbrace i \rbrace}).$$
%$$(\pi (k_{ \lbrace i \rbrace} - k_G))^2 = (\pi (k_{ \lbrace i \rbrace} - k_G))^* = \pi (k_{ \lbrace i \rbrace} -k_G).$$
Also, $(\pi (k_G))^2 = ( \pi (k_G))^* = \pi (k_G)$. Therefore $\pi (k_{ \lbrace i \rbrace})$, $i=0,1$ and $\pi (k_G)$ are orthogonal projections.

We will only show that $\pi (k_{\lbrace 0 \rbrace})$ is an orthogonal projection on $\mathcal{H}^{\pi (K_{\lbrace 0 \rbrace})}$ (the cases of  $\pi (k_{\lbrace 1 \rbrace})$ and $\pi (k_G)$ are similar). We note that for every $x \in \mathcal{H}^{\pi (K_{\lbrace 0 \rbrace})}$,
$$\pi (k_{\lbrace 0 \rbrace}) x = \sum_{g \in K_{\lbrace 0 \rbrace}} \frac{1}{\vert K_{\lbrace 0 \rbrace} \vert} \pi (g)x =  \sum_{g \in K_{\lbrace 0 \rbrace}} \frac{1}{\vert K_{\lbrace 0 \rbrace} \vert} x = x,$$
and therefore $\im (\pi (k_{\lbrace 0 \rbrace})) \subseteq \mathcal{H}^{\pi (K_{\lbrace 0 \rbrace})}$. In the other direction, we note that for every $x \in \im (\pi (k_{\lbrace 0 \rbrace}))$ we have that $x = \pi (k_{\lbrace 0 \rbrace}) x$. Therefore for every $g \in K_{\lbrace 0 \rbrace}$, using Proposition \ref{alg property of k's prop},
$$\pi (g) x= \pi (g) \pi (k_{\lbrace 0 \rbrace}) x = \pi (g k_{\lbrace 0 \rbrace}) x = \pi ( k_{\lbrace 0 \rbrace}) x = x.$$
Therefore $\mathcal{H}^{\pi (K_{\lbrace 0 \rbrace})} \subseteq \im (\pi (k_{\lbrace 0 \rbrace}))$ as needed.
\end{proof}

The above discussion leads to the following corollary:

\begin{corollary}
\label{norm condition of epsilon orthogonality coro}
Let $G$ be a finite group  with subgroups $K_{ \lbrace 0 \rbrace}$, $K_{ \lbrace 1 \rbrace}$ as above. Then for any unitary representation $(\pi, \mathcal{H})$, the subspaces $\mathcal{H}^{\pi (K_{\lbrace 0 \rbrace})}$ and $\mathcal{H}^{\pi (K_{\lbrace 1 \rbrace})}$ are $\varepsilon$-orthogonal if and only if $\Vert \pi (k_{ \lbrace 1 \rbrace} k_{ \lbrace 0 \rbrace} - k_G) \Vert \leq \varepsilon$.
\end{corollary}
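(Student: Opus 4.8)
The plan is to assemble this directly from the three preceding results: Corollary \ref{pi(k)'s are projection coro}, Observation \ref{H_K_0 cap H_K_1 = G obs}, and Proposition \ref{orthogonality as projection prop}. First I would fix a unitary representation $(\pi, \mathcal{H})$ and recall that $\pi$ extends to an algebra homomorphism $\mathbb{C}[G] \to B(\mathcal{H})$, so in particular $\pi(k_{\lbrace 1 \rbrace} k_{\lbrace 0 \rbrace}) = \pi(k_{\lbrace 1 \rbrace}) \pi(k_{\lbrace 0 \rbrace})$ and $\pi(k_{\lbrace 1 \rbrace} k_{\lbrace 0 \rbrace} - k_G) = \pi(k_{\lbrace 1 \rbrace}) \pi(k_{\lbrace 0 \rbrace}) - \pi(k_G)$.

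Next I would identify each of these operators as an orthogonal projection onto the relevant subspace. By Corollary \ref{pi(k)'s are projection coro}, $\pi(k_{\lbrace 0 \rbrace}) = P_{\mathcal{H}^{\pi(K_{\lbrace 0 \rbrace})}}$, $\pi(k_{\lbrace 1 \rbrace}) = P_{\mathcal{H}^{\pi(K_{\lbrace 1 \rbrace})}}$, and $\pi(k_G) = P_{\mathcal{H}^{\pi(G)}}$. The one extra observation needed is that $\pi(k_G)$ is the projection onto $\mathcal{H}^{\pi(K_{\lbrace 0 \rbrace})} \cap \mathcal{H}^{\pi(K_{\lbrace 1 \rbrace})}$: this is exactly the content of Observation \ref{H_K_0 cap H_K_1 = G obs}, since $\langle K_{\lbrace 0 \rbrace}, K_{\lbrace 1 \rbrace} \rangle = G$ gives $\mathcal{H}^{\pi(G)} = \mathcal{H}^{\pi(K_{\lbrace 0 \rbrace})} \cap \mathcal{H}^{\pi(K_{\lbrace 1 \rbrace})}$. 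Substituting, $\pi(k_{\lbrace 1 \rbrace} k_{\lbrace 0 \rbrace} - k_G) = P_{\mathcal{H}^{\pi(K_{\lbrace 1 \rbrace})}} P_{\mathcal{H}^{\pi(K_{\lbrace 0 \rbrace})}} - P_{\mathcal{H}^{\pi(K_{\lbrace 0 \rbrace})} \cap \mathcal{H}^{\pi(K_{\lbrace 1 \rbrace})}}$.

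Finally I would invoke Proposition \ref{orthogonality as projection prop} with $\mathcal{H}_1 = \mathcal{H}^{\pi(K_{\lbrace 0 \rbrace})}$ and $\mathcal{H}_2 = \mathcal{H}^{\pi(K_{\lbrace 1 \rbrace})}$, which gives $\theta(\mathcal{H}^{\pi(K_{\lbrace 0 \rbrace})}, \mathcal{H}^{\pi(K_{\lbrace 1 \rbrace})}) = \Vert P_{\mathcal{H}^{\pi(K_{\lbrace 1 \rbrace})}} P_{\mathcal{H}^{\pi(K_{\lbrace 0 \rbrace})}} - P_{\mathcal{H}^{\pi(K_{\lbrace 0 \rbrace})} \cap \mathcal{H}^{\pi(K_{\lbrace 1 \rbrace})}} \Vert = \Vert \pi(k_{\lbrace 1 \rbrace} k_{\lbrace 0 \rbrace} - k_G) \Vert$. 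By Definition \ref{varepsilon orthogonal subspaces def}, $\mathcal{H}^{\pi(K_{\lbrace 0 \rbrace})}$ and $\mathcal{H}^{\pi(K_{\lbrace 1 \rbrace})}$ are $\varepsilon$-orthogonal precisely when this quantity is $\le \varepsilon$, which is the claim. There is no real obstacle here: the statement is a bookkeeping corollary of the projection-theoretic reformulation of $\varepsilon$-orthogonality, and the only point requiring a moment's care is making sure the intersection subspace $\mathcal{H}^{\pi(K_{\lbrace 0 \rbrace})} \cap \mathcal{H}^{\pi(K_{\lbrace 1 \rbrace})}$ coincides with $\mathcal{H}^{\pi(G)}$, which is handled by the generation hypothesis via Observation \ref{H_K_0 cap H_K_1 = G obs}.
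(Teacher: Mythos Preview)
Your proposal is correct and follows exactly the paper's own proof, which is the one-line ``Combine Corollary \ref{pi(k)'s are projection coro}, Observation \ref{H_K_0 cap H_K_1 = G obs} and Proposition \ref{orthogonality as projection prop}.'' You have simply unpacked this combination explicitly, including the routine remark that $\pi$ is an algebra homomorphism so $\pi(k_{\lbrace 1 \rbrace} k_{\lbrace 0 \rbrace} - k_G) = \pi(k_{\lbrace 1 \rbrace})\pi(k_{\lbrace 0 \rbrace}) - \pi(k_G)$.
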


\begin{proof}
Combine Corollary \ref{pi(k)'s are projection coro}, Proposition \ref{H_K_0 cap H_K_1 = G prop} and Proposition \ref{orthogonality as projection prop}.
\end{proof}

Finally, by Peter-Weyl Theorem, it is enough to consider the right (or left) regular representation of $G$:
\begin{proposition}
\label{reg. rep. condition of epsilon orthogonality prop}
Let $G$ be a finite group  with subgroups $K_{ \lbrace 0 \rbrace}$, $K_{ \lbrace 1 \rbrace}$ as above. Then for any unitary representation $(\pi, \mathcal{H})$, $\mathcal{H}^{\pi (K_{\lbrace 0 \rbrace})}$ and $\mathcal{H}^{\pi (K_{\lbrace 1 \rbrace})}$ are $\varepsilon$-orthogonal if and only if $\Vert \rho (k_{ \lbrace 1 \rbrace} k_{ \lbrace 0 \rbrace} - k_G) \Vert \leq \varepsilon$, where $\rho$ is the right regular representation of $G$.
\end{proposition}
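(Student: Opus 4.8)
The plan is to reduce the statement to Corollary~\ref{norm condition of epsilon orthogonality coro} and then invoke the Peter--Weyl theorem for the finite group $G$. Write $a = k_{\lbrace 1 \rbrace} k_{\lbrace 0 \rbrace} - k_G \in \mathbb{C}[G]$. By Corollary~\ref{norm condition of epsilon orthogonality coro}, for a unitary representation $(\pi, \mathcal{H})$ the subspaces $\mathcal{H}^{\pi (K_{\lbrace 0 \rbrace})}$ and $\mathcal{H}^{\pi (K_{\lbrace 1 \rbrace})}$ are $\varepsilon$-orthogonal if and only if $\Vert \pi(a) \Vert \leq \varepsilon$. Hence, by the very definition of $\varepsilon$-orthogonality of the subgroups $K_{\lbrace 0 \rbrace}, K_{\lbrace 1 \rbrace}$, the claim to be proved is equivalent to: $\Vert \pi(a) \Vert \leq \varepsilon$ holds for every unitary representation $\pi$ of $G$ if and only if $\Vert \rho(a) \Vert \leq \varepsilon$.

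One implication is immediate: since $\rho$ is itself a unitary representation of $G$, the ``only if'' direction follows by specializing to $\pi = \rho$. For the converse, I would first recall that, $G$ being finite, every unitary representation $(\pi, \mathcal{H})$ decomposes as an orthogonal direct sum $\mathcal{H} = \bigoplus_{\alpha} \mathcal{H}_\alpha$ of irreducible subrepresentations $\sigma_\alpha$ (each finite-dimensional). Each $\mathcal{H}_\alpha$ is $\pi(g)$-invariant for all $g \in G$, hence $\pi(a)$-invariant, so $\Vert \pi(a) \Vert = \sup_\alpha \Vert \sigma_\alpha(a) \Vert$. Consequently it suffices to show that $\Vert \sigma(a) \Vert \leq \Vert \rho(a) \Vert$ for every irreducible representation $\sigma$ of $G$; then $\Vert \rho(a) \Vert \leq \varepsilon$ forces $\Vert \sigma_\alpha(a) \Vert \leq \varepsilon$ for all $\alpha$, whence $\Vert \pi(a) \Vert \leq \varepsilon$ for every $\pi$.

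It is at this last point that Peter--Weyl is used. The regular representation of a finite group contains every irreducible representation $\sigma$ (with multiplicity $\dim \sigma$); concretely, the decomposition $\mathbb{C}[G] \cong \bigoplus_\sigma M_{\dim \sigma}(\mathbb{C})$ exhibits a $\rho(G)$-invariant subspace $\mathcal{K}_\sigma \subseteq \ell_2(G)$ on which $\rho$ is a direct sum of copies of $\sigma$, so that under a suitable unitary identification $\rho(a)|_{\mathcal{K}_\sigma}$ acts as $\sigma(a) \otimes I$. Since $\mathcal{K}_\sigma$ is $\rho(a)$-invariant, $\Vert \sigma(a) \Vert = \Vert \sigma(a) \otimes I \Vert = \Vert \rho(a)|_{\mathcal{K}_\sigma} \Vert \leq \Vert \rho(a) \Vert$, which is exactly what was needed. (The same works verbatim with the right regular representation, whose decomposition into irreducibles is identical.) The only mildly delicate bookkeeping is verifying that $\rho(a)$ really acts as $\sigma(a) \otimes I$ on the $\sigma$-isotypic component and that passing to an invariant subspace cannot increase the operator norm; both are entirely standard, so I do not expect a genuine obstacle here.
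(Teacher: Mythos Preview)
Your proposal is correct and follows essentially the same approach as the paper: reduce via Corollary~\ref{norm condition of epsilon orthogonality coro} to a statement about $\Vert \pi(a) \Vert$, then use Peter--Weyl to decompose an arbitrary unitary representation into irreducibles and to observe that every irreducible occurs inside the regular representation, so the regular representation controls all others. Your write-up is somewhat more explicit about the operator-norm bookkeeping (the supremum over isotypic components and the fact that restriction to an invariant subspace cannot increase the norm), but the underlying argument is the same.
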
 

\begin{proof}
First, by Peter-Weyl Theorem every unitary representation can be decomposed into an orthogonal sum of irreducible unitary representations and therefore it is enough to check the $\varepsilon$-orthogonality condition in each component of this orthogonal sum, i.e., to check $\varepsilon$-orthogonality only for the irreducible unitary representations.

Second, also by Peter-Weyl Theorem the right regular representation decomposes as an orthogonal sum that contains all the irreducible unitary representations and therefore if $\ell_2 (G)^{\pi (K_{\lbrace 0 \rbrace})}$ and $\ell_2 (G)^{\pi (K_{\lbrace 1 \rbrace})}$ are $\varepsilon$-orthogonal, then for every irreducible unitary representation $(\pi, \mathcal{H})$, $\mathcal{H}^{\pi (K_{\lbrace 0 \rbrace})}$ and $\mathcal{H}^{\pi (K_{\lbrace 1 \rbrace})}$ are $\varepsilon$-orthogonal. 

Combining these facts with Corollary \ref{norm condition of epsilon orthogonality coro} finishes the proof.
\end{proof}

\subsection{A bound on the spectrum of X via $\varepsilon$-orthogonality}
\label{The second largest eigenvalue of the random walk of X_G via varepsilon-orthogonality sect}

\begin{comment}
\begin{theorem}
Let $G$ be a finite group with a subgroup geometry system $(G, ( K_{\lbrace 0 \rbrace}, K_{\lbrace 1 \rbrace}))$. Also, let $\lambda$ be the second largest eigenvalue of the simple random walk on $X(G, ( K_{\lbrace 0 \rbrace}, K_{\lbrace 1 \rbrace}))$. For every $0 \leq \varepsilon$, $\lambda \leq \varepsilon$ if and only if $K_{ \lbrace 0 \rbrace}$ and $K_{ \lbrace 1 \rbrace}$ are $\varepsilon$-orthogonal.
\end{theorem}
\end{comment}

Let $G$ be a group and $(G, ( K_{\lbrace 0 \rbrace}, K_{\lbrace 1 \rbrace}))$ be a subgroup geometry system. For the reader's convenience, we repeat the construction of $X(G, ( K_{\lbrace 0 \rbrace}, K_{\lbrace 1 \rbrace}))$: the vertices of $X_G$ are $V = V_0 \cup V_1$ with
$$V_i = \lbrace g K_{\lbrace i \rbrace} : g \in G \rbrace, i=0,1,$$
(in particular, $\vert V_i \vert = \vert G/ K_{\lbrace i \rbrace} \vert$). The edges $E$ of $X_G$ are
$$E= \lbrace g K_{\lbrace 0,1 \rbrace} : g \in G \rbrace.$$
and $g K_{\lbrace 0,1 \rbrace} \in E$ is the edge connecting $g K_{\lbrace 0 \rbrace} \in V_0$ and $g K_{\lbrace 1 \rbrace} \in V_1$. 
%This means that $X_G$ is a connected biregular bipartite graph: the number of edges connected to each vertex in $V_i$ is $d_i = \vert K_{\lbrace i \rbrace} / K_{\lbrace 0,1 \rbrace} \vert$ for $i=0,1$.

\begin{proposition}
\label{incidence in cosets prop}
Let $G$ be a group and $(G, ( K_{\lbrace 0 \rbrace}, K_{\lbrace 1 \rbrace}))$ be a subgroup geometry system. Denote as above
$$V_i = \lbrace g K_{\lbrace i \rbrace} : g \in G \rbrace, i=0,1.$$

If $[K_{\lbrace 0 \rbrace} :K_{\lbrace 0,1 \rbrace}] < \infty , [K_{\lbrace 1 \rbrace} :K_{\lbrace 0,1 \rbrace}] < \infty$, then $X(G, ( K_{\lbrace 0 \rbrace}, K_{\lbrace 1 \rbrace}))$ is a connected locally finite biregular bipartite graph:  the number of edges connected to each vertex in $V_i$ is $d_i = [K_{\lbrace i \rbrace} :K_{\lbrace 0,1 \rbrace}]$ for $i=0,1$.
Moreover, for $i=0,1$, choose $h_1^i,...,h_{d_i}^i \in K_{\lbrace i \rbrace} / K_{\lbrace 0,1 \rbrace}$ such that $K_{\lbrace i \rbrace} = \bigcup_{j=1}^{d_i} h_j^i K_{\lbrace 0,1 \rbrace}$. Then:
\begin{enumerate}
\item For every $g K_{\lbrace 0 \rbrace} \in V_0$, $g K_{\lbrace 0 \rbrace}$ is connected by an edge to $g h_j^0 K_{\lbrace 1 \rbrace} \in V_1$ for $j=1,...,d_0$ and only to these vertices.
\item For every $g K_{\lbrace 1 \rbrace} \in V_1$, $g K_{\lbrace 1 \rbrace}$ is connected by an edge to $g h_j^1 K_{\lbrace 0 \rbrace} \in V_0$ for $j=1,...,d_1$ and only to these vertices.
\end{enumerate}
\end{proposition}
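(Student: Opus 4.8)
The plan is to unwind the definition of the coset complex directly in the rank-two case. Since $\I = \lbrace 0,1 \rbrace$ has cardinality two, the clique complex $X = X(G, (K_{\lbrace 0 \rbrace}, K_{\lbrace 1 \rbrace}))$ has dimension at most one, hence is nothing but the bipartite graph with sides $V_0, V_1$ and the stated edge set; bipartiteness is immediate from the requirement $i \neq j$ in the edge relation, and connectedness together with the absence of isolated vertices is already furnished by Theorem \ref{cost geom thm} (strongly gallery connected implies connected, pure $1$-dimensional means no isolated vertex), which applies since $(G, (K_{\lbrace 0 \rbrace}, K_{\lbrace 1 \rbrace}))$ is assumed to be a subgroup geometry system.

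First I would identify the neighbours of an arbitrary vertex $g K_{\lbrace 0 \rbrace} \in V_0$. By definition $g K_{\lbrace 0 \rbrace}$ and $g' K_{\lbrace 1 \rbrace}$ are joined by an edge precisely when $g K_{\lbrace 0 \rbrace} \cap g' K_{\lbrace 1 \rbrace} \neq \emptyset$, and this intersection is nonempty iff there are $k_0 \in K_{\lbrace 0 \rbrace}$, $k_1 \in K_{\lbrace 1 \rbrace}$ with $g k_0 = g' k_1$, i.e. iff $g' \in g K_{\lbrace 0 \rbrace} K_{\lbrace 1 \rbrace}$. Writing $g' = g k_0 k_1$ with $k_0 \in K_{\lbrace 0 \rbrace}$, $k_1 \in K_{\lbrace 1 \rbrace}$ gives $g' K_{\lbrace 1 \rbrace} = g k_0 K_{\lbrace 1 \rbrace}$, so the neighbourhood of $g K_{\lbrace 0 \rbrace}$ is exactly $\lbrace g k_0 K_{\lbrace 1 \rbrace} : k_0 \in K_{\lbrace 0 \rbrace} \rbrace$.

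Next I would count these neighbours. For $k_0, k_0' \in K_{\lbrace 0 \rbrace}$ one has $g k_0 K_{\lbrace 1 \rbrace} = g k_0' K_{\lbrace 1 \rbrace}$ iff $k_0^{-1} k_0' \in K_{\lbrace 1 \rbrace}$; since also $k_0^{-1} k_0' \in K_{\lbrace 0 \rbrace}$, this holds iff $k_0^{-1} k_0' \in K_{\lbrace 0 \rbrace} \cap K_{\lbrace 1 \rbrace} = K_{\lbrace 0,1 \rbrace}$. Hence $k_0 K_{\lbrace 0,1 \rbrace} \mapsto g k_0 K_{\lbrace 1 \rbrace}$ is a well-defined bijection from $K_{\lbrace 0 \rbrace} / K_{\lbrace 0,1 \rbrace}$ onto the neighbourhood of $g K_{\lbrace 0 \rbrace}$, so that $g K_{\lbrace 0 \rbrace}$ has degree $d_0 = [K_{\lbrace 0 \rbrace} : K_{\lbrace 0,1 \rbrace}]$, finite by hypothesis. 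Choosing representatives $h_1^0, \dots, h_{d_0}^0$ with $K_{\lbrace 0 \rbrace} = \bigcup_j h_j^0 K_{\lbrace 0,1 \rbrace}$ then exhibits the neighbours as precisely $g h_j^0 K_{\lbrace 1 \rbrace}$, $j = 1, \dots, d_0$, pairwise distinct by the bijectivity just established; this is item (1). Item (2) follows by the symmetric argument, interchanging $0$ and $1$ and using $g' K_{\lbrace 0 \rbrace} \cap g K_{\lbrace 1 \rbrace} \neq \emptyset \iff g' \in g K_{\lbrace 1 \rbrace} K_{\lbrace 0 \rbrace}$. Finiteness of $d_0$ and $d_1$ gives local finiteness, and the uniformity of degrees on each side gives biregularity.

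There is no real obstacle here: the whole argument is a translation of the incidence relation into coset language. The only point requiring a moment's care is the injectivity of $k_0 K_{\lbrace 0,1 \rbrace} \mapsto g k_0 K_{\lbrace 1 \rbrace}$ — that is, the elementary fact that for $k, k' \in K_{\lbrace 0 \rbrace}$ one has $k K_{\lbrace 1 \rbrace} = k' K_{\lbrace 1 \rbrace}$ exactly when $k^{-1} k' \in K_{\lbrace 0 \rbrace} \cap K_{\lbrace 1 \rbrace} = K_{\lbrace 0,1 \rbrace}$, which is where the identity $K_\tau = \bigcap_{i \in \tau} K_{\lbrace i \rbrace}$ built into the notation is doing the work.
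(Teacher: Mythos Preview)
Your proof is correct and follows essentially the same approach as the paper. The only cosmetic difference is that the paper works via the edge-as-coset description of Remark~\ref{different descrip remark} (enumerating the edges $g' K_{\lbrace 0,1 \rbrace}$ containing $g K_{\lbrace 0 \rbrace}$ and then reading off the other endpoint), whereas you work directly from the vertex-adjacency relation $g K_{\lbrace 0 \rbrace} \cap g' K_{\lbrace 1 \rbrace} \neq \emptyset$; both routes land on the same bijection $K_{\lbrace 0 \rbrace}/K_{\lbrace 0,1 \rbrace} \to \lbrace \text{neighbours of } g K_{\lbrace 0 \rbrace} \rbrace$.
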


\begin{proof}
Both claims are symmetric and therefore it is sufficient to prove only the first one. Let $g K_{\lbrace 0 \rbrace} \in V_0$. Recall that $g K_{\lbrace 0 \rbrace} \leq g' K_{\lbrace 0,1 \rbrace}$ if and only if $g^{-1} g' \in K_{\lbrace 0 \rbrace}$. Therefore $g K_{\lbrace 0 \rbrace} \leq g' K_{\lbrace 0,1 \rbrace}$ if and only if
$$g^{-1} g' \in \bigcup_{j=1}^{d_0} h_j^0 K_{\lbrace 0,1 \rbrace},$$
i.e., if and only if there is $h_{j_0}^0$ such that $g^{-1} g' \in h_{j_0}^0 K_{\lbrace 0,1 \rbrace} \Leftrightarrow g' \in g h_{j_0}^0 K_{\lbrace 0,1 \rbrace}$. Note that for every $g'' \in  K_{\lbrace 0,1 \rbrace}$ if $g' = g h_{j_0}^0 g''$, then $g' K_{\lbrace 0,1 \rbrace} =  g h_{j_0}^0 g'' K_{\lbrace 0,1 \rbrace} =  g h_{j_0}^0 K_{\lbrace 0,1 \rbrace}$, therefore we can always assume that $g' = g h_{j_0}^0$. To conclude, $g K_{\lbrace 0 \rbrace}$ is connected to (and only to) the edges $g h_{1}^0 K_{\lbrace 0,1 \rbrace},...,g h_{d_1}^0 K_{\lbrace 0,1 \rbrace}$ and therefore it is connected by an edge to the vertices $g h_j^0 K_{\lbrace 1 \rbrace} \in V_1$ for $j=1,...,d_0$ and only to these vertices.
\end{proof}

By Proposition \ref{connection - spec gap and M_0, M_1 norm prop}, the second largest eigenvalue of the simple random walk on $X(G, ( K_{\lbrace 0 \rbrace}, K_{\lbrace 1 \rbrace}))$ can be bounded by bounding the norm of $M_0 \vert_{\ell_2^0 (V_0)}$. We will show that $M_0 \vert_{\ell_2^0 (V_0)}$ can be bounded using the notion of $\varepsilon$-orthogonality between the groups $K_{\lbrace 0 \rbrace}$ and $K_{\lbrace 1 \rbrace}$ defined above.

\begin{theorem}
\label{Spectral gap as orthogonality thm}
Let $G$ be a finite group with subgroups $K_{ \lbrace 0 \rbrace}, K_{ \lbrace 1 \rbrace}$, such that $\langle K_{ \lbrace 0 \rbrace}, K_{ \lbrace 1 \rbrace} \rangle =G$. For every $0 \leq \varepsilon$, $\Vert M_0 \vert_{\ell_2^0 (V_0)} \Vert \leq \varepsilon$ if and only if $K_{ \lbrace 0 \rbrace}$ and $K_{ \lbrace 1 \rbrace}$ are $\varepsilon$-orthogonal.
\end{theorem}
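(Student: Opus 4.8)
The plan is to move the whole problem into $\ell_2(G)$ and recognize $M_0|_{\ell_2^0(V_0)}$ as the product of the two orthogonal projections $\rho(k_{\{0\}})$ and $\rho(k_{\{1\}})$, cut down by $\rho(k_G)$; once $\|M_0|_{\ell_2^0(V_0)}\| = \|\rho(k_{\{1\}}k_{\{0\}}-k_G)\|$ is established, the asserted equivalence is exactly Proposition~\ref{reg. rep. condition of epsilon orthogonality prop}. Throughout, $\rho$ denotes the right regular representation of $G$ and $\ell_2(G)$ carries the inner product $\langle f,g\rangle=\sum_{x\in G}f(x)\overline{g(x)}$, with respect to which $\rho$ is unitary; by Corollary~\ref{pi(k)'s are projection coro} the operators $A:=\rho(k_{\{1\}})$, $B:=\rho(k_{\{0\}})$, $C:=\rho(k_G)$ are the orthogonal projections onto $\ell_2(G)^{\rho(K_{\{1\}})}$, $\ell_2(G)^{\rho(K_{\{0\}})}$, $\ell_2(G)^{\rho(G)}$, and since $gk_G=k_Gg=k_G$ in $\mathbb{C}[G]$ for every $g$ (Proposition~\ref{alg property of k's prop}), one has $CA=AC=CB=BC=C$.

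First I would introduce the maps $J_i\colon\ell_2(V_i)\to\ell_2(G)$, $i=0,1$, given by $(J_i\phi)(x)=\phi(xK_{\{i\}})$, whose image is precisely the space of functions that are right $K_{\{i\}}$-invariant, i.e. $\im J_i=\im\rho(k_{\{i\}})$. A direct count using $d_i=[K_{\{i\}}:K_{\{0,1\}}]$ gives $\langle J_i\phi,J_i\psi\rangle_{\ell_2(G)}=|K_{\{0,1\}}|\,\langle\phi,\psi\rangle_{\ell_2(V_i)}$ for \emph{both} $i=0,1$ — the constant is the same on the two sides because the weighted inner product on $\ell_2(V_i)$ already carries the factor $d_i$ — so $J_0$ and $J_1$ are the same scalar multiple of an isometry, and therefore operator norms between the $\ell_2(V_i)$ and their images are preserved under the identification. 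Moreover $J_i$ carries $\ell_2^0(V_i)$ onto $W_i:=\im\rho(k_{\{i\}})\cap\ker\rho(k_G)$, since $\phi\in\ell_2^0(V_i)$ is equivalent to $\sum_{x\in G}(J_i\phi)(x)=0$, i.e.\ to $CJ_i\phi=0$.

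Next comes the key computation: $J_1M_0=\rho(k_{\{1\}})J_0$. Given $\phi\in\ell_2(V_0)$ and $x\in G$, I would expand $(\rho(k_{\{1\}})J_0\phi)(x)=\tfrac{1}{|K_{\{1\}}|}\sum_{h\in K_{\{1\}}}\phi(xhK_{\{0\}})$, choose coset representatives $h_1^1,\dots,h_{d_1}^1$ with $K_{\{1\}}=\bigsqcup_j h_j^1K_{\{0,1\}}$ as in Proposition~\ref{incidence in cosets prop}, and use $K_{\{0,1\}}\subseteq K_{\{0\}}$ to collapse each inner coset; the sum becomes $\tfrac{1}{d_1}\sum_{j=1}^{d_1}\phi(xh_j^1K_{\{0\}})$, which by the explicit edge description in Proposition~\ref{incidence in cosets prop} equals $(M_0\phi)(xK_{\{1\}})=(J_1M_0\phi)(x)$. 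Combined with the previous paragraph this yields $\|M_0|_{\ell_2^0(V_0)}\|=\|\rho(k_{\{1\}})|_{W_0}\|$.

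It then remains to identify $\|\rho(k_{\{1\}})|_{W_0}\|$ with $\|AB-C\|=\|\rho(k_{\{1\}}k_{\{0\}}-k_G)\|$. Using $CA=AC=CB=BC=C$ one checks that $A':=A-C$ and $B':=B-C$ are orthogonal projections, onto $W_1$ and $W_0$ respectively, and that $AB-C=A'B'$. For $f\in W_0$ we have $B'f=f$ and $Cf=0$, hence $A'B'f=A'f=Af=\rho(k_{\{1\}})f$; since also $A'B'g=A'(B'g)$ with $B'g\in W_0$ for every $g$, taking suprema gives $\|A'B'\|=\|\rho(k_{\{1\}})|_{W_0}\|$. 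Therefore $\|M_0|_{\ell_2^0(V_0)}\|=\|AB-C\|=\|\rho(k_{\{1\}}k_{\{0\}}-k_G)\|$, and the theorem follows at once from Proposition~\ref{reg. rep. condition of epsilon orthogonality prop}. The only genuinely delicate points are the coset bookkeeping in the identity $J_1M_0=\rho(k_{\{1\}})J_0$ (where the explicit neighbour description of Proposition~\ref{incidence in cosets prop} is essential) and checking that the two weighted $\ell_2(V_i)$-norms rescale by the \emph{same} constant, so that the identification is norm-faithful; everything else is routine projection algebra.
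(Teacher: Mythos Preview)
Your proof is correct and follows essentially the same route as the paper: embed $\ell_2^0(V_i)$ into $\ell_2(G)$ via the ``lift to cosets'' maps (your $J_i$ are the paper's $L_i$ up to the scalar $|K_{\{0,1\}}|$), verify the intertwining identity $J_1M_0=\rho(k_{\{1\}})J_0$ using the explicit neighbour description, and then reduce $\|\rho(k_{\{1\}})|_{W_0}\|$ to $\|\rho(k_{\{1\}}k_{\{0\}}-k_G)\|$ before invoking Proposition~\ref{reg. rep. condition of epsilon orthogonality prop}. The only cosmetic differences are that the paper normalizes $L_i$ to be an exact isometry (whereas you observe both $J_i$ rescale by the same constant) and carries out the last norm identity directly rather than via your projection algebra with $A'=A-C$, $B'=B-C$.
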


In order to prove this theorem, we will need the following lemma:
\begin{lemma}
\label{ell_2^0 (G) lemma}
Let $G$ be a finite group with subgroups $K_{ \lbrace 0 \rbrace}, K_{ \lbrace 1 \rbrace}$, such that $\langle K_{ \lbrace 0 \rbrace}, K_{ \lbrace 1 \rbrace} \rangle =G$. Define the following subspaces of $\ell_2 (G)$:
$$\ell_2^0 (G) = \lbrace \phi \in \ell_2 (G) : \sum_{g \in G} \phi (g) =0 \rbrace,$$
For $i=0,1$,
$$\ell_2 (G)_{\lbrace i \rbrace} = \lbrace \phi \in \ell_2 (G) : \forall g_1, g_2 \in G, g_1 K_{\lbrace i \rbrace} = g_2 K_{\lbrace i \rbrace} \Rightarrow \phi (g_1) = \phi (g_2) \rbrace,$$
$$\ell_2^0 (G)_{\lbrace i \rbrace} = \lbrace \phi \in \ell_2 (G)_{\lbrace i \rbrace} : \sum_{g \in G} \phi (g) =0 \rbrace.$$
Then:
\begin{enumerate}
\item $I-\rho (k_G)$ is the orthogonal projection on $\ell_2^0 (G)$.
\item For $i=0,1$, $\rho (k_{\lbrace i \rbrace})$ is the orthogonal projection on $\ell_2 (G)_{\lbrace i \rbrace}$.
\item For $i=0,1$, $\rho (k_{\lbrace i \rbrace}-k_G)$ is the orthogonal projection on $\ell_2^0 (G)_{\lbrace i \rbrace}$.
\end{enumerate}
\end{lemma}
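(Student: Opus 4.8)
The plan is to verify each of the three claims by the standard strategy for identifying an operator as an orthogonal projection: show it is idempotent and self-adjoint (so that it is \emph{some} orthogonal projection), and then identify its image with the claimed subspace by a double inclusion. The algebraic facts about $k_G$, $k_{\lbrace 0\rbrace}$, $k_{\lbrace 1\rbrace}$ that I need — namely $(k_{\lbrace i\rbrace})^2 = (k_{\lbrace i\rbrace})^* = k_{\lbrace i\rbrace}$, $(k_G)^2 = (k_G)^* = k_G$, and $g k_{\lbrace i\rbrace} = k_{\lbrace i\rbrace}$ for $g \in K_{\lbrace i\rbrace}$, $g k_G = k_G$ for $g \in G$ — are already established in Proposition \ref{alg property of k's prop}, and I will also use that $k_G k_{\lbrace i\rbrace} = k_{\lbrace i\rbrace} k_G = k_G$ (which follows by the same averaging argument: $k_G k_{\lbrace i\rbrace} = \frac{1}{|K_{\lbrace i\rbrace}|}\sum_{g\in K_{\lbrace i\rbrace}} k_G g = \frac{1}{|K_{\lbrace i\rbrace}|}\sum_{g\in K_{\lbrace i\rbrace}} k_G = k_G$). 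Throughout, $\rho$ denotes the right regular representation on $\ell_2(G)$, which is unitary, so $\rho$ is a $*$-homomorphism and $\rho(a)^* = \rho(a^*)$.

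For part (1): $I - \rho(k_G)$ is self-adjoint since $k_G^* = k_G$, and idempotent since $(I-\rho(k_G))^2 = I - 2\rho(k_G) + \rho(k_G)^2 = I - \rho(k_G)$ using $k_G^2 = k_G$. So it is an orthogonal projection; it remains to identify its kernel/image. I observe that $\rho(k_G)\phi$ is the constant function with value $\frac{1}{|G|}\sum_{g\in G}\phi(g)$ — this is a direct computation from the definition of $k_G$ and of $\rho$. Hence $\phi \in \ker\rho(k_G)$ iff $\sum_{g}\phi(g) = 0$, i.e. $\ker \rho(k_G) = \ell_2^0(G)$, which is exactly the image of $I - \rho(k_G)$. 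For part (2): $\rho(k_{\lbrace i\rbrace})$ is self-adjoint and idempotent by Proposition \ref{alg property of k's prop}, hence an orthogonal projection. To compute its image, note that $(\rho(k_{\lbrace i\rbrace})\phi)(g) = \frac{1}{|K_{\lbrace i\rbrace}|}\sum_{h\in K_{\lbrace i\rbrace}} \phi(gh)$, which manifestly depends only on the coset $gK_{\lbrace i\rbrace}$, so $\im \rho(k_{\lbrace i\rbrace}) \subseteq \ell_2(G)_{\lbrace i\rbrace}$; conversely if $\phi \in \ell_2(G)_{\lbrace i\rbrace}$ then $\phi(gh) = \phi(g)$ for all $h \in K_{\lbrace i\rbrace}$, so the average returns $\phi(g)$ and $\rho(k_{\lbrace i\rbrace})\phi = \phi$, giving the reverse inclusion.

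For part (3): set $P_i = \rho(k_{\lbrace i\rbrace} - k_G) = \rho(k_{\lbrace i\rbrace}) - \rho(k_G)$. Using $k_{\lbrace i\rbrace}k_G = k_G k_{\lbrace i\rbrace} = k_G$ and $k_G^2 = k_G$, I compute $(k_{\lbrace i\rbrace} - k_G)^2 = k_{\lbrace i\rbrace}^2 - k_{\lbrace i\rbrace}k_G - k_G k_{\lbrace i\rbrace} + k_G^2 = k_{\lbrace i\rbrace} - k_G$, and $(k_{\lbrace i\rbrace} - k_G)^* = k_{\lbrace i\rbrace} - k_G$; hence $P_i$ is an orthogonal projection. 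Moreover $\rho(k_{\lbrace i\rbrace})\rho(k_G) = \rho(k_G) = \rho(k_G)\rho(k_{\lbrace i\rbrace})$, which means $\rho(k_G)$ projects onto a subspace of $\im\rho(k_{\lbrace i\rbrace}) = \ell_2(G)_{\lbrace i\rbrace}$ (constants lie in every $\ell_2(G)_{\lbrace i\rbrace}$), so $P_i = \rho(k_{\lbrace i\rbrace}) - \rho(k_G)$ is the projection onto the orthogonal complement of $\im\rho(k_G)$ inside $\ell_2(G)_{\lbrace i\rbrace}$. Since $\im\rho(k_G)$ is the constants and the orthogonal complement of the constants in $\ell_2(G)$ is $\ell_2^0(G)$, the image of $P_i$ is $\ell_2(G)_{\lbrace i\rbrace} \cap \ell_2^0(G) = \ell_2^0(G)_{\lbrace i\rbrace}$, as claimed.

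I do not anticipate a serious obstacle here: the lemma is essentially bookkeeping with the averaging idempotents, and the only mild care needed is to confirm the absorption identities $k_G k_{\lbrace i\rbrace} = k_{\lbrace i\rbrace} k_G = k_G$ (not stated verbatim in Proposition \ref{alg property of k's prop} but proved by the identical one-line averaging argument) and to make sure the inner product on $\ell_2(G)$ is the unweighted one, so that "orthogonal complement of the constants" is literally $\ell_2^0(G)$. The one place to be slightly attentive is part (3), where I must justify that subtracting two projections $\rho(k_{\lbrace i\rbrace})$ and $\rho(k_G)$ with $\rho(k_G)\rho(k_{\lbrace i\rbrace}) = \rho(k_G)$ again yields an orthogonal projection — this is the standard fact that if $P, Q$ are orthogonal projections with $QP = Q$ (equivalently $\im Q \subseteq \im P$) then $P - Q$ is the orthogonal projection onto $\im P \cap (\im Q)^\perp$, which I verified directly above via idempotency and self-adjointness.
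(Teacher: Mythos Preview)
Your proposal is correct and follows essentially the same approach as the paper: compute that $\rho(k_G)$ averages to a constant so $I-\rho(k_G)$ projects onto $\ell_2^0(G)$, verify by direct averaging that $\rho(k_{\lbrace i\rbrace})$ projects onto the right-coset-constant functions, and then handle part (3) by combining these with the absorption identity $k_{\lbrace i\rbrace}k_G=k_G$. The only cosmetic difference is that the paper phrases part (3) as a product of commuting projections, $\rho(k_{\lbrace i\rbrace}-k_G)=\rho(k_{\lbrace i\rbrace})(I-\rho(k_G))=(I-\rho(k_G))\rho(k_{\lbrace i\rbrace})$, rather than expanding $(k_{\lbrace i\rbrace}-k_G)^2$ directly, but this is the same computation.
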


\begin{proof}
For the first assertion, we note that for every $\phi \in \ell_2 (G)$ and every $g_0 \in G$,
\begin{dmath*}
(\rho (k_G) \phi) (g_0) = \dfrac{1}{\vert G \vert} \sum_{g \in G} \phi (g_0 g) = \dfrac{1}{\vert G \vert} \sum_{g \in G} \phi (g).
\end{dmath*}
Therefore $\rho (k_G)$ is a projection on the space of constant functions and $I-\rho(k_G)$ is a projection on the space orthogonal to the space of constant functions, i.e., on $\ell_2^0 (G)$.

We will prove the second and third assertions for the case $i=0$ (the proof in the case $i=1$ is similar). For every $g_1, g_2 \in G$ such that $g_1 K_{\lbrace 0 \rbrace} = g_2 K_{\lbrace 0 \rbrace}$, we have that $g_2^{-1} g_1 \in K_{\lbrace 0 \rbrace}$ which implies that $g_2^{-1} g_1 K_{\lbrace 0 \rbrace} = K_{\lbrace 0 \rbrace}$. Therefore for every $\phi \in \ell_2 (G)$ we have that
\begin{dmath*}
(\rho (k_{\lbrace 0 \rbrace}) \phi) (g_1) =
\dfrac{1}{\vert K_{\lbrace 0 \rbrace} \vert} \sum_{g \in K_{\lbrace 0 \rbrace}} \phi (g_1 g) =
\dfrac{1}{\vert K_{\lbrace 0 \rbrace} \vert} \sum_{g \in g_2^{-1} g_1 K_{\lbrace 0 \rbrace}} \phi (g_1 g) =
\dfrac{1}{\vert K_{\lbrace 0 \rbrace} \vert} \sum_{g \in K_{\lbrace 0 \rbrace}} \phi (g_1 g_1^{-1} g_2 g) =
\dfrac{1}{\vert K_{\lbrace 0 \rbrace} \vert} \sum_{g \in K_{\lbrace 0 \rbrace}} \phi (g_2 g) =
(\rho (k_{\lbrace 0 \rbrace}) \phi) (g_2),
\end{dmath*}
thus, $\im (\rho (k_{\lbrace 0 \rbrace})) \subseteq \ell_2 (G)_{\lbrace 0 \rbrace}$. In the other direction, let $\phi \in \ell_2 (G)_{\lbrace 0 \rbrace}$, and note that for every $g \in K_{\lbrace 0 \rbrace}$ and every $g' \in G$, we have that $g' g  K_{\lbrace 0 \rbrace} =  g' K_{\lbrace 0 \rbrace}$ and thus $\phi (g') = \phi (g' g)$. It follows that for every $g' \in G$, 
$$(\rho (k_{\lbrace 0 \rbrace}) \phi) (g') = \dfrac{1}{\vert K_{\lbrace 0 \rbrace} \vert} \sum_{g \in K_{\lbrace 0 \rbrace}} \phi (g' g) = \dfrac{1}{\vert K_{\lbrace 0 \rbrace} \vert} \sum_{g \in K_{\lbrace 0 \rbrace}} \phi (g') = \phi (g'),$$
i.e., $\rho (k_{\lbrace 0 \rbrace}) \phi = \phi$ and thus $\im (\rho (k_{\lbrace 0 \rbrace})) \supseteq \ell_2 (G)_{\lbrace 0 \rbrace}$.

We note that $\rho (k_{\lbrace i \rbrace}-k_G) = \rho ((k_{\lbrace i \rbrace}) (I- \rho (k_G)) = (I- \rho (k_G)) \rho (k_{\lbrace i \rbrace})$. This means that $\rho (k_{\lbrace i \rbrace}-k_G)$ is the product of commuting two projections: $\rho (k_{\lbrace i \rbrace})$ and $I- \rho (k_G)$. Recall that for two commuting projections, the image of the product is the intersection of the images and thus in our case $\im (\rho (k_{\lbrace i \rbrace}-k_G)) = \ell_2 (G)_{\lbrace i \rbrace} \cap \ell_2^0 (G) = \ell_2^0 (G)_{\lbrace i \rbrace}$ as needed.
\end{proof}

After this lemma, we turn to prove Theorem \ref{Spectral gap as orthogonality thm}:
\begin{proof}[Proof of Theorem \ref{Spectral gap as orthogonality thm}]
By Proposition \ref{reg. rep. condition of epsilon orthogonality prop}, in order to prove the theorem it is enough to prove that for the right regular representation $\rho$, $\Vert M_0 \vert_{\ell_2^0 (V_0)} \Vert = \Vert \rho (k_{\lbrace 1 \rbrace} k_{\lbrace 0 \rbrace} - k_G) \Vert$.

\textbf{Step 1:} Using the notations of Lemma \ref{ell_2^0 (G) lemma}, we define maps
$$L_i : \ell_2^0 (V_i) \rightarrow \ell_2^0 (G)_{\lbrace i \rbrace},$$
as follows: we recall that $V_i = \lbrace g K_{\lbrace i \rbrace} : g \in G \rbrace$ and therefore, for every $\phi \in \ell_2^0 (V_i)$ we define for every $g \in G$
$$(L_i \phi) (g) = \frac{1}{\vert K_{\lbrace 0,1 \rbrace} \vert}\phi (g K_{\lbrace i \rbrace}).$$
We claim that $L_i$ is an isometric isomorphism of $\ell_2^0 (V_i)$ and $\ell_2^0 (G)_{\lbrace i \rbrace}$. Indeed, $L_i$ is invertible and its inverse is
$$(L_i^{-1} \phi) (g K_{\lbrace i \rbrace}) = \vert K_{\lbrace 0,1 \rbrace} \vert \phi (g), \forall \phi \in \ell_2^0 (G)_{\lbrace i \rbrace},$$
(this is well defined because of the definition of $\ell_2^0 (G)_{\lbrace i \rbrace}$). To see that $L_i$ is an isometry, we note that for every $\phi \in \ell_2^0 (V_i)$,
\begin{dmath*}
\Vert L_i \phi \Vert_{\ell_2 (G)} =
\frac{1}{\vert K_{\lbrace 0,1 \rbrace} \vert} \sum_{g \in G} \vert L_i \phi (g) \vert^2 = \frac{1}{\vert K_{\lbrace 0,1 \rbrace} \vert}  \sum_{g \in G / K_{\lbrace i \rbrace} } \vert K_{\lbrace i \rbrace} \vert  \vert \phi (g K_{\lbrace i \rbrace}) \vert^2 = \\
 \sum_{g \in G / K_{\lbrace i \rbrace} } \dfrac{\vert K_{\lbrace i \rbrace} \vert}{\vert K_{\lbrace 0,1 \rbrace} \vert}  \vert \phi (g K_{\lbrace i \rbrace}) \vert^2 =
 \sum_{g \in G / K_{\lbrace i \rbrace} } d_i  \vert \phi (g K_{\lbrace i \rbrace}) \vert^2 =
\Vert \phi \Vert^2_{\ell_2 (V_{i})}.
\end{dmath*}

\textbf{Step 2:} We will prove that $\Vert \rho (k_{\lbrace 1 \rbrace} k_{\lbrace 0 \rbrace} - k_G) \Vert = \Vert \rho (k_{\lbrace 1 \rbrace}) \vert_{\ell_2^0 (G)_{\lbrace 0 \rbrace}} \Vert$.

In the case where $\rho (k_{\lbrace 0 \rbrace} - k_G) x =0$ for every $x \in \ell_2 (G)$, we have that for every $x$, $\rho (k_{\lbrace 1 \rbrace} k_{\lbrace 0 \rbrace} - k_G) x =0$ and therefore $\Vert \rho (k_{\lbrace 1 \rbrace} k_{\lbrace 0 \rbrace} - k_G) \Vert =0$. In this case, $\ell_2^0 (G)_{\lbrace 0 \rbrace} = \lbrace 0 \rbrace$ and therefore $\Vert \rho (k_{\lbrace 1 \rbrace}) \vert_{\ell_2^0 (G)_{\lbrace 0 \rbrace}} \Vert =0$ and the equality is proved.

Assume now that there is $x \in \ell_2 (G)$ such that $\rho (k_{\lbrace 0 \rbrace} - k_G) x  \neq 0$. For every such $x$, recall that $\Vert \rho (k_{\lbrace 0 \rbrace} - k_G) x \Vert \leq \Vert \rho (k_{\lbrace 0 \rbrace} - k_G) \Vert \Vert x \Vert \leq \Vert x \Vert$. Therefore
\begin{dmath*}
\dfrac{\Vert \rho (k_{\lbrace 1 \rbrace} k_{\lbrace 0 \rbrace} - k_G) x \Vert }{\Vert x \Vert} \leq  \dfrac{\Vert \rho (k_{\lbrace 1 \rbrace}) \rho (k_{\lbrace 0 \rbrace} - k_G) x \Vert }{\Vert \rho (k_{\lbrace 0 \rbrace} - k_G) x \Vert} \leq \\
\sup_{y \in \ell_2^0 (G)_{\lbrace 0 \rbrace}, y \neq 0} \dfrac{\Vert \rho (k_{\lbrace 1 \rbrace}) y \Vert}{\Vert y \Vert} =
 \Vert \rho (k_{\lbrace 1 \rbrace}) \vert_{\ell_2^0 (G)_{\lbrace 0 \rbrace}} \Vert,
\end{dmath*}
where the second inequality is due to Lemma \ref{ell_2^0 (G) lemma} in which we proved that $\rho (k_{\lbrace 0 \rbrace} - k_G)$ is the orthogonal projection on $\ell_2^0 (G)_{\lbrace 0 \rbrace}$.
This yields that
\begin{dmath*}
\Vert \rho (k_{\lbrace 1 \rbrace} k_{\lbrace 0 \rbrace} - k_G) \Vert = \\
\sup_{x \in \ell_2 (G), \rho (k_{\lbrace 0 \rbrace} - k_G) x  \neq 0} \dfrac{\Vert \rho (k_{\lbrace 1 \rbrace} k_{\lbrace 0 \rbrace} - k_G) x \Vert }{\Vert x \Vert} \leq \Vert \rho (k_{\lbrace 1 \rbrace}) \vert_{\ell_2^0 (G)_{\lbrace 0 \rbrace}} \Vert.
\end{dmath*}
In the other direction, using Lemma \ref{ell_2^0 (G) lemma} again, we get that for every $x \in \ell_2^0 (G)_{\lbrace 0 \rbrace}$,
\begin{dmath*}
\rho (k_{\lbrace 1 \rbrace} k_{\lbrace 0 \rbrace} - k_G) x = \rho (k_{\lbrace 1 \rbrace}) \rho ( k_{\lbrace 0 \rbrace} - k_G) x
=(\rho (k_{\lbrace 1 \rbrace} ) \vert_{\ell_2^0 (G)_{\lbrace 0 \rbrace}}) x,
\end{dmath*}
and therefore $\Vert \rho (k_{\lbrace 1 \rbrace} k_{\lbrace 0 \rbrace} - k_G) \Vert \geq \Vert \rho (k_{\lbrace 1 \rbrace}) \vert_{\ell_2^0 (G)_{\lbrace 0 \rbrace}} \Vert$ as needed.

\textbf{Step 3:} We will prove that $M_0 \vert_{\ell_2^0 (V_0)} = L_1^{-1} (\rho (k_{\lbrace 1 \rbrace}) \vert_{\ell_2^0 (G)_{\lbrace 0 \rbrace}} ) L_0$. Proving this equality will finish the proof because the $L_0, L_1$ are isometric isomorphisms and therefore this equality implies that $\Vert M_0 \vert_{\ell_2^0 (V_0)} \Vert = \Vert \rho (k_{\lbrace 1 \rbrace}) \vert_{\ell_2^0 (G)_{\lbrace 0 \rbrace}} \Vert$ and this in turn implies that $\Vert M_0 \vert_{\ell_2^0 (V_0)} \Vert =  \Vert \rho (k_{\lbrace 1 \rbrace} k_{\lbrace 0 \rbrace} - k_G) \Vert$ due to the previous step.

In the following computations, in order to ease the reading, we will drop the restriction notation and write $\rho (k_{\lbrace 1 \rbrace})$ instead of $\rho (k_{\lbrace 1 \rbrace}) \vert_{\ell_2^0 (G)_{\lbrace 0 \rbrace}}$.

We note that for every $\phi \in \ell_2^0 (G)_{\lbrace 0 \rbrace}$ and every $g \in G$, $g' \in K_{\lbrace 0,1 \rbrace}$, we have that $\phi (g g') = \phi (g)$ (since $g^{-1} g g' = g' \in K_{\lbrace 0,1 \rbrace} \subseteq K_{\lbrace 0 \rbrace}$). Fix $h_1^1,...,h_{d_1}^1 \in K_{\lbrace 1 \rbrace} / K_{\lbrace 0,1 \rbrace}$ such that $K_{\lbrace 1 \rbrace} = \bigcup_{i=1}^{d_1} h_i^1 K_{\lbrace 0,1 \rbrace}$. Then for every $\phi \in \ell_2^0 (V_0)$ and every $g_0 \in G$,
\begin{dmath*}
(\rho (k_{\lbrace 1 \rbrace}) L_0 \phi) (g_0) = \dfrac{1}{\vert K_{\lbrace 1 \rbrace} \vert} \sum_{g \in K_{\lbrace 1 \rbrace}} (L_0 \phi) (g_0 g) = \dfrac{1}{\vert K_{\lbrace 1 \rbrace} \vert} \sum_{i=1}^{d_1} \sum_{g \in K_{\lbrace 0,1 \rbrace}} (L_0 \phi) (g_0 h_i^1 g) =
\dfrac{1}{\vert K_{\lbrace 1 \rbrace} \vert} \sum_{i=1}^{d_1} \vert K_{\lbrace 0,1 \rbrace} \vert (L_0 \phi) (g_0 h_i^1) =
\dfrac{\vert K_{\lbrace 0,1 \rbrace} \vert}{\vert K_{\lbrace 1 \rbrace} \vert} \sum_{i=1}^{d_1}  (L_0 \phi) (g_0 h_i^1) =
\dfrac{1}{d_1} \sum_{i=1}^{d_1} (L_0 \phi) (g_0 h_i^1) =
\frac{1}{\vert K_{\lbrace 0,1 \rbrace} \vert} \dfrac{1}{d_1}  \sum_{i=1}^{d_1} \phi (g_0 h_i^1 K_{\lbrace 0 \rbrace} ).
\end{dmath*}
Therefore for every $g_0 K_{\lbrace 1 \rbrace} \in V_1$ and every $\phi \in \ell_2^0 (V_0)$,
\begin{dmath*}
(L_1^{-1} \rho (k_{\lbrace 1 \rbrace}) L_0 \phi) (g_0 K_{\lbrace 1 \rbrace}) = \vert K_{\lbrace 0,1 \rbrace} \vert (\rho (k_{\lbrace 1 \rbrace}) L_0 \phi) (g_0) =
\dfrac{1}{d_1}  \sum_{i=1}^{d_1} \phi (g_0 h_i^1 K_{\lbrace 0 \rbrace} ).
\end{dmath*}
By Proposition \ref{incidence in cosets prop}, we have that $g_0 K_{\lbrace 1 \rbrace}$ is connected by an edge to $g_0 h_i^1 K_{\lbrace 0 \rbrace}$, $i=1,...,d_1$ and therefore our computation yields that $L_1^{-1} \rho (k_{\lbrace 1 \rbrace}) L_0$ is exactly the averaging operator $M_0$ on $\ell_2^0 (V_0)$, i.e., that for every $\phi \in \ell_2^0 (V_0)$,
$$(M_0 \phi) (g_0 K_{\lbrace 1 \rbrace}) = ((L_1^{-1} \rho (k_{\lbrace 1 \rbrace}) L_0) \phi) (g_0 K_{\lbrace 1 \rbrace}),$$
as needed.
\end{proof}

As a corollary, we get a proof of Theorem \ref{orthogonality-spec thm}:
\begin{corollary}
\label{Spectral gap as orthogonality coro}
Let $G$ be a finite group with subgroups $K_{ \lbrace 0 \rbrace}, K_{ \lbrace 1 \rbrace}$, such that $\langle K_{ \lbrace 0 \rbrace}, K_{ \lbrace 1 \rbrace} \rangle =G$. For every $0 \leq \varepsilon$, the second largest eigenvalue $\lambda$ of $X(G, ( K_{\lbrace 0 \rbrace}, K_{\lbrace 1 \rbrace}))$ fulfills $\lambda \leq \varepsilon$ if and only if $K_{ \lbrace 0 \rbrace}$ and $K_{ \lbrace 1 \rbrace}$ are $\varepsilon$-orthogonal.
\end{corollary}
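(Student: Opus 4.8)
The plan is that the corollary is an assembly of three facts already in hand: Theorem~\ref{Spectral gap as orthogonality thm}, Proposition~\ref{connection - spec gap and M_0, M_1 norm prop}, and Proposition~\ref{incidence in cosets prop}. So the only thing to do is to run the chain of equivalences and to dispose of a degenerate case that Proposition~\ref{connection - spec gap and M_0, M_1 norm prop} explicitly excludes.

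First I would observe that $X=X(G,(K_{\lbrace 0 \rbrace},K_{\lbrace 1 \rbrace}))$ is a finite, connected, biregular, bipartite graph. Finiteness is immediate since $G$ is finite, and in particular the indices $[K_{\lbrace 0 \rbrace}:K_{\lbrace 0,1 \rbrace}]$ and $[K_{\lbrace 1 \rbrace}:K_{\lbrace 0,1 \rbrace}]$ are finite, so Proposition~\ref{incidence in cosets prop} applies and exhibits $X$ as a connected biregular bipartite graph with side degrees $d_i=[K_{\lbrace i \rbrace}:K_{\lbrace 0,1 \rbrace}]$; connectedness is in any case forced by the standing hypothesis $\langle K_{\lbrace 0 \rbrace},K_{\lbrace 1 \rbrace}\rangle=G$, since writing $g\in G$ as a word in $K_{\lbrace 0 \rbrace}\cup K_{\lbrace 1 \rbrace}$ and following the corresponding edges produces a path from $eK_{\lbrace 0 \rbrace}$ to $gK_{\lbrace 0 \rbrace}$.

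Next I would treat the case in which $X$ has at most two vertices, which happens exactly when $K_{\lbrace 0 \rbrace}=K_{\lbrace 1 \rbrace}=G$, so that $X$ is a single edge. Then its random walk has spectrum $\{1,-1\}$, whence the second largest eigenvalue equals $-1\le\varepsilon$; and for every unitary representation $(\pi,\mathcal H)$ we have $\mathcal H^{\pi(K_{\lbrace 0 \rbrace})}=\mathcal H^{\pi(K_{\lbrace 1 \rbrace})}=\mathcal H^{\pi(G)}$, so that $\mathcal H^{\pi(K_{\lbrace i \rbrace})}\cap(\mathcal H^{\pi(K_{\lbrace 0 \rbrace})}\cap\mathcal H^{\pi(K_{\lbrace 1 \rbrace})})^{\perp}=\{0\}$ for $i=0,1$ and $K_{\lbrace 0 \rbrace},K_{\lbrace 1 \rbrace}$ are $0$-orthogonal, hence $\varepsilon$-orthogonal for every $\varepsilon\ge 0$. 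Both sides of the claimed equivalence thus hold and there is nothing to prove. Otherwise $X$ has more than two vertices and Proposition~\ref{connection - spec gap and M_0, M_1 norm prop} gives $\lambda=\Vert M_0\vert_{\ell_2^0(V_0)}\Vert$.

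With this identity in place, Theorem~\ref{Spectral gap as orthogonality thm} says precisely that $\Vert M_0\vert_{\ell_2^0(V_0)}\Vert\le\varepsilon$ if and only if $K_{\lbrace 0 \rbrace}$ and $K_{\lbrace 1 \rbrace}$ are $\varepsilon$-orthogonal, and combining the two statements gives the corollary. I do not expect a genuine obstacle: the real content is already carried by Theorem~\ref{Spectral gap as orthogonality thm} (its Steps~1--3, via the isometries $L_0,L_1$ and the identification $M_0\vert_{\ell_2^0(V_0)}=L_1^{-1}\,\rho(k_{\lbrace 1 \rbrace})\vert_{\ell_2^0(G)_{\lbrace 0 \rbrace}}\,L_0$) together with Propositions~\ref{connection - spec gap and M_0, M_1 norm prop} and~\ref{incidence in cosets prop}; the one point that needs a little care is the small-graph case, which those propositions set aside.
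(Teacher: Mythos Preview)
Your proof is correct and follows exactly the paper's approach: the paper's proof is the one line ``Combine Theorem~\ref{Spectral gap as orthogonality thm} with Proposition~\ref{connection - spec gap and M_0, M_1 norm prop}.'' You carry out this combination explicitly and, in addition, supply the degenerate case $\vert V\vert\le 2$ that Proposition~\ref{connection - spec gap and M_0, M_1 norm prop} excludes; since the corollary as stated does not assume the subgroup geometry axioms (in particular not $(\mathcal{A}3)$), this extra care is warranted and your treatment of it is fine.
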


\begin{proof}
Combine Theorem \ref{Spectral gap as orthogonality thm} with Proposition \ref{connection - spec gap and M_0, M_1 norm prop}.
\end{proof}

\begin{remark}
Theorem \ref{orthogonality-spec thm} can be generalized to the case where $G$ is a compact group and $K_{\lbrace 0 \rbrace}, K_{\lbrace 1 \rbrace} < G$ are of finite index (and generate $G$). In that case, we define 
$$k_{\lbrace i \rbrace} = \frac{\mathbbm{1}_{K_{\lbrace i \rbrace}}}{\mu (K_{\lbrace i \rbrace})} \text{ for } i=0,1 \text{ and } k_{G} = \frac{\mathbbm{1}_{G}}{\mu (G)},$$
where $\mu$ is the Haar measure of $G$. For every unitary representation $(\pi, \mathcal{H})$, the operators $\pi (k_{\lbrace i \rbrace}), \pi (k_{G})$ are defined via the Bochner integral, e.g., for $x \in \mathcal{H}$,
$$\pi (k_{\lbrace 0 \rbrace}). x = \frac{1}{\mu (K_{\lbrace 0 \rbrace})} \int_{K_{\lbrace 0 \rbrace}}  (\pi (g).x) d \mu (g).$$
With these definitions, all the steps in the above proof of Theorem \ref{orthogonality-spec thm} can be repeated virtually verbatim.
\end{remark}

\end{document}